\newtheorem{theo}{Theorem}[section]
\newtheorem{lm}{Lemma}[section]
\newtheorem{cor}{Corollary}[section]
\newtheorem{rmk}{Remark}[section]
\newtheorem{proposition}{Proposition}[section]
\numberwithin{equation}{section}
\def\R{{\mathbb R}}
\def\Z{{\mathbb Z}}
\def\N{{\mathbb N}}
\def\SS{{\mathcal S}}
\def\Exp{{\mathbb E}}
\def\Pr{{\mathbb P}}
\def\1{{\mathbf 1}}
\def\eps{\varepsilon}
\def\0{{\mathbf 0}}
\def\SS{{\cal S}}
\def\CC{{\cal C}}
\newcommand{\ud}{{\mathrm d}}
\newcommand{\F}{{\mathcal F}}
\newcommand{\G}{{\mathcal G}}
\newcommand{\as}{\ \textrm{a.s.}}
\title{Non-homogeneous random walks with non-integrable increments and heavy-tailed
random walks on strips}
\author{Ostap Hryniv\footnote{Department of Mathematical Sciences, University of Durham, South Road, Durham DH1 3LE, UK.} \and Iain M. MacPhee\footnotemark[1]~\footnote{Iain MacPhee passed away on 13th January 2012, while this paper was under submission.} \and Mikhail V. Menshikov\footnotemark[1]
\and Andrew R. Wade\footnote{Department of Mathematics and Statistics, University of Strathclyde, 26 Richmond Street, Glasgow G1 1XH, UK.}}
\begin{document}

\maketitle

\begin{abstract}
We study asymptotic properties of spatially non-homogeneous random walks with non-integrable increments,  including transience, almost-sure bounds, and existence and non-existence
of moments for first-passage   and last-exit times.
In our proofs we also make use of estimates for hitting probabilities and large deviations bounds.
Our results are  more general than existing results in the literature,
which consider only the case of sums of independent (typically, identically distributed)
random variables.
We do not   assume the Markov property. Existing results that we generalize include a circle of ideas related to the Marcinkiewicz--Zygmund strong law
of large numbers, as well as more recent work of Kesten and Maller.
 Our proofs are robust and
use martingale methods. We demonstrate the benefit of the generality of our results by  applications to some non-classical models, including
 random walks with heavy-tailed
increments on two-dimensional strips, which include, for instance, certain generalized risk processes.
\end{abstract}

\smallskip
\noindent
{\em Keywords:} Heavy-tailed random walks; non-homogeneous random walks;
 transience; rate of escape; passage times; last exit times; semimartingales; random walks on strips; random walks with internal degrees of freedom; risk process. \/

\noindent
{\em AMS 2010 Subject Classifications:} 60G07, 60J05 (Primary) 60F15, 60G17,  60G50, 91B30 (Secondary)

\section{Introduction}
\label{sec:intro}

There is an extensive and rich theory of sums of independent, identically distributed
(i.i.d.) random variables (classical
`random walks'): see for instance
the books of
 Kallenberg \cite[Chapter 9]{kall},
 Lo\`eve \cite[\S 26.2]{loeve1}, or Stout   \cite[\S 3.2]{stout}.
When the summands are integrable, the (first-order) asymptotic
behaviour is governed by the mean. Completely different   phenomena occur when the mean
does not exist: see   classical references such as \cite{bk,feller1,eric1}
or more recent work such as \cite{hn,kruglov,dfk}.
In this paper we study an extension of this problem to general stochastic processes
with non-integrable increments  to include, for example, spatially non-homogeneous random walks.

Let $(X_t)_{t \in \Z^+}$ be a stochastic process on $\R$ adapted to
the filtration $(\F_t)_{t \in \Z^+}$.  (Throughout the paper we set
$\Z^+:= \{ 0,1,2,\ldots\}$ and $\N := \{1,2,\ldots\}$.)
We will be concerned with the asymptotic behaviour of $X_t$
given `heavy-tailed' conditions on its increments.
As we present our general results, it is helpful to keep in mind the classical independent-increments case,
where $X_t = S_t$ given by $S_0 := 0$ and, for $t \in \N$, $S_t := \sum_{s=1}^t \zeta_s$ for a sequence of independent (often, i.i.d.) $\R$-valued random variables
$\zeta_1,\zeta_2,\ldots$.
Thus we start with a brief summary of some known results in that setting.
Many of the results that we discuss for random walks have analogues for
suitable L\'evy processes: see e.g.\ the book of Sato \cite{sato}, particularly
Sections 37 and 48.

A classical result of Kesten \cite[Corollary~3]{kesten0}
states that if   $\zeta_1, \zeta_2,\ldots$ are i.i.d.\ random variables with
  $\Exp   | \zeta_1 |  =\infty$, then as $t \to \infty$,  $t^{-1} S_t$
either: (i) tends to $+\infty$ a.s.; (ii) tends to $-\infty$ a.s.; or
(iii) satisfies
\begin{equation}
\label{oss}
-\infty = \liminf_{t \to \infty} t^{-1} S_t < \limsup_{t \to \infty} t^{-1} S_t = + \infty , \as \end{equation}
Erickson \cite{eric1} gives criteria for classifying such behaviour.
Other classical
results deal with the growth rate of the upper envelope of $S_t$, i.e., determining  sequences $a_t$
 for which $|S_t| \geq a_t$ infinitely often (or not), or $S_t \geq a_t$ infinitely often;
here we mention the work
 of Feller \cite{feller1}, as well as results related to the Marcinkiewicz--Zygmund
strong law of large numbers (see e.g.\ \cite[Theorem 1]{km2}).
The lower envelope  behaviour, i.e.,
when $|S_t| \geq a_t$ all but finitely often,
is considered by Griffin \cite{griffin} (particularly Theorem 3.5); see also Pruitt \cite{pruitt}.

 Note that (\ref{oss}) can hold and $S_t$ be transient (with respect to bounded sets); Lo\`eve \cite[\S 26.2]{loeve1}
gives the example of a symmetric stable random walk without a mean.
The general criterion for deciding between transience and recurrence
 is due to Chung and Fuchs (see e.g.\ \cite[Theorem 9.4]{kall} or \cite[\S 26.2]{loeve1}),
 and is rather subtle: Shepp showed \cite{shepp2} that there exist distributions for $\zeta_1$ with {\em arbitrarily heavy} tails
but for which $S_t$ is still recurrent.
By assuming additional regularity for the distribution of $\zeta_1$, one can obtain more tractable
criteria for recurrence; Shepp gives
 a criterion when the distribution of $\zeta_1$ is symmetric \cite[Theorem 5]{shepp1}.

In the present paper we extend aspects of this classical theory to a much more general setting, in which $X_t$ is an $(\F_t)_{t\in\Z^+}$-adapted
process whose increments satisfy certain moment or tail conditions. Our primary interest is the case of one-sided transience,
when $X_t \to +\infty$ a.s.\ or $X_t \to -\infty$ a.s. We give criteria classifying such behaviour, and quantify the rate of escape
via almost-sure bounds. We also quantify the transience by studying the existence and non-existence of moments for
 {\em first passage times} and {\em last exit times}; in the setting of $X_t = S_t$ a sum of i.i.d.\ random variables, corresponding sharp results are given
 by Kesten and Maller \cite{km1}. We   state our results for this model in Section~\ref{sec:results}.

Our proofs are robust and are based on semimartingale ideas, and so are quite different from the arguments used for the i.i.d.\ case.
Semimartingale techniques are by now well established for stochastic systems that are `near-critical' in some sense and whose increments have at least one moment; see for example
\cite{lamp1,lamp3,aim,fmm,mvw,mw3}. One contribution of the present paper is to show that essentially similar
methods are equally powerful in the heavy-tailed setting.
While not as sharp as the results available in the i.i.d.\ case, our
results are considerably more general, and our proofs are relatively
short, and based on some   intuitively appealing
ideas.

We give applications of our general results to Markov chains on {\em strips} of the form $\mathcal{A} \times \Z$
for a countable (finite or infinite) set $\mathcal{A}$.
Random walks on strips or half strips ($\mathcal{A} \times \Z^+$) have received  attention in the literature
(see \cite{fmm,malyshev,falin} and references therein), motivated by various applied problems, including queuing theory;
they
 can also be viewed as random walks with {\em internal degrees of freedom}, which were introduced by Sinai as a tool
for studying the Lorentz gas (see e.g.\ \cite{ks}). We are concerned with the case in which the
$\Z$-components of the
increments of the walk  have {\em heavy tails}; the previous   literature has considered only the light-tailed setting
(typically, assuming uniformly bounded increments). The heavy-tailed setting leads to new phenomena, including a phase transition
governed by the recurrence properties of the projection onto $\mathcal{A}$ of the process.

We describe the strip model and corresponding results in detail in Section \ref{sec:strips}; to finish this section we give one additional source
of motivation, arising from {\em risk theory}, and outline the main features of our results.
A special case of our strip model can   be viewed as an insurance or portfolio model in the presence
of rare catastrophes. In the Markov chain $(U_t,V_t)$ on $\mathcal{A} \times \Z$,
$V_t \in \Z$ is the total revenue of the insurance company,
or the total value of the portfolio, after $t$ time units (days, say). The other
variable, $U_t \in \mathcal{A}$, represents the current `state of the market', with $U_t=0$ (say) corresponding
to a catastrophe. Suppose that $\Exp [ V_{t+1} - V_t \mid U_t = \ell ] = \mu_\ell >0$
is well-defined for $\ell \neq 0$; $\mu_\ell$ is the average daily profit, which, in the insurance model,
is determined by insurance premiums and the daily pay-out rate under usual conditions.
On the other hand, when $U_t = 0$, we assume $V_t$ decreases by a {\em non-integrable} amount, representing
the catastrophic crash. Catastrophes are rare, so we assume that the time between successive
visits to $U_t=0$ is itself
 non-integrable. Under what conditions is eventual ruin assured?
This model extends  the standard {\em risk process} of insurance theory: see e.g.\ \cite[\S 3.5.1]{resnick}.

Our results show a crucial distinction between two possible scenarios, depending on whether the {\em induced Markov chain}  $U_t$ is positive- or null-recurrent
($U_t$ is itself a Markov chain under the conditions that we impose). If $U_t$ is positive-recurrent, the boundary state $0 \in \mathcal{A}$ dominates the asymptotics,
and $V_t \to - \infty$. The case where $U_t$ is null-recurrent  is more subtle, and we give conditions for $V_t \to -\infty$ or $V_t \to + \infty$ depending
on the tails of the increments of $V_t$ at $U_t =0$ and the tails of the return times of $U_t$ to state $0$. We also quantify the rate of transience, giving rates
at which $V_t$ tends to $\pm \infty$. In the context of the risk model, our results confirm the expectation that pricing is problematic in such genuinely heavy-tailed
risk situations: in certain conditions, the insurance company cannot stabilize the situation
however large $\mu_\ell$, $\ell \neq 0$ may be (i.e., however much premium it charges); we refer to Section \ref{sec:strips} for precise statements.

 \section{Main results}
\label{sec:results}

 We write $\Delta_t := X_{t+1} - X_t$, $t \in \Z^+$, for the increments of $X_t$.
 For any real number $x$, we write
$x^+ := x \1 \{ x > 0 \}$ and $x^- := - x \1 \{ x < 0 \}$,
 where `$\1$' denotes the indicator function; thus   $x = x^+ - x^-$.

For definiteness, we take  $X_0 =0$ throughout. In most of our results, we impose `heavy tail' conditions
on either $\Delta_t^+$ or $\Delta_t^-$; typically these conditions are one-sided (i.e., inequalities).
The following basic result shows that, under the conditions of most of our theorems, the process $X_t$
has non-trivial asymptotic behaviour. The proofs of this and  of the other results in this section are given in Section \ref{sec:proofs}.

\begin{proposition}
\label{a0prop}
Suppose that either (i) there exist $\gamma > 0$, $c>0$, and $x_0 < \infty$ for which
$\Pr [  \Delta_t^+ > x \mid \F_t ] \geq c x^{-\gamma}$, a.s., for all $x \geq x_0$ and all $t$; or (ii)
there exist $\gamma \in (0,1)$, $c>0$, and $x_0 < \infty$ for which
$\Exp [  \Delta_t^+ \1 \{ \Delta_t^+ \leq x \} \mid \F_t ] \geq c x^{1-\gamma}$, a.s., for all $x \geq x_0$ and all $t$;
or either (i) or (ii) holds with $\Delta_t^-$ instead of $\Delta_t^+$.
 Then
\begin{equation}
\label{a0}
\limsup_{t \to \infty} | X_t | = \infty, \as
\end{equation}
\end{proposition}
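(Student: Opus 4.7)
The plan is to argue by contradiction using the conditional Borel--Cantelli lemma (Lévy's extension): under the heavy-tail hypotheses, the positive (or negative) increments must exceed any prescribed threshold infinitely often almost surely, which is incompatible with $X_t$ being eventually bounded.

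First I would reduce case (ii) to a tail bound of the form of (i). For $0 < y < x$, the pointwise inequality $\Delta_t^+ \1\{\Delta_t^+ \leq x\} \leq y + x \1\{\Delta_t^+ > y\}$ combined with hypothesis (ii) gives
$$\Pr[\Delta_t^+ > y \mid \F_t] \geq \frac{cx^{1-\gamma} - y}{x}.$$
Choosing $y = (c/2) x^{1-\gamma}$ and re-parametrising in terms of $y$, this reads $\Pr[\Delta_t^+ > y \mid \F_t] \geq c' y^{-\gamma'}$ for all $y$ sufficiently large, with $\gamma' = \gamma/(1-\gamma) > 0$ since $\gamma \in (0,1)$. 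Hence (ii) implies a tail bound of the same shape as (i), and it suffices to treat case (i).

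Assuming (i), I would fix $M \in \N$ and $T \in \Z^+$, set $y = \max(2M+1, x_0)$, and note that $\Pr[\Delta_t^+ > y \mid \F_t] \geq c y^{-\gamma} > 0$ almost surely for every $t$. In particular $\sum_{t \geq T} \Pr[\Delta_t^+ > y \mid \F_t] = \infty$ a.s., so Lévy's extension of the Borel--Cantelli lemma yields $\Delta_t^+ > y$ for infinitely many $t$, almost surely. On the event $B_{M,T} := \{|X_s| \leq M \text{ for all } s \geq T\}$, however, $\Delta_t^+ \leq 2M < y$ for every $t \geq T$, which contradicts the Borel--Cantelli conclusion. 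Hence $\Pr[B_{M,T}] = 0$, and since
$$\bigl\{ \limsup_{t \to \infty} |X_t| < \infty \bigr\} = \bigcup_{M,T \in \N} B_{M,T},$$
the desired identity (\ref{a0}) follows. The case in which the hypotheses are placed on $\Delta_t^-$ is handled identically after replacing $X_t$ by $-X_t$.

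The computation is short and essentially routine; the main subtlety is to work with the events $B_{M,T}$ rather than directly with $\{\limsup_t |X_t| \leq M\}$, so that a single deterministic threshold $y$ is available in advance to drive the Borel--Cantelli contradiction. The reduction (ii)$\Rightarrow$(i) is the only part that requires a non-trivial (but standard) Chebyshev-type manipulation.
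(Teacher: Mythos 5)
Your proof is correct and follows essentially the same route as the paper's: establish a uniform lower bound on the conditional tail of the increments, then invoke L\'evy's extension of the Borel--Cantelli lemma to contradict eventual boundedness of $X_t$. The only point worth noting is that your reduction of (ii) to a tail bound, via the pointwise inequality $\Delta_t^+ \1\{\Delta_t^+ \le x\} \le y + x\1\{\Delta_t^+ > y\}$ with the scaling $y = (c/2)x^{1-\gamma}$, is a little slicker than the paper's truncated-moment argument and actually produces a polynomial tail lower bound $\Pr[\Delta_t^+ > y \mid \F_t] \ge c' y^{-\gamma/(1-\gamma)}$, which is stronger than the constant-per-level bound the paper establishes (and more than the proposition needs); your bookkeeping with the events $B_{M,T}$ is also slightly more careful than the paper's informal ``suppose $\limsup |X_t| = B$''.
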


In the i.i.d.\ case where $X_t = S_t = \sum_{s=1}^t \zeta_s$ and $\Exp | \zeta_1 | = \infty$, (\ref{a0}) follows from the
result of Kesten \cite[Corollary~3]{kesten0} mentioned above, and (\ref{a0})
also holds automatically if $X_t$ is an irreducible time-homogeneous
Markov chain on a locally finite unbounded subset of $\R$.

Our first main result gives conditions under which $X_t$ is transient to the right, i.e., $X_t \to +\infty$ a.s.\ as $t \to \infty$
(or transient to the left, by considering $-X_t$).
Together with our Theorem \ref{speed} below on the rate of escape,
Theorem \ref{trans} can be viewed as an analogue
of Erickson's \cite{eric1} result in the case
of a sum of i.i.d.\ random variables; in the i.i.d.\
case the conclusion of Theorem \ref{trans} follows from \cite[Corollary~1]{eric1}.
The results of \cite{eric1} show that the conditions
in Theorem \ref{trans} are close to optimal (see also
Remark  \ref{rmk1} and the comments in Section \ref{appendix}).

\begin{theo}
\label{trans}
Let $\alpha \in (0,1)$ and $\beta > \alpha$.
Suppose that there exist $C < \infty$, $c>0$, and $x_0 < \infty$
for which, for all $t$,
\begin{equation}
\label{betacon}
 \Exp [ (\Delta_t^-)^\beta \mid \F_t ] \leq C, \as,\end{equation}
  and, for all $x \geq x_0$ and all $t$,
 \begin{equation}
\label{alpha1} \Exp [ \Delta_t^+ \1 \{ \Delta_t^+ \leq x \}  \mid \F_t ] \geq c x^{1-\alpha} , \as \end{equation}
Then  $X_t \to + \infty$ a.s.\ as $t \to \infty$.
\end{theo}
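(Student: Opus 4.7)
The plan is to combine Proposition \ref{a0prop} with a bounded-supermartingale argument whose free parameter, a deep negative level $a$, will eventually be sent to $-\infty$. Since (\ref{alpha1}) is exactly hypothesis (ii) of Proposition \ref{a0prop} with the proposition's $\gamma$ equal to $\alpha$, we already have $\limsup_{t\to\infty}|X_t|=\infty$ almost surely; it then suffices to show that the walk is bounded below on an event of arbitrarily high probability, for then the only remaining option is $X_t\to+\infty$.

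The main work is building the right Lyapunov function. For constants $b>0$, $\gamma\in(0,\beta-\alpha)$ (permissible since $\beta>\alpha$), and a parameter $a<0$, I would set
\[
 f(x) := \bigl( b + (x-a)^+ \bigr)^{-\gamma}, \qquad x\in\R,
\]
so that $f$ is non-increasing with $f(x)\equiv b^{-\gamma}$ on $(-\infty,a]$ and $f(x)\to 0$ as $x\to+\infty$. The aim is to show that, provided $b$ is fixed large enough in terms of $c$, $C$, $x_0$ from (\ref{betacon})--(\ref{alpha1}), $f(X_t)$ is a supermartingale. For $X_t\le a$ this is immediate because $f$ attains its maximum there. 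For $X_t=y>a$, writing $\tilde y:=b+y-a\ge b$, a Fubini identity yields
\[
 \Exp[(f(y+\Delta_t)-f(y))\1\{\Delta_t\ge 0\}\mid \F_t] = -\gamma\int_0^\infty (\tilde y+u)^{-\gamma-1}\Pr[\Delta_t^+>u\mid\F_t]\,\ud u,
\]
and the estimate $\int_0^{\tilde y}\Pr[\Delta_t^+>u\mid\F_t]\,\ud u\ge\Exp[\Delta_t^+\1\{\Delta_t^+\le\tilde y\}\mid\F_t]\ge c\tilde y^{1-\alpha}$ coming from (\ref{alpha1}) bounds this good contribution by $-c_1\tilde y^{-\gamma-\alpha}$. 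The contribution of moderate negative jumps ($0<\Delta_t^-\le\tilde y/2$, keeping $y+\Delta_t$ in the power-law regime) is controlled by monotonicity by a multiple of $\tilde y^{-\gamma-1}\Exp[\Delta_t^-\1\{\Delta_t^-\le\tilde y/2\}\mid\F_t]$, and this last conditional expectation is $\le C\tilde y^{1-(\beta\wedge 1)}$ by a H\"older-type estimate derived from (\ref{betacon}); heavy negative jumps contribute at most $b^{-\gamma}\Pr[\Delta_t^->\tilde y/2\mid\F_t]\le C\tilde y^{-\beta}$ by Markov's inequality. Since $\alpha<\beta\wedge 1$ and $\gamma+\alpha<\beta$, the good term dominates the two error terms once $b$ is large; this drift calculation is the main technical obstacle.

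Granted the supermartingale property, the conclusion follows quickly. Set $\sigma_a:=\inf\{t\ge 0:X_t\le a\}$; then $f(X_{\sigma_a})=b^{-\gamma}$ on $\{\sigma_a<\infty\}$, so optional stopping of the bounded supermartingale $f(X_{t\wedge\sigma_a})$ gives
\[
 b^{-\gamma}\Pr[\sigma_a\le t]\le\Exp[f(X_{t\wedge\sigma_a})]\le f(0)=(b-a)^{-\gamma},
\]
hence $\Pr[\sigma_a<\infty]\le\bigl(b/(b-a)\bigr)^\gamma\to 0$ as $a\to-\infty$. On $\{\sigma_a=\infty\}$ the process is bounded below by $a$, so Proposition \ref{a0prop} forces $\limsup_{t\to\infty}X_t=\infty$; combined with the a.s.\ convergence of $f(X_t)$ to some limit (from nonnegative supermartingale convergence), this limit must be $0$, that is $X_t\to+\infty$. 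Therefore $\Pr[X_t\to+\infty]\ge 1-\bigl(b/(b-a)\bigr)^\gamma$ for every $a<0$, and letting $a\to-\infty$ yields $\Pr[X_t\to+\infty]=1$, completing the proof.
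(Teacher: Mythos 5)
Your proof is correct, and the heart of it --- the Lyapunov function and the drift estimate --- is the same as the paper's (Lemma~\ref{super}). The paper works with $f_{z,\delta}(y)=(1+(y-z)^+)^{-\delta}$; your $f(x)=(b+(x-a)^+)^{-\gamma}$ is exactly the same shape with the normalising constant $1$ replaced by a free parameter $b$. That replacement is the genuine structural difference, and it buys a cleaner stopping-time step. Because the drift estimate only kicks in when the argument of the power is large, the paper's Lemma~\ref{super} yields a supermartingale property only on $\{X_t>z+A\}$ for some $A$; to bridge the gap the paper introduces two stopping times ($\nu_x$, the first entrance above $x+a$, and $\eta_x$, the subsequent return below $x$), first proves $\Pr[\liminf_t X_t=-\infty]=0$, and then upgrades to $X_t\to+\infty$ via Proposition~\ref{a0prop}. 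By taking $b$ large you make the drift inequality hold on all of $\{X_t>a\}$, and since $f$ is at its maximum on $\{X_t\le a\}$ the supermartingale property is trivial there; so $f(X_t)$ is a bona fide supermartingale, and a single hitting time $\sigma_a$ with Doob's optional stopping gives $\Pr[\sigma_a<\infty]\le(b/(b-a))^\gamma$ directly. Your closing step (nonnegative supermartingale convergence forces $f(X_t)\to0$ on $\{\sigma_a=\infty\}$ once Proposition~\ref{a0prop} supplies $\limsup X_t=\infty$) is also a valid alternative to the paper's route of extending the hitting-probability bound (\ref{eq34}) to all levels $x$. In short: same Lyapunov idea, same truncation-plus-Fubini drift computation, but a tidier boundary treatment and a slightly more streamlined conclusion.
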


\begin{rmk}
\label{rmk1}
Condition (\ref{alpha1}) is natural. For $\gamma \leq 1$,  $(\Delta_t^+)^\gamma \geq   x^{\gamma - 1} \Delta_t^+ \1 \{ \Delta_t^+ \leq x \}$ for any $x>0$, so (\ref{alpha1})
implies that $\Exp [ (\Delta_t^+)^\gamma \mid \F_t ] = \infty$ for any $\gamma > \alpha$.
A counterexample due to K.L.\ Chung (see the {\em Mathematical Reviews} entry for \cite{dr};
 also Baum \cite{baum})
shows that (\ref{alpha1}) cannot be
replaced by
a condition on the moments of the increments,
even in the case of a sum of i.i.d.\ random variables.
Chung's example has, for $\alpha \in (0,1)$ and $\beta>\alpha$,
 $\Exp [ (\zeta^-_1)^\beta ] <\infty$ and $\Exp [ (\zeta^+_1)^\alpha ] =\infty$,
but $\Exp [ \zeta^+_1 \1 \{ \zeta^+_1 \leq x \} ] = o ( x^{1-\alpha} )$
along a subsequence, so (\ref{alpha1}) does not hold.
For $X_t = S_t$ as in Chung's example, $\liminf_{t \to \infty} X_t = -\infty$, a.s.
\end{rmk}

Our next two results deal with the growth rate of $X_t$, and provide
almost-sure bounds. First we have the following upper bounds.

\begin{theo}
\label{cor1}
Suppose that there exist $\theta \in (0,1]$, $\phi \in \R$, $x_0 < \infty$ and $C < \infty$ such that, for all $x \geq x_0$
and all $t$,
\begin{equation}
\label{eq40}   \Pr [ \Delta^+_t  \geq x \mid \F_t ]  \leq C x^{-\theta} (\log x)^\phi , \as \end{equation}
\begin{itemize}
\item[(i)] If $\theta \in (0,1)$, then, for any $\eps>0$,  a.s., for all but finitely many $t \in \Z^+$,
\[ X_t \leq t^{1/\theta} (\log t)^{\frac{\phi+2}{\theta} +\eps}. \]
\item[(ii)] If $\theta = 1$, then, for any $\eps>0$,  a.s., for all but finitely many $t\in\Z^+$,
\[ X_t \leq t  (\log t)^{ (1+\phi)^++1   +\eps}. \]
\end{itemize}
\end{theo}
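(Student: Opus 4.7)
The plan is to bound $X_t\le\sum_{s=0}^{t-1}\Delta_s^+$ and then to control the sum by a truncation-plus-martingale decomposition. Fix $\eps>0$, choose a truncation sequence $a_s$, and write
\[
 \sum_{s=0}^{t-1}\Delta_s^+ = \sum_{s=0}^{t-1}\Delta_s^+\1\{\Delta_s^+>a_s\} + A_t + M_t,
\]
where $A_t := \sum_{s=0}^{t-1}\Exp[\Delta_s^+\1\{\Delta_s^+\le a_s\}\mid\F_s]$ is the predictable compensator and $M_t$ the associated martingale. In case~(i) I would take $a_s := s^{1/\theta}(\log s)^{(\phi+2)/\theta}$; in case~(ii) I would take $a_s := s(\log s)^\rho$ for a suitable $\rho$ in an interval whose endpoints depend on the sign of $\phi+1$. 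The target is to show that each of the three pieces above is almost surely dominated by the claimed upper bound.

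For the exceptional sum, assumption~(\ref{eq40}) and the definition of $a_s$ give $\Pr[\Delta_s^+>a_s\mid\F_s]\le C s^{-1}(\log s)^{-1-\delta}$ for some $\delta>0$, so L\'evy's conditional extension of the Borel--Cantelli lemma forces $\Delta_s^+\le a_s$ for all but finitely many $s$, a.s. For the compensator, the layer-cake identity combined with~(\ref{eq40}) yields
\[
 \Exp\bigl[\Delta_s^+\1\{\Delta_s^+\le a_s\}\mid\F_s\bigr] \le O(1) + \int_{x_0}^{a_s} Cx^{-\theta}(\log x)^\phi\,\ud x ,
\]
which equals $O(a_s^{1-\theta}(\log a_s)^\phi)$ when $\theta\in(0,1)$ and $O((\log a_s)^{(\phi+1)^+})$ (or $O(\log\log a_s)$ at $\phi=-1$) when $\theta=1$. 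Summing over $s\le t$, and using $\sum_{s\le t}s^{(1-\theta)/\theta}(\log s)^\gamma\asymp\theta\, t^{1/\theta}(\log t)^\gamma$ in case~(i), delivers the required control on $A_t$.

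For the martingale, an analogous layer-cake calculation gives $\Exp[(\Delta_s^+)^2\1\{\Delta_s^+\le a_s\}\mid\F_s]\le Ca_s^{2-\theta}(\log a_s)^\phi$, so the predictable quadratic variation $\langle M\rangle_t$ grows like $t^{2/\theta}(\log t)^{\phi+\rho(2-\theta)}$ in case~(i) and like $t\cdot a_t(\log t)^\phi$ in case~(ii). Applying Doob's $L^2$ maximal inequality along the dyadic sequence $t_k=2^k$, together with Borel--Cantelli, then gives $\max_{s\le t}|M_s|\le (\log t)^{1/2+\eps}\sqrt{\langle M\rangle_{2t}}$ for all sufficiently large $t$, a.s. With the choices of $a_s$ above a direct computation verifies that this bound, and the bound on $A_t$, are both dominated by $t^{1/\theta}(\log t)^{(\phi+2)/\theta+\eps}$ in case~(i) and by $t(\log t)^{(1+\phi)^+ +1+\eps}$ in case~(ii), completing the proof. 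The main obstacle will be the exponent arithmetic: arranging for a single~$\rho$ to simultaneously make the Borel--Cantelli series summable, keep the drift $A_t$ small, and suppress the martingale fluctuations; the boundary case $\theta=1$ additionally requires a case split according to the sign of $\phi+1$ in evaluating the compensator integral.
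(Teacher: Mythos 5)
Your proof is correct, but it follows a genuinely different route from the paper. The paper's proof is short and indirect: it chooses a concave Lyapunov function $h(x) = (K+x)^{\theta}(\log(K+x))^{-\phi-1-\eps}$, verifies that $\Exp[h(\Delta_t^+)\mid\F_t]$ is uniformly bounded (an easy consequence of~(\ref{eq40}) via $h'(x)=O(x^{\theta-1}(\log x)^{-\phi-1-\eps})$), and then invokes the general Lemma~\ref{upper}, which in turn rests on the imported maximal inequality Lemma~\ref{mvwlem} from~\cite{mvw} (itself essentially Lemma~\ref{maxlem} run along a dyadic subsequence). The concave transform plays the role of a ``soft'' truncation: it squashes the heavy tail of $\Delta_t^+$ so that $h(Y_t)$ becomes a process with uniformly bounded conditional drift. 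Your argument instead performs a ``hard'' truncation at a deterministic level $a_s$, decomposes the partial sum into an exceptional part (controlled by L\'evy's conditional Borel--Cantelli), a predictable compensator (bounded by summing the truncated first moments), and a martingale (handled by Doob's $L^2$ inequality plus Borel--Cantelli along $t_k=2^k$). That is exactly what the black box Lemma~\ref{mvwlem} hides, so in effect you unpack the machinery rather than importing it. The cost is more bookkeeping (three separate pieces, a case split at $\phi=-1$); the benefit is that the proof is more self-contained, and, as a bonus, your computation actually delivers a slightly sharper exponent: with $a_s = s^{1/\theta}(\log s)^{(\phi+2)/\theta}$ you get the compensator $\lesssim t^{1/\theta}(\log t)^{(\phi+2)/\theta-2}$ and the martingale $\lesssim t^{1/\theta}(\log t)^{(\phi+2)/\theta-1/2+\eps}$, both strictly below the target; and choosing $a_s = s^{1/\theta}(\log s)^{(\phi+1)/\theta+\delta}$ for small $\delta>0$ already yields the optimal exponent $(\phi+1)/\theta+\eps$ mentioned in Remark~\ref{rmk3}, which the paper's method cannot reach because of the mandatory $(\log t)^{1+\eps}$ in Lemma~\ref{mvwlem}. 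One small precision to make in the write-up: $\langle M\rangle_{2t}$ is random, so the a.s.\ bound $\max_{s\le t}|M_s|\le(\log t)^{1/2+\eps}\sqrt{\langle M\rangle_{2t}}$ should be phrased with a deterministic majorant $B_{2t}\ge\langle M\rangle_{2t}$ in place of $\langle M\rangle_{2t}$; since the conditional second moments of the truncated increments are bounded by the deterministic quantity $a_s^{2-\theta}(\log a_s)^\phi$ up to constants, such a $B_n$ is available and the dyadic Borel--Cantelli step then goes through exactly as you describe.
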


\begin{rmk}
\label{rmk3}
In the case of a sum of independent random variables,
Theorem \ref{cor1} is slightly weaker than optimal. Suppose that
$\zeta_1, \zeta_2, \ldots$ are independent, and that for some $\theta \in (0,1)$ and $\phi \in \R$,
\[ \sup_{k \in \N} \limsup_{x \to \infty} (  x^\theta (\log x)^{-\phi}  \Pr [ | \zeta_k|  \geq x ] ) < \infty .\]
 Then, with $S_t = \sum_{s=1}^t \zeta_s$, for any $\eps>0$,  a.s., for all but finitely many $t\in\Z^+$,
\begin{equation}
\label{indepup}
  | S_t | \leq t^{1/\theta} (\log t)^{\frac{\phi+1}{\theta} +\eps}.\end{equation}
The bound (\ref{indepup})
belongs to a family of classical results with a long history;
the case $\phi =0$ is due to L\'evy and Marcinkiewicz (quoted
by Feller \cite[p.~257]{feller1}), and the general case of (\ref{indepup})
 follows for example from a result of Lo\`eve \cite[p.~253]{loeve1}.
Under the additional condition that the summands are identically distributed,
sharp results are given  by Feller \cite[Theorem~2]{feller1};
for a recent reference, see  \cite{kruglov}. Related results in the i.i.d.\ case
are also given by
 Chow and Zhang \cite{cz} (see also \cite[Theorem~2]{km2}).
\end{rmk}

The next result shows that if we impose a variant of the condition (\ref{alpha1}) in Theorem \ref{trans},
 not only does $X_t \to +\infty$, a.s., but  it does so at a particular rate of escape.

\begin{theo}
\label{speed}
Let $\alpha \in (0,1)$ and $\beta > \alpha$. Suppose that there exist $C < \infty$, $c>0$, and $x_0 < \infty$ for which
 (\ref{betacon}) holds, and
 \begin{equation}
\label{tail1}
\Pr [ \Delta_t^+ > x \mid \F_t ] \geq c x^{-\alpha} , \as,\end{equation}
for all $x \geq x_0$ and all $t$.
Then for any $\eps>0$, a.s., for all but finitely many $t \in \Z^+$,
\[ X_t \geq   t^{1/\alpha} (\log t)^{-(1/\alpha)-\eps}   .\]
\end{theo}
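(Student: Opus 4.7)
The plan is to bound $X_t$ from below via the decomposition
\[
X_t = \sum_{s=0}^{t-1} \Delta_s^+ - \sum_{s=0}^{t-1} \Delta_s^- \geq \max_{0 \leq s < t} \Delta_s^+ - \sum_{s=0}^{t-1} \Delta_s^- ,
\]
using (\ref{tail1}) to push the maximum up to the target rate and (\ref{betacon}) to show that the cumulative negative part is of strictly smaller order. Set $y_t := t^{1/\alpha}(\log t)^{-1/\alpha - \eps/2}$.

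For the first term I would use a direct Borel--Cantelli argument. Iterating the bound $\Pr [ \Delta_s^+ < y \mid \F_s ] \leq 1 - c y^{-\alpha}$ (valid as soon as $y \geq x_0$, by (\ref{tail1})) over the filtration gives
\[
\Pr\!\left[ \max_{0 \leq s < t} \Delta_s^+ < y_t \right] \leq \big(1 - c y_t^{-\alpha}\big)^t \leq \exp\!\big(-c t y_t^{-\alpha}\big) = \exp\!\big(-c (\log t)^{1+\alpha\eps/2}\big),
\]
which is summable in $t$. Hence a.s.\ $\max_{0 \leq s < t} \Delta_s^+ \geq y_t$ for all but finitely many $t$.

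For the second term I would truncate. Markov's inequality applied to $(\Delta_s^-)^\beta$ together with (\ref{betacon}) yields $\Pr [ \Delta_s^- > Y \mid \F_s ] \leq C Y^{-\beta}$, and, by integrating this tail bound when $\beta < 1$ and by Jensen's inequality when $\beta \geq 1$, one obtains $\Exp [ \Delta_s^- \wedge Y \mid \F_s ] \leq \kappa ( 1 + Y^{(1-\beta)^+} )$. Taking $Y = Y_t := t^{1/\beta + \eta}$ for small $\eta > 0$, a union bound shows that the probability that some $\Delta_s^-$ with $s < t$ exceeds $Y_t$ is summable in $t$, and Markov's inequality applied at the dyadic times $T_k := 2^k$ gives $\sum_{s < T_k} \Delta_s^- \leq T_k^{\max(1,1/\beta) + 2\eta}$ with summable-in-$k$ failure probability. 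Monotonicity of $\sum_{s < t} \Delta_s^-$ in $t$ then extends this bound from the skeleton $T_k$ to every $t \in [T_k, T_{k+1})$, so a.s.\ eventually $\sum_{s < t} \Delta_s^- = O\big( t^{\max(1,1/\beta) + 2\eta} \big)$.

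To combine, observe that $\max(1,1/\beta) < 1/\alpha$: indeed $\alpha < 1$ gives $1/\alpha > 1$, and $\beta > \alpha$ gives $1/\beta < 1/\alpha$. Choosing $\eta$ small enough, the negative contribution is $O ( t^{1/\alpha - \delta} )$ for some $\delta > 0$, hence negligible compared with $y_t$, and we conclude $X_t \geq y_t - o(y_t) \geq t^{1/\alpha}(\log t)^{-1/\alpha - \eps}$ for all but finitely many $t$. The main technical obstacle is the second step: unlike the first, a per-$t$ Borel--Cantelli bound fails because Markov's inequality applied to the sum of negatives at each $t$ is not summable, which is why one has to evaluate only along the dyadic subsequence $T_k$ and then interpolate by monotonicity.
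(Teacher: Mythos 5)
Your proof is correct and shares the paper's high-level decomposition $X_t \geq \max_{0\leq s<t}\Delta_s^+ - \sum_{s<t}\Delta_s^-$, with each half handled separately. The first half is identical to the paper's Lemma~\ref{lem7}: iterate the conditional tail bound through the filtration to get a geometric bound for the event $\{\max_{s<t}\Delta_s^+ < y_t\}$, then Borel--Cantelli. Where you diverge is the control of $\sum_{s<t}\Delta_s^-$. The paper applies Lemma~\ref{upper} to $-X_t$ with the concave map $h(x)=x^{\beta\wedge 1}$: subadditivity of $h$ gives $\Exp[h(Y_{t+1})-h(Y_t)\mid\F_t]\leq C$ for $Y_t = \sum_{s<t}\Delta_s^-$, and Lemma~\ref{mvwlem} (a semimartingale maximal inequality) then yields the a.s.\ bound $\sum_{s<t}\Delta_s^-\leq t^{1/(\beta\wedge 1)}(\log t)^{(1/(\beta\wedge 1))+\eps}$ in one step, uniformly in $t$, with only a logarithmic correction. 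You instead truncate $\Delta_s^-$ at $Y_t$, bound the truncated conditional means, apply Markov's inequality at dyadic times, and interpolate by monotonicity, paying a small polynomial correction $t^{2\eta}$; since $\max(1,1/\beta)=1/(\beta\wedge 1)<1/\alpha$, that correction is harmless. Your route is more elementary and self-contained, at the price of the dyadic bookkeeping; the paper's route is shorter once the maximal inequality is available as a black box and gives a sharper (logarithmic) error, which is why it is stated as a reusable lemma (also needed for Theorem~\ref{last2}). One small slip to tidy: you write that the union bound $\Pr[\exists\,s<t:\Delta_s^->Y_t]\leq Ct^{-\beta\eta}$ is ``summable in $t$'', but for small $\eta$ it is not; as with the Markov step, you should sum it only along the dyadic skeleton $T_k=2^k$, where it is clearly summable. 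Your closing sentence shows you already understand this, so it is a wording issue rather than a gap, but the statement as written is false.
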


\begin{rmk}
\label{rmk4}
Note that (\ref{tail1}) implies that, a.s.,
\[
\Exp [ ( \Delta^+_t )^{\alpha} \mid \F_t ]
= \int_0^\infty \Pr [ \Delta^+_t > y^{1/\alpha} \mid \F_t ] \ud y \geq c \int_{x_0}^\infty y^{-1} \ud y = \infty .\]
Conditions (\ref{alpha1}) and (\ref{tail1}) are closely related, but neither implies
the other.
However, if one replaces the inequalities by equalities, the former implies the latter:
more generally, see Lemma \ref{regvar} in the Appendix.
In the case where $X_t = S_t$ is a sum of i.i.d.\ random variables,
a weaker version of Theorem \ref{speed} was obtained by Derman and Robbins \cite{dr}
and stated in a stronger form by Stout \cite[Theorem~3.2.6]{stout}; although Stout's
statement is still weaker than our Theorem \ref{speed}, his proof gives essentially
the same result (in the i.i.d.\ case). Also relevant in the i.i.d.\ case is a result of Chow and Zhang \cite[Theorem~1]{cz}.
 Chung's counterexample (see Remark \ref{rmk1}) shows that the condition
 (\ref{tail1}) cannot be replaced by a moments condition, for instance.
 \end{rmk}

Theorems \ref{cor1} and \ref{speed} have the following immediate corollary.

\begin{cor}
\label{cor2}
Let $\alpha \in (0,1)$ and $\beta > \alpha$. Suppose that (\ref{betacon}) holds for some $C<\infty$ and all $t$, and that,
uniformly in $t$ and $\omega$,
\[ \lim_{x \to \infty} \frac{\log \Pr [ \Delta_t^+ > x \mid \F_t]}{\log x} = -\alpha, \as\]
Then
\[ \lim_{t \to \infty} \frac{ \log X_t}{\log t} = \frac{1}{\alpha}, \as \]
\end{cor}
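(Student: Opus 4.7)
The plan is to sandwich the tail of $\Delta_t^+$ between pure polynomial envelopes and invoke Theorem \ref{cor1} for the upper bound on $X_t$ and Theorem \ref{speed} for the lower bound, then send the sandwiching parameter to zero.

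First, translate the uniform log-limit into a two-sided polynomial bound: for every $\delta \in (0,\delta_0)$ with $\delta_0$ small enough that $\alpha - \delta_0 > 0$, $\alpha + \delta_0 < 1$, and $\alpha + \delta_0 < \beta$, uniform convergence of $(\log \Pr [ \Delta_t^+ > x \mid \F_t ])/\log x$ to $-\alpha$ supplies some $x_0(\delta) < \infty$ such that, a.s., for all $t$ and all $x \geq x_0$,
\[ x^{-(\alpha+\delta)} \;\leq\; \Pr [ \Delta_t^+ > x \mid \F_t ] \;\leq\; x^{-(\alpha-\delta)}. \]
Now apply Theorem \ref{cor1} with $\theta = \alpha - \delta \in (0,1)$ and $\phi = 0$ (the trivial conversion between $\{\Delta_t^+ > x\}$ and $\{\Delta_t^+ \geq x\}$ costs only a constant factor, absorbed into $C$), obtaining, a.s.\ for all but finitely many $t$, $X_t \leq t^{1/(\alpha-\delta)} (\log t)^{2/(\alpha-\delta) + \eps}$. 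Separately, apply Theorem \ref{speed} with $\alpha$ replaced by $\alpha + \delta$ (condition (\ref{betacon}) is preserved since $\beta > \alpha + \delta$), obtaining, a.s.\ for all but finitely many $t$, $X_t \geq t^{1/(\alpha+\delta)} (\log t)^{-1/(\alpha+\delta) - \eps}$. In particular $X_t \to +\infty$ a.s., so $\log X_t$ is eventually well defined, and taking logarithms, dividing by $\log t$, and letting $t \to \infty$ gives
\[ \frac{1}{\alpha+\delta} \;\leq\; \liminf_{t \to \infty} \frac{\log X_t}{\log t} \;\leq\; \limsup_{t \to \infty} \frac{\log X_t}{\log t} \;\leq\; \frac{1}{\alpha-\delta}, \as \]

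Intersecting over a countable sequence $\delta_n \downarrow 0$ yields the claimed limit. There is no serious obstacle here: the argument is essentially bookkeeping, since the analytic content is already carried by Theorems \ref{cor1} and \ref{speed}. The only care needed is in choosing parameters compatible with the hypotheses of those theorems and in confirming that the uniform log-limit does produce polynomial envelopes uniform in $t$ and $\omega$, which is immediate from the meaning of uniformity.
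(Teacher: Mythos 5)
Your proof is correct and follows exactly the same route as the paper's: convert the uniform log-limit into two-sided polynomial tail envelopes, apply Theorem~\ref{cor1} to the upper envelope for the upper bound and Theorem~\ref{speed} to the lower envelope for the lower bound, then send the sandwiching parameter to zero. The only difference is bookkeeping detail (you track the logarithmic corrections and the parameter constraints explicitly, and note the $>$ versus $\geq$ issue), which the paper leaves implicit.
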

\begin{proof}
Note that the uniformity in the condition in the corollary ensures that for any $\eps>0$ there exists $x_0 < \infty$ such that, for all $x \geq x_0$ and all $t$,
\[  x^{-\alpha - \eps} \leq \Pr [ \Delta_t^+ > x \mid \F_t] \leq x^{-\alpha + \eps} ,\as \]
 Theorem \ref{cor1} with the upper bound in the last display and  (\ref{betacon}) then shows that
for any $\eps >0$, a.s., $X_t \leq t^{(1/\alpha)+\eps}$ for all but finitely many $t$.
On the other hand, Theorem \ref{speed} with the lower bound in the last display and  (\ref{betacon}) shows that
for any $\eps >0$, a.s., $X_t \geq t^{(1/\alpha)-\eps}$ for all but finitely many $t$. Since $\eps>0$ was arbitrary, the result follows.
\end{proof}

For any $x \in \R$, write
\begin{equation}
\label{tau}
 \tau_x := \min \{ t \in \Z^+ : X_t \geq x \}, \end{equation}
for the {\em first passage time} into the half-line $[x,\infty)$;
here and throughout the paper we adopt the usual convention that $\min \emptyset := \infty$.
Under the conditions of Theorem \ref{trans}, $X_t \to +\infty$, a.s.,
so that $\tau_x < \infty$ a.s., for all $x \in \R$. It is natural
to study the {\em tails} or {\em moments} of the random variable $\tau_x$
in order to quantify the transience in a precise sense.
In the i.i.d.\ case  for $X_t = S_t$,
sharp results on the existence or non-existence of moments
for $\tau_x$ are given by Kesten and Maller \cite[Theorem 2.1]{km1};
see \cite{km1} for references to earlier work. In our more general setting,
we have the following two results.

\begin{theo}
\label{mom2}
Let $\alpha \in (0,1)$ and $\beta > \alpha$.
Suppose that there exist $c >0$, $C<\infty$, and $x_0 < \infty$
for which (\ref{betacon}) holds for all $t$ and (\ref{alpha1}) holds for all $x \geq x_0$ and all $t$.
Then for any $x \in \R$ and any $p \in [0, \beta/\alpha )$,
$\Exp [ \tau_x^{p}   ] <  \infty$.
\end{theo}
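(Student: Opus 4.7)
The strategy is a Foster--Lyapunov semimartingale argument with the test function
\[
\psi(y) := (x-y)_+^{\alpha},
\]
which vanishes on the target set $\{y \geq x\}$ and is comparable to the ``distance to target''. I will show that $\psi(X_t)$ admits a negative drift of constant order far below $x$, and that its positive increments have uniformly bounded $p$-th moments for every $p < \beta/\alpha$; a standard moment bound for passage times of such semimartingales (in the spirit of the results in \cite{aim}) then yields $\Exp[\tau_x^p] < \infty$.

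For the drift computation, write $d := x - X_t > 0$. Applying Fubini's theorem to the layer-cake representation of $u \mapsto u_+^\alpha$ yields
\[
\Exp[\psi(X_{t+1}) - \psi(X_t) \mid \F_t] = -\alpha \int_0^d (d-v)^{\alpha-1} \Pr[\Delta_t^+ \geq v \mid \F_t]\, dv + \alpha \int_0^\infty (d+u)^{\alpha-1} \Pr[\Delta_t^- > u \mid \F_t]\, du.
\]
In the first integral, the estimate $(d-v)^{\alpha-1} \geq d^{\alpha-1}$ together with condition (\ref{alpha1}) (via $\int_0^d \Pr[\Delta_t^+ \geq v \mid \F_t]\, dv \geq \Exp[\Delta_t^+ \1\{\Delta_t^+ \leq d\} \mid \F_t] \geq c d^{1-\alpha}$ for $d \geq x_0$) gives a contribution at most $-\alpha c$. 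In the second integral, I split at $u = d$: on $[0,d]$ use $(d+u)^{\alpha-1} \leq d^{\alpha-1}$ together with $\Exp[\min(\Delta_t^-, d) \mid \F_t] = O(d^{1 - (\beta \wedge 1)})$ (from (\ref{betacon})); on $[d, \infty)$ use $(d+u)^{\alpha-1} \leq u^{\alpha-1}$ together with the Markov tail $\Pr[\Delta_t^- > u \mid \F_t] \leq C u^{-\beta}$. Both contributions vanish as $d \to \infty$ because $\alpha < \beta \wedge 1$, so there exist $d_0$ and $c' > 0$ with $\Exp[\psi(X_{t+1}) - \psi(X_t) \mid \F_t] \leq -c'$ a.s.\ on $\{X_t \leq x - d_0\}$.

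The key one-sided increment bound comes from the subadditivity of $u \mapsto u^\alpha$ on $[0,\infty)$:
\[
\bigl(\psi(X_{t+1}) - \psi(X_t)\bigr)_+ \leq (\Delta_t^-)^{\alpha}.
\]
By Lyapunov's inequality combined with (\ref{betacon}), for every $p < \beta/\alpha$ we have $\alpha p < \beta$ and hence $\Exp[(\Delta_t^-)^{\alpha p} \mid \F_t] \leq C^{\alpha p/\beta}$ uniformly. To repair the drift bound on the ``near-target'' region $\{x - d_0 < X_t < x\}$, I modify the test function to $\widetilde\psi(y) := \psi(y) + K\, \1\{y < x\}$ for a large constant $K$: the discontinuity at $y = x$ contributes an additional $-K \Pr[\Delta_t \geq x - X_t \mid \F_t]$ to the drift. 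Applying (\ref{alpha1}) at a scale $y$ of order $d_0^{1/(1-\alpha)}$ yields a uniform lower bound $\Pr[\Delta_t^+ \geq d_0 \mid \F_t] \geq \delta_0 > 0$, so for $K$ large enough $\widetilde\psi(X_t)$ has uniformly negative drift on all of $\{y < x\}$ while preserving the one-sided increment bound. A Foster--Lyapunov moment theorem applied to $\widetilde\psi(X_{t \wedge \tau_x})$ then gives $\Exp[\tau_x^p] < \infty$.

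The principal technical obstacle is the delicate decay estimate for the positive drift contribution from $\Delta_t^-$, which jointly exploits (\ref{alpha1}) and (\ref{betacon}) and where the hypothesis $\beta > \alpha$ is decisive. A secondary difficulty, stemming from the absence of the Markov property, is the need to avoid a clumsy analysis of excursions between drift and near-target regions; this is handled by working with the single modified Lyapunov function $\widetilde\psi$ whose drift is negative throughout $\{y < x\}$, thanks to the uniformity in $\F_t$ of both hypotheses.
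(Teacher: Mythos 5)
Your choice of Lyapunov function $\psi(y) = (x-y)_+^{\alpha}$ and the ensuing drift computation are correct (the layer-cake identity, the lower bound $-\alpha c$ coming from (\ref{alpha1}), and the two tail estimates on the $\Delta_t^-$ contribution all check out, as does the repair $\widetilde\psi = \psi + K\1\{\cdot < x\}$ and the subadditivity bound $(\psi(X_{t+1}) - \psi(X_t))_+ \leq (\Delta_t^-)^\alpha$). However, the final step has a genuine gap: you need a passage-time moment criterion that converts \emph{constant} negative drift together with \emph{$q$-th moment bounds on the positive increments} of the Lyapunov process into $\Exp[\tau^q] < \infty$ for all $q < \beta/\alpha$. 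That is not Lemma \ref{aimlem}: that lemma requires only a drift bound $\Exp[Z_{t+1}-Z_t \mid \F_t] \leq -CZ_t^\eta$, and with your constant drift ($\eta = 0$) it yields only $\Exp[\tau^p] < \infty$ for $p \leq 1$, whereas $\beta/\alpha > 1$. The result you need (whose analogue for i.i.d.\ random walks with negative mean is classical, e.g.\ Gut or Janson: finite $q$-th moment of the positive part of the increment implies finite $q$-th moment of the first passage time to the negative half-line) is a non-trivial theorem in its own right in the semimartingale setting; invoking it as ``a standard moment bound'' or ``a Foster--Lyapunov moment theorem'' outsources the essential technical difficulty, which is precisely the content of Theorem \ref{mom2}.

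The paper avoids this by taking a different Lyapunov exponent: it uses $W_t = (y-X_t)^\gamma \1\{X_t < y\}$ with $\gamma = \theta\beta$ for $\theta$ close to $1$, so $\gamma$ near $\beta$ rather than your choice $\gamma = \alpha$. This makes the drift \emph{superlinear} rather than constant, $\Exp[W_{t+1}-W_t \mid \F_t] \leq -\eps W_t^\eta$ with $\eta$ as close as desired to $1 - \alpha/\beta$ (Lemma \ref{lem6}(ii)). Lemma \ref{aimlem} then directly gives $\Exp[\tau_x^p] < \infty$ for $p \leq 1/(1-\eta)$, and letting $\eta \uparrow 1 - \alpha/\beta$ recovers the full range $p < \beta/\alpha$ with no moment hypothesis at all on the increments of the Lyapunov process. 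So the trade-off is: your route uses a weaker (constant) drift condition but must impose and exploit moment conditions on the positive increments of $\psi(X_t)$, requiring a deeper semimartingale criterion you have not supplied; the paper's route accepts a harder Taylor-expansion computation in order to get a polynomially strong drift, after which the single, moment-free Lemma \ref{aimlem} does all the work. To repair your argument you would need to state and prove (or precisely cite from \cite{aim} or its sequels) a criterion of the form: if $\Exp[Z_{t+1}-Z_t \mid \F_t] \leq -\eps$ and $\Exp[(Z_{t+1}-Z_t)_+^q \mid \F_t] \leq B$ on $\{t < \tau\}$, then $\Exp[\tau^q] < \infty$.
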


\begin{theo}
\label{mom1}
Let $\alpha \in (0,1]$ and $\beta >0$.
Suppose that, for some $C<\infty$,  $\Exp [ (\Delta_t^+)^\alpha \mid \F_t ] \leq C$ a.s.\
for all $t$,
and
$\Exp [ (\Delta_t^-)^\beta \mid \F_t ] = \infty$ a.s.\ for all $t$.
Then, for any $x>0$, $\Exp [ \tau_x^{\beta/\alpha}   ] =\infty$.
\end{theo}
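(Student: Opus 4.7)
The plan is to exploit a single extreme negative jump at time $0$ as a lever. On the event $A_M:=\{\Delta_0^-\geq M\}$ the walk sits at $X_1\leq -M$ after one step, and since the positive increments have uniformly bounded conditional $\alpha$-moment the process cannot recover to $[x,\infty)$ by time $t$ unless an event of small probability occurs. Balancing $M$ against $t$ and then integrating against the tail of $\Delta_0^-$ will convert the hypothesis $\Exp[(\Delta_0^-)^\beta\mid\F_0]=\infty$ directly into $\Exp[\tau_x^{\beta/\alpha}]=\infty$.

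First I would fix $x>0$ and $M>0$ and set $S_t:=\sum_{r=1}^{t}\Delta_r^+$. Every $s\in\{1,\ldots,t\}$ satisfies $X_s\leq\Delta_0+S_t$, so $\{\tau_x\leq t\}\cap A_M\subseteq\{S_t\geq M+x\}$. Since $\alpha\in(0,1]$, concavity of $y\mapsto y^\alpha$ gives $S_t^\alpha\leq\sum_{r=1}^{t}(\Delta_r^+)^\alpha$, and the tower property combined with $\Exp[(\Delta_r^+)^\alpha\mid\F_r]\leq C$ yields $\Exp[S_t^\alpha\mid\F_1]\leq Ct$ a.s. Markov's inequality therefore gives $\Pr[S_t\geq M+x\mid\F_1]\leq Ct/M^\alpha$ a.s. Because $A_M\in\F_1$, integrating this bound over $A_M$ produces $\Pr[\tau_x\leq t,A_M]\leq\Pr[A_M]\,Ct/M^\alpha$, and hence
\[
\Pr[\tau_x>t]\geq\Pr[A_M]\bigl(1-Ct/M^\alpha\bigr).
\]
The choice $M=M(t):=(2Ct)^{1/\alpha}$ then yields $\Pr[\tau_x>t]\geq\tfrac{1}{2}\Pr[\Delta_0^-\geq(2Ct)^{1/\alpha}]$.

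To finish I would write $\Exp[\tau_x^{\beta/\alpha}]=(\beta/\alpha)\int_0^\infty t^{\beta/\alpha-1}\Pr[\tau_x>t]\,dt$, substitute $u=(2Ct)^{1/\alpha}$, and collapse the result to a positive multiple of $\int_0^\infty u^{\beta-1}\Pr[\Delta_0^-\geq u]\,du=\beta^{-1}\Exp[(\Delta_0^-)^\beta]$. Applying the tower property to the $t=0$ hypothesis gives $\Exp[(\Delta_0^-)^\beta]=\Exp[\Exp[(\Delta_0^-)^\beta\mid\F_0]]=+\infty$, so the integral diverges and the conclusion follows.

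The main obstacle is the conditional bookkeeping in the middle step: we need the Markov bound on $S_t$ uniform on the $\F_1$-measurable event $A_M$, not merely unconditionally, because $A_M$ and the subsequent increments $(\Delta_r)_{r\geq 1}$ are not assumed independent. This is what forces the proof through $\Exp[\cdot\mid\F_1]$ and the tower property; once that is in place, the remainder is a change of variables. Note that the method uses only the single large jump at time $0$, which is why only the unconditional $\beta$-moment of $\Delta_0^-$ ultimately appears.
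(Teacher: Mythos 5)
Your proof is correct and, at the strategic level, follows the same blueprint as the paper: exploit a single heavy-tailed negative jump at time $0$, then use the uniformly bounded conditional $\alpha$-moment of $\Delta^+_t$ to control the recovery time, and finally integrate the tail bound on $\tau_x$ against the tail of $\Delta_0^-$. Where you differ from the paper is in the technical mechanism for the recovery bound: the paper introduces the Lyapunov function $W'_t = (X_t-y)^\alpha\1\{X_t>y\}$, verifies the drift bound $\Exp[W'_{t+1}-W'_t\mid\F_t]\leq C$, and then invokes the Doob-type maximal inequality of Lemma~\ref{maxlem} to control $\max_{0\le r\le s}X_{t+r}$. You sidestep the maximal inequality entirely by comparing $X_s$ with the monotone nondecreasing process $S_t=\sum_{r=1}^t\Delta_r^+$: on $A_M$ one has $X_s\le -M+S_t$ for all $1\le s\le t$, so the maximum is controlled for free by monotonicity, and a single application of Markov's inequality to $S_t^\alpha$ (using subadditivity of $y\mapsto y^\alpha$ and the tower property conditionally on $\F_1$) suffices. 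This is a genuine simplification: the submartingale machinery is replaced by an elementary comparison, with no loss of strength — the resulting tail bound $\Pr[\tau_x>t]\ge\tfrac12\Pr[\Delta_0^-\ge(2Ct)^{1/\alpha}]$ and the concluding change of variables match the paper's conclusion exactly. The conditional bookkeeping you flag (working through $\F_1$ because $A_M$ need not be independent of the later increments) is exactly the right thing to worry about, and you handle it correctly.
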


Note that in Theorem \ref{mom2}, $\beta /\alpha > 1$, so in particular
$\Exp [ \tau_x ] < \infty$ for any $x \in \R$. The results of Kesten and Maller \cite{km1}
in the i.i.d.\ case show that the conditions in Theorems \ref{mom2} and \ref{mom1}
are not far from optimal:   see also the comments in Section \ref{appendix}.

Our final results for this section concern {\em last exit times}.
For $x \in \R$, let
\begin{equation}
\label{lastexit}
\lambda_x := \max \{ t \in \Z^+ : X_t \leq x \},
\end{equation}
the last time (if finite) at which $X_t \in (-\infty, x]$.
Again, if $X_t \to +\infty$ a.s.\ (such as under the conditions of Theorem \ref{trans})
then $\lambda_x < \infty$ a.s.\ for all $x \in \R$, and the moments of the random variables
$\lambda_x$ provide a  quantitative characterization of the transience.
Again, in the i.i.d.\ case sharp results are given by Kesten and Maller \cite[Theorem 2.1]{km1}.

\begin{theo}
\label{last1}
Let $\alpha \in (0,1)$ and $\beta > \alpha$.
Suppose that there exist $c >0$, $C<\infty$, and $x_0 < \infty$
for which (\ref{betacon}) holds for all $t$ and (\ref{alpha1}) holds for all $x \geq x_0$ and all $t$.
Then for any $x \in \R$ and any $p \in [0, (\beta/\alpha) -1 )$,
$\Exp [ \lambda_x^{p}   ] <  \infty$.
\end{theo}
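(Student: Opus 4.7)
The plan is to prove the tail bound $\Pr[\lambda_x > n] \leq C n^{1 - \beta/\alpha + \eps}$ for any $\eps > 0$ and all $n$ sufficiently large; together with the layer-cake identity
\[ \Exp[\lambda_x^p] = p \int_0^\infty t^{p-1} \Pr[\lambda_x > t] \, \ud t , \]
this yields finiteness of $\Exp[\lambda_x^p]$ whenever $p < \beta/\alpha - 1 - \eps$, and hence the full range $p < \beta/\alpha - 1$ after sending $\eps \downarrow 0$. Note that $\lambda_x < \infty$ almost surely is already supplied by Theorem \ref{trans}.

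Since $\{\lambda_x > n\} = \bigcup_{s > n} \{X_s \leq x\}$, a union bound gives
\[ \Pr[\lambda_x > n] \leq \sum_{s > n} \Pr[X_s \leq x] , \]
reducing the task to the pointwise estimate $\Pr[X_s \leq x] \leq C s^{-\beta/\alpha + \eps}$; since $\beta/\alpha > 1$ by hypothesis, the resulting tail sum satisfies $\sum_{s > n} s^{-\beta/\alpha + \eps} \leq C n^{1 - \beta/\alpha + \eps}$, as required. The ``$+1$'' gap between the moment ranges in Theorems \ref{mom2} and \ref{last1} comes precisely from this summation step.

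The heuristic behind the pointwise bound is that condition (\ref{alpha1}) pushes $X$ to a typical level of order $s^{1/\alpha}$ by time $s$, so the event $\{X_s \leq x\}$ is atypical and is dominated by a single large downward jump of size at least $s^{1/\alpha}$ occurring \emph{near} time $s$. By (\ref{betacon}) such a jump has probability $O(s^{-\beta/\alpha})$; crucially, it must occur in a bounded window immediately preceding $s$, for otherwise the subsequent drift would lift $X$ back above $x$ before time $s$, so no spurious factor of $s$ is acquired from enumerating the possible jump times. To make this rigorous, I would decompose $\{X_s \leq x\}$ according to the running maximum of $X$ on $[0,s]$: either $\max_{r \leq s} X_r \leq M_s$ for some $M_s$ of order $s^{1/\alpha}$ up to polylogarithmic factors, which is controlled by $\Pr[\tau_{M_s} > s]$ via a refinement of Theorem \ref{mom2} in the spirit of the proof of Theorem \ref{speed} (using Lemma \ref{regvar} from the Appendix to pass from condition (\ref{alpha1}) to a usable tail lower bound on $\Delta_t^+$), or $\max_{r \leq s} X_r > M_s$, in which case the last upcrossing before time $s$ is followed by a steep downward excursion controlled through (\ref{betacon}).

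The main obstacle is precisely this one-big-jump analysis for the second case: one must rule out a spurious linear factor in $s$ when union-bounding over the time of the big negative increment. I would handle this by invoking Theorem \ref{mom2} in a ``recovery time'' mode applied to the shifted process after the jump, observing that any large drop occurring at time $s_0$ significantly before $s$ would, with high probability, be followed by a first passage back above $x$ in time much less than $s - s_0$, thereby forcing $X_s > x$; hence only drops falling within a short window near $s$ can contribute to $\{X_s \leq x\}$, and summing the single-jump probability over this short window gives the required $s^{-\beta/\alpha + \eps}$ estimate.
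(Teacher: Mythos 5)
The reduction to the union bound $\Pr[\lambda_x > n] \leq \sum_{s > n} \Pr[X_s \leq x]$ is fine, but your claimed pointwise bound $\Pr[X_s \leq x] = O(s^{-\beta/\alpha + \eps})$ is false, and this is where the proof breaks. Consider the i.i.d.\ case with $\Pr[\zeta > z] \asymp z^{-\alpha}$ and $\Pr[\zeta < -z] \asymp z^{-\beta'}$ for $\beta'$ slightly larger than $\beta$ (so (\ref{betacon}) holds). A single big downward jump at \emph{any} time $s_0 \in \{0,\ldots,s-1\}$ can pull the walk to or below $x$ at time $s$: before the jump the walk sits near $s_0^{1/\alpha}$, after it the walk climbs by about $(s-s_0)^{1/\alpha}$, so a drop of size of order $s_0^{1/\alpha} + (s-s_0)^{1/\alpha} \asymp s^{1/\alpha}$ --- probability $\asymp s^{-\beta'/\alpha}$ --- lands $X_s$ near zero with conditional probability bounded away from $0$. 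These $\sim s$ scenarios are nearly disjoint by the one-big-jump principle, so $\Pr[X_s \leq x] \asymp s^{1-\beta'/\alpha}$, not $s^{-\beta'/\alpha}$. Your union bound then gives $\Pr[\lambda_x > n] \lesssim n^{2-\beta/\alpha + \eps}$ and only $p < (\beta/\alpha) - 2$, one power short of the target.

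The proposed recovery-time repair does not close this gap. A drop to level $-M$ with $M \asymp s^{1/\alpha}$ requires a recovery time of order $M^\alpha \asymp s$, which is \emph{comparable} to $s - s_0$ even when $s_0 \ll s$, not much smaller, so the walk is not safely back above $x$ by time $s$; and even if it did cross $x$ once it can re-enter $(-\infty,x]$ before time $s$. In effect you have conflated $\Pr[X_s \leq x]$ with $\Pr[\tau_x > s]$. The latter does decay like $s^{-\beta/\alpha}$, because it forces $X_r < x$ for \emph{every} $r \leq s$, which confines the big jump to an $O(1)$ window near time $0$; but the single-time event $\{X_s \leq x\}$ permits the jump anywhere, and the factor $s$ is genuine.

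The paper bypasses the union bound over $s$ entirely, bounding $\Pr[\lambda_x > t]$ directly via the inclusion $\{\lambda_x > t\} \subseteq \{\tau_y > t\} \cup \{\eta_{y,x} < \infty\}$, where $\eta_{y,x}$ is the first entry to $(-\infty,x]$ after the walk reaches $[y,\infty)$. Choosing $y \approx t^{(1/\alpha)-\eps}$, the no-return probability $\Pr[\eta_{y,x} < \infty]$ is $O(y^{-\delta})$ for $\delta$ near $\beta - \alpha$ via the supermartingale Lemma \ref{super}, while $\Pr[\tau_y > t]$ is $O(t^{1-\beta/\alpha + \eps})$ by combining the max-increment bound (\ref{maxtail}) with the large-deviation Lemma \ref{lem66}; these two estimates balance to give the sharp tail $O(t^{1-\beta/\alpha+\eps})$. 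Some device of this kind, which tracks the whole trajectory rather than a single time slice, seems unavoidable here.
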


\begin{theo}
\label{last2}
Let $\alpha \in (0,1]$ and $\beta >\alpha$.
Suppose that there exist  $c >0$, $C<\infty$, and $x_0 < \infty$
 such that  $\Exp [ (\Delta_t^+)^\alpha \mid \F_t ] \leq C$ a.s.\ for all $t$,
and, for all $x \geq x_0$ and all $t$,
$\Pr [ \Delta_t^- > x \mid \F_t ] \geq c x^{-\beta}$ a.s.
Then for any $x \in \R$ and any $p > (\beta/\alpha) -1$,
$\Exp [ \lambda_x^{p}   ] =  \infty$.
\end{theo}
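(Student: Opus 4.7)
The aim is to show $\Pr[\lambda_x > T] \geq c T^{1-\beta/\alpha}$ for all large $T$, so that $\Exp[\lambda_x^p] = p \int_0^\infty T^{p-1} \Pr[\lambda_x > T]\,dT$ diverges for every $p > \beta/\alpha - 1$. Heuristically, each step in a window of length $T$ near time $T$ carries a probability of order $T^{-\beta/\alpha}$ of witnessing a negative jump big enough to send $X$ below $x$, and these roughly $T$ independent chances give a total probability of order $T \cdot T^{-\beta/\alpha} = T^{1-\beta/\alpha}$.

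Because only a lower bound on the conditional tail of $\Delta_t^-$ is assumed, the corresponding ``big-jump'' events in the original process could in principle be badly positively correlated; to bypass this I would first reduce to an i.i.d.\ Pareto surrogate via stochastic dominance. On a suitably enlarged probability space, the conditional probability integral transform (together with extra uniform randomness to handle possible atoms) produces random variables $\zeta_0, \zeta_1, \ldots$, i.i.d.\ with $\Pr[\zeta_t > y] = c y^{-\beta}$ for $y \geq x_0$, each independent of $\F_t$ and satisfying $\zeta_t \leq \Delta_t^-$ a.s. The comparison walk $\tilde X_t := \sum_{s < t} (\Delta_s^+ - \zeta_s)$ then satisfies $\tilde X_t \geq X_t$, so $\tilde\lambda_x \leq \lambda_x$, and it suffices to prove the tail bound for $\tilde\lambda_x$. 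Subadditivity of $z \mapsto z^\alpha$ on $[0,\infty)$, together with the hypothesis on $\Delta_t^+$, yields $\Exp[(\tilde X_s^+)^\alpha] \leq C s$, whence $\Pr[\tilde X_s \leq U_s] \geq 3/4$ by Markov for the choice $U_s := (4Cs)^{1/\alpha}$.

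For $s \in [T, 2T]$, set $A_s := \{\zeta_s > U_s + |x| + 1\} \cap \{\tilde X_s \leq U_s\}$ and $N := \sum_{s=T}^{2T} \1_{A_s}$; on $A_s$ one has $\tilde X_{s+1} < x$, so $\{N \geq 1\} \subset \{\tilde\lambda_x > T\}$. The independence of $\zeta_s$ from $\F_s$ together with its exact tail gives $\Pr[A_s] = c(U_s + |x| + 1)^{-\beta} \Pr[\tilde X_s \leq U_s]$, so $\Exp N$ is of order $T^{1-\beta/\alpha}$; and the same exactness, now used as an \emph{upper} bound, yields the decorrelation estimate $\Pr[A_s \cap A_{s'}] \leq (4/3) \Pr[A_s] \Pr[A_{s'}]$ for $s < s'$ via iterated conditioning on $\F_{s'}$. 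Summing gives $\Exp[N^2] \leq \Exp N + (4/3)(\Exp N)^2 = O(\Exp N)$ since $\Exp N \to 0$, and Paley--Zygmund then delivers $\Pr[N \geq 1] \geq (\Exp N)^2 / \Exp[N^2] \geq c' \Exp N \geq c'' T^{1-\beta/\alpha}$, as required. The principal technical obstacle is the coupling construction: without an upper bound on the conditional tail of $\Delta_t^-$ in the original process, the natural events $A_s$ there can be arbitrarily positively correlated, and the second-moment estimate would collapse back to the single-step bound of order $T^{-\beta/\alpha}$, which only produces divergence of $\Exp[\lambda_x^p]$ for $p \geq \beta/\alpha$ and misses the theorem's range.
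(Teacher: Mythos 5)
Your argument is correct in its conclusions but follows a genuinely different route from the paper's. The paper works directly with $X_t$ via a submartingale Lyapunov function: under the stated hypotheses, Lemma~\ref{sub} makes $f_{x-A,\delta}(X_t)$ a local submartingale on $\{X_t>x\}$ for any $\delta>\beta-\alpha$, and an optional-stopping argument (stop at the first time after $t$ that $X$ falls below $x$ or rises above $y$, then let $y\to\infty$) gives $\Pr[\nu_{t,x}<\infty\mid\F_t]\geq f_{x-A,\delta}(X_t)$. Combined with $\Pr[X_t\leq(2Ct)^{1/\alpha}]\geq1/2$ (obtained, essentially as you do, from subadditivity of $z\mapsto z^\alpha$ plus a maximal inequality), this yields $\Pr[\lambda_x>t]\geq\eps\,t^{-\delta/\alpha}$; letting $\delta\downarrow\beta-\alpha$ gives the theorem. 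You instead replace the negative increments by an i.i.d.\ Pareto minorant $\zeta_t$ via the conditional probability integral transform, pass to the dominating walk $\tilde X_t$, and apply a second-moment (Paley--Zygmund) argument to the count of big negative jumps in the window $[T,2T]$. This avoids the Lyapunov/optional-stopping machinery entirely in favour of a coupling plus a decorrelation estimate that your exact-tail construction makes available. One incidental dividend is that you obtain the cleaner tail bound $\Pr[\lambda_x>T]\gtrsim T^{1-\beta/\alpha}$ without any $\eps$-loss in the exponent, which in fact covers the endpoint $p=\beta/\alpha-1$ as well.

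One point you should make explicit: your claim that $A_s$ forces $\tilde X_{s+1}<x$ silently uses the fact that on $\{\zeta_s>0\}$ one has $\Delta_s^->0$ (from the coupling $\zeta_s\leq\Delta_s^-$), hence $\Delta_s^+=0$, so the positive part of the increment at time $s$ cannot undo the big negative jump. Without that observation the step would be wrong, since $\Delta_s^+$ is not otherwise controlled on $A_s$. As written this is correct but looks like an oversight; spell it out. The other place that deserves care is the coupling construction itself: to get $\zeta_t\leq\Delta_t^-$ a.s., i.i.d.\ $\zeta_t$ each independent of the enlarged filtration, and the exact tail $\Pr[\zeta_t>y]=c y^{-\beta}$ for $y\geq x_0$, one needs to take the target tail to be $\min(1,cx_0^{-\beta})$ below $x_0$ (so that $\zeta_t$ is supported on $\{0\}\cup[x_0,\infty)$, making $\zeta_t\leq\Delta_t^-$ automatic when $\zeta_t\leq x_0$), and to adjoin independent uniforms to randomize at the atoms of the conditional law of $\Delta_t^-$. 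These are standard but not free; you have identified this as the principal technical obstacle, which is a fair assessment.
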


The rest of the paper is organized as follows.
In Section \ref{sec:appl} we give applications of our results from Section \ref{sec:results} to some non-classical
models, including Markov chains on strips with heavy-tailed increments. In Section \ref{sec:proofs} we prove our general results from Section \ref{sec:results}, and then in Section
\ref{sec:proofs3} we prove the results on applications given in Section \ref{sec:appl}. Finally,
in Section \ref{appendix}, we make some additional remarks on some of the conditions in our theorems
  and their relationship to conditions in the literature on sums of i.i.d.\ random variables.

Finally, we make a note on notation. We reserve the standard Landau $O( \, \cdot \,)$, $o(\, \cdot \, )$ notation for situations in which the implicit constants are non-random, i.e.,
the implicit inequalities are uniform in probability space elements $\omega$ (in some set of probability 1). So, for example, $Z_t = O ( a_t )$, a.s., if and only if
there exist some finite absolute constants $C_0$ and $t_0$ for which $Z_t \leq C_0 a_t$, a.s., for all $t \geq t_0$. In situations where it is convenient to extend the notation
to  allow $C_0 = C_0 (\omega)$
or $t_0 = t_0 (\omega)$ to be {\em random}, we augment the notation and write $O_\omega (\, \cdot \,)$, $o_\omega (\, \cdot \,)$ to make the distinction clear.

\section{Applications}
\label{sec:appl}

\subsection{Heavy-tailed random walks on strips}
 \label{sec:strips}

 In this section we describe an application of the one-dimensional results of Section \ref{sec:results}
 to a higher-dimensional model. The model we consider will be a random walk on a {\em strip}.
 Such models are of interest in various contexts: see \cite{falin} for a selection of references, including applications to
 communications systems, queueing models, and random walks with internal degrees of freedom.

  Denote by  $\SS_k := \{0,1,\ldots,k-1\} \times \Z$
 the strip of {\em width} $k$, and by $\SS_\infty := \Z^+ \times \Z$ the infinite-width strip.

 Starting with early work   of Malyshev \cite{malyshev},
 random walks on {\em finite}-width strips $\SS_k$
 (or half-strips
  $\{0,1,\ldots, k-1\} \times \Z^+$)
 have received some attention in the literature; see \cite{falin} and \cite[\S 3.1]{fmm}. The random walks in {\em periodic environments}
 described by Key \cite[\S 9]{key} are essentially random walks on strips;
 what we call strips are also known as {\em ladders}, see e.g.\ \cite{mr}.
 In these previous studies,
 the increments of the walk have been integrable.
 In the present paper
  we are primarily interested in the case of  an {\em infinite}-width strip with {\em non-integrable} increments for the random walk, which can give rise
 to very different and rather subtle phenomena. The model and results that we describe in this section
 can   be stated in  more generality in terms of random walks with a
 distinguished subset of the state space: for ease of exposition, we defer the more general description to
 Section \ref{sec:disting}.

We   consider a Markov chain $(U_t, V_t)$ on $\SS_k$ or $\SS_\infty$;
the first coordinate of the chain describes which {\em line}
the chain is currently on, 
while the second coordinate describes the
 location on the given line.
 The transition probabilities
 are given by
 \begin{align}
 \label{stripjumps}
   \Pr [ (U_{t+1}, V_{t+1} ) = (\ell', x + d) \mid U_t =\ell, V_t =x ] & =
\phi ( \ell, \ell'; d) , \end{align}
where $\phi$ satisfies the obvious conditions; the right-hand side of (\ref{stripjumps}) does not depend on $x$, so the transition
law is spatially homogeneous in the second coordinate.   In \cite{falin,fmm} the transition law has the same partial homogeneity  as expressed
 by (\ref{stripjumps}); in addition, \cite{falin,fmm}   make an assumption of a uniform one-sided bound on the increments, appropriate for   the
 problem on a half-strip. The translation invariance condition (\ref{stripjumps}) is also standard in the literature on random walks with internal degrees of freedom:
 see e.g.\ \cite{ks}.

A consequence of (\ref{stripjumps}) is that
\[ \Pr [ U_{t+1} = \ell' \mid U_t = \ell ] = \sum_{d \in \Z} \phi (\ell, \ell' ; d ) =: q _{\ell, \ell'} .\]
  Thus the projection $(U_t)_{t \in \Z^+}$ is itself  a Markov chain,
 which records the current line that the random walk is on; this Markov chain has transition
 probabilities $q_{\ell,\ell'}$.
 In the terminology of \cite[\S 3.1]{fmm}, $U_t$ is the {\em induced} chain.

 We remark that $W_t := (U_t, V_t - V_{t-1})$ also describes a Markov chain, with transitions $\Pr [ W_{t+1} = (\ell' , d') \mid W_t = (\ell ,d ) ]
 = \phi (\ell, \ell' ; d')$; one may write $V_t = V_0 + \sum_{s=1}^t v (W_s)$ where $v (\ell , d) = d$, so that $V_t$ may be represented as an {\em additive
 functional} of the Markov chain $W_t$. Additive functionals of Markov chains have been extensively studied, primarily in the case in which the underlying chain is ergodic: see
 e.g.\ \cite{rogers, kv,jko}.

   The primary assumption in this section is the following.
 \begin{itemize}
 \item[(B1)] Suppose that the transition probabilities of $(U_t,V_t)$ are given by (\ref{stripjumps}).
 Moreover, suppose that $U_t$ is an irreducible Markov chain and that $U_t$ is recurrent.
 \end{itemize}
Of course, in the finite-width setting,  irreducibility of $U_t$ automatically implies recurrence (in fact, positive-recurrence),
so the recurrence part of assumption (B1) is only non-trivial in the infinite-width setting, when $U_t \in \Z^+$.

\begin{rmk}
The structure of the strip is unimportant for our results. In fact, our results extend to
any appropriate model on $\mathcal{A} \times \Z$ for any countable set $\mathcal{A}$, provided the
induced chain on $\mathcal{A}$ is recurrent; more generally, see  Section \ref{sec:disting}.
Regarded in this way,   this framework also contains the {\em correlated} or {\em persistent} random walk  (see e.g.\ \cite{gillis})
 in which $\mathcal{A} = \{ \pm 1\}$ is a set of directions.
\end{rmk}

 Suppose for the moment that the Markov chain $(U_t)_{t\in\Z^+}$
 has a unique stationary distribution $(\pi_\ell)_{\ell \in \{0,\ldots,k-1\}}$
 with $\pi_\ell >0$ for all $\ell$.
 In the case where the in-line jump distributions each have a finite
 mean $\mu_\ell = \Exp [ V_{t+1} - V_t \mid U_t = \ell ]$,
 the recurrence  classification of the random walk on a   strip
 depends on $\sum \pi_\ell \mu_\ell$: see \cite{rogers} for a result along these lines for a broader class of additive functionals of Markov chains.
 In the case of a {\em half-strip}, the additive functional representation is not directly available, and
  recurrence/transience results are given
 in \cite[\S 3.1]{fmm}; an earlier result was obtained by Falin \cite{falin}.

Here we are interested in the very different situation, in either the finite-width
 or infinite-width case, in which at least one of the
 means $\mu_\ell$ is not defined. We take the $0$-line (the `boundary')
 to be a distinguished line
 with heavy tails with exponent $\alpha$ to the right, say; the other lines (the `bulk') may also have heavy tails
 (with exponent $\beta$ to the left, say). Under what conditions does the boundary dominate? Or the bulk?
The results that we present below give conditions under which $V_t \to + \infty$ or $V_t \to - \infty$.

Our main interest in this section is   the infinite-width case, for which the embedded process
$U_t$ need not be positive-recurrent: clearly the recurrence properties of $U_t$ are crucial.
Let $\nu := \min \{ t \in \N : U_t = 0 \}$
denote the time of the first return to the $0$-line.
Then under (B1), $U_t$ is positive-recurrent if $\Exp [ \nu] < \infty$ but null-recurrent
if $\Exp [ \nu] = \infty$.

A basic example to bear in mind is the case in which when $U_t=0$, $V_t$ jumps only in the positive direction with increments
of tail exponent $\alpha \in (0,1)$, while if $U_t \neq 0$, $V_t$ jumps in the negative direction with increments
of tail exponent $\beta$. We give results that show $V_t \to - \infty$ or $V_t \to + \infty$ depending on the
relationship between $\alpha$, $\beta$, and $\gamma$, the tail exponent of $\nu$; we also quantify the rate of escape of $V_t$.

To simplify our statements, we introduce some more notation. For $x \geq 0$,
\[ \Pr [ (V_{t+1} - V_t)^+ > x \mid U_t = \ell, V_t = z ] = \sum_{y > x} \sum_{\ell'} \phi ( \ell, \ell' ; y ) =: T_\ell ^+ (x) ,\]
which depends only on $\ell$ and $x$, and not on $z$ or $t$. Similarly, let
\begin{align*} T_\ell^- (x) & := \Pr [ (V_{t+1} - V_t)^- > x \mid U_t = \ell, V_t = z ] , \textrm{ and}
\\
 M_\ell ^\pm (\beta ) & := \Exp [ ( (V_{t+1} - V_t)^\pm )^\beta \mid U_t = \ell, V_t = z ] .\end{align*}

First we consider the case
where $U_t$ is positive-recurrent.
For example, suppose that  $|\mu_\ell| < \infty$ for all $\ell \neq 0$,
but that on line $0$ the mean of $V_t$ is undefined. In this case we show that, in contrast to the case in which all
the $\mu_\ell$ are finite,
this single line dominates the asymptotic behaviour of the process.
The intuition in this case is that the process spends a positive fraction
of its time in line $0$, and so the long jumps from line $0$ dominate.

\begin{theo}
\label{stripthm1}
Suppose that (B1) holds and that $U_t$ is positive-recurrent. Suppose that there exist
$\alpha \in (0,1)$, $\beta > \alpha$, and $C < \infty$ such that (i)
$M_\ell^- (\beta) \leq C$ for all $\ell$;
  (ii)
\begin{equation}
\label{0line}
\lim_{x \to \infty} \frac{ \log T_0^+ (x)}{\log x} = -\alpha;
\end{equation}
and (iii) $M_\ell^+ (\beta) \leq C$ for all $\ell \neq 0$.
Then $V_t \to +\infty$ a.s.\ as $t \to \infty$, and, moreover,
\[ \lim_{t \to \infty} \frac{ \log V_t}{\log t} = \frac{1}{\alpha}, \as \]
\end{theo}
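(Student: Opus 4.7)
The plan is to reduce the problem to the one-dimensional results of Section~\ref{sec:results} by exploiting the Markov structure. Let $\nu_0 := 0$ (assuming $U_0 = V_0 = 0$, which is no loss of generality by recurrence of $U_t$) and, for $k \in \N$, $\nu_k := \min\{t > \nu_{k-1} : U_t = 0\}$. Positive recurrence of $U_t$ gives $\Exp[\nu_1] = 1/\pi_0 \in (0,\infty)$, so $\nu_k/k \to 1/\pi_0$ a.s.\ by the SLLN. By the strong Markov property the excursion increments $\xi_k := V_{\nu_k} - V_{\nu_{k-1}}$ are i.i.d., and $X_k := V_{\nu_k} = \sum_{j=1}^k \xi_j$ is an ordinary random walk on $\R$.

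For the \emph{upper bound} on $V_t$, I would apply Theorem~\ref{cor1} directly. On $\{U_t = \ell\}$, $\Pr[\Delta_t^+ > x \mid \F_t] = T_\ell^+(x)$. For $\ell = 0$, assumption (ii) gives $T_0^+(x) \leq x^{-\alpha+\eps}$ for all $x$ sufficiently large; for $\ell \neq 0$, Markov's inequality and assumption (iii) give $T_\ell^+(x) \leq C x^{-\beta} \leq C x^{-\alpha}$ (using $\beta > \alpha$ and $x \geq 1$). Hence, uniformly in $t$ and $\omega$, $\Pr[\Delta_t^+ > x \mid \F_t] \leq C' x^{-\alpha + \eps}$ a.s.\ for $x$ large. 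Theorem~\ref{cor1}(i) with $\theta = \alpha - \eps$ then yields $V_t \leq t^{1/(\alpha - \eps) + o(1)}$ eventually, and letting $\eps \downarrow 0$ gives $\limsup_{t \to \infty}(\log V_t)/(\log t) \leq 1/\alpha$ a.s.

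For the matching \emph{lower bound}, I would apply Corollary~\ref{cor2} to the i.i.d.\ walk $(X_k)$. Decompose $\xi_1 = \eta + R$, where $\eta := V_1$ is the initial jump from line~$0$ and $R := \sum_{s=1}^{\nu_1 - 1}(V_{s+1}-V_s)$ is the excursion contribution away from line~$0$. Assumption (ii) gives $\log \Pr[\eta > x]/\log x \to -\alpha$, and (i) gives $\Exp[(\eta^-)^\beta] \leq C$. The summands in $R$ are indexed by $s$ with $U_s \neq 0$, where (i) and (iii) bound $\Exp[|V_{s+1}-V_s|^\beta \mid \F_s]$ uniformly. Fixing $\beta' \in (\alpha, 1 \wedge \beta)$ (non-empty since $\alpha < 1$), subadditivity of $x \mapsto x^{\beta'}$ together with $\{s<\nu_1\} \in \F_s$ yields the Wald-type bound $\Exp[|R|^{\beta'}] \leq C'' \Exp[\nu_1] < \infty$, so $\Pr[|R| > x] = O(x^{-\beta'})$ with $\beta' > \alpha$, and $R$ is negligible compared with $\eta$ on the logarithmic scale. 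Thus $\xi_1^+$ inherits the tail exponent $\alpha$ and $\Exp[(\xi_1^-)^{\beta'}] < \infty$, so Corollary~\ref{cor2} gives $X_k \to +\infty$ and $(\log X_k)/(\log k) \to 1/\alpha$ a.s.

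To transfer the lower bound to $V_t$ itself, for $t \in [\nu_k, \nu_{k+1})$ the downward fluctuation is controlled by $D_k := \sum_{s=\nu_k}^{\nu_{k+1}-1}(V_{s+1}-V_s)^-$; the $D_k$ are i.i.d., and since (i) bounds $M_\ell^-(\beta)$ for \emph{all} lines $\ell$, the same Wald argument gives $\Exp[D_k^{\beta'}] < \infty$. Markov's inequality and Borel--Cantelli then give $D_k = o(k^{1/\beta' + \eps})$ a.s., which is negligible compared to $V_{\nu_k} \geq k^{1/\alpha - \eps}$ since $1/\beta' < 1/\alpha$. Combined with $\nu_{k+1} = O(k)$, this yields $V_t \geq c\, t^{1/\alpha - \eps}$ eventually, completing the matching lower bound. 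The main obstacle is cleanly separating the heavy-tailed contribution $\eta$ from the bulk terms $R$ and $D_k$: the interval $(\alpha, 1 \wedge \beta)$ being non-empty (so that one can choose $\beta'$ simultaneously above $\alpha$ and at most $1$) is essential, and it is precisely the finite mean $\Exp[\nu_1] < \infty$ furnished by positive recurrence of $U_t$ that permits the Wald-type control of these bulk contributions.
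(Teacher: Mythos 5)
Your proof is correct, and the overall strategy — analyse the process at successive returns of $U_t$ to line $0$, control the excursion contribution, invoke the one-dimensional results of Section~\ref{sec:results}, and interpolate — is the same one the paper follows. The difference is where you cut in. The paper deduces Theorem~\ref{stripthm1} from the general Theorem~\ref{thm10}, whose proof is carried out \emph{without} the Markov property: the tail bounds on the increments $\Delta_t=Y_{\sigma_{t+1}}-Y_{\sigma_t}$ of the embedded process are obtained in Lemmas~\ref{up1} and~\ref{low1} via the maximal inequality of Lemma~\ref{maxlem} (with the random stopping time $\nu=\nu_t$ and (C2)), and then Theorems~\ref{cor1} and~\ref{speed} are applied to the embedded process and the interpolation is controlled separately. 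You instead exploit the Markov property available in the strip model to make the embedded increments $\xi_k=V_{\nu_k}-V_{\nu_{k-1}}$ genuinely i.i.d., so that the uniform-in-$t$-and-$\omega$ conditional tail estimates that the paper has to establish are automatic and Corollary~\ref{cor2} can be applied directly; your Wald-type bound $\Exp[|R|^{\beta'}]\le C''\Exp[\nu_1]$ via subadditivity of $x\mapsto x^{\beta'}$ and $\{s<\nu_1\}\in\F_s$ plays exactly the role that the submartingale maximal inequality plays in Lemma~\ref{up1}. Your direct upper bound on $V_t$ via Theorem~\ref{cor1} (bypassing the embedded process altogether) is a genuine simplification over the paper's interpolation step. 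The one point worth stating more carefully is the parenthetical ``non-empty since $\alpha<1$'': non-emptiness of $(\alpha,1\wedge\beta)$ uses both $\alpha<1$ and $\beta>\alpha$, though of course both hold under the hypotheses. In short: correct, and a clean specialisation that buys you i.i.d.\ simplifications at the cost of not covering the non-Markov generality of Theorem~\ref{thm10}.
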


Under conditions related in spirit   to those in Theorem \ref{stripthm1}, including ergodicity of $U_t$ and heavy tails for the increments
of $V_t$, certain  results on convergence to stable laws are obtained by Jara {\em et al.} \cite{jko}.

In the case where $U_t$ is {\em null-}recurrent, the intuition changes, since the process
spends only a vanishing fraction of its time in line $0$. In this case
the tail of $\nu$ becomes crucial, and  the effects of both the
boundary and the bulk may dominate, as shown by the contrast between
the next two theorems.

\begin{theo}
\label{stripthm2a}
Suppose that (B1) holds, $U_t$ is null-recurrent, and, for some $\gamma \in (0,1]$,
\begin{equation}
\label{nu1}
\lim_{t\to \infty} \frac{\log \Pr [ \nu > t]}{\log t} = - \gamma .\end{equation}
Suppose that there exist
$\alpha \in (0,1)$, $\beta > 0$, and $C < \infty$ such that (i)
$M_\ell^- (\beta) \leq C$ for all $\ell$;
  (ii) 
  (\ref{0line}) holds; and (iii)
  $M_\ell^+(\beta) \leq C$ for all $\ell \neq 0$.
Then if $\alpha < \gamma (\beta \wedge 1)$, $V_t \to +\infty$ a.s.\ as $t \to \infty$,
and, moreover,
\[ \lim_{t \to \infty} \frac{ \log V_t}{\log t} = \frac{\gamma}{\alpha}, \as \]
\end{theo}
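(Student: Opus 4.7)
The natural strategy is to sample the process at successive returns to line $0$: set $\tau_0 := 0$, $\tau_n := \inf \{ t > \tau_{n-1} : U_t = 0 \}$ for $n \geq 1$, and write $Y_n := V_{\tau_n}$, $\nu_n := \tau_n - \tau_{n-1}$, and $\xi_n := Y_n - Y_{n-1}$. By the strong Markov property combined with the spatial homogeneity in $V$ expressed by (\ref{stripjumps}), the pairs $\{ (\nu_n, \xi_n) \}_{n \in \N}$ form an i.i.d.\ sequence, with each $\nu_n$ distributed as $\nu$. The plan is to (i) show $\Pr [ \xi_1 > x ] = x^{-\alpha + o(1)}$ and $\Exp [ (\xi_1^-)^{\beta'} ] < \infty$ for some $\beta' > \alpha$; (ii) apply Corollary \ref{cor2} to both $Y_n = \sum_{i=1}^n \xi_i$ and $\tau_n = \sum_{i=1}^n \nu_i$; and (iii) pass from $Y_{N_t}$ back to $V_t$, where $N_t := \max \{ n : \tau_n \leq t \}$, by controlling within-excursion fluctuations.

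For (i), decompose $\xi_1 = \Delta + R$, with $\Delta := V_1 - V_0$ the initial jump (necessarily from line $0$) and $R := V_{\tau_1} - V_1$ the sum of the remaining $\nu_1 - 1$ in-excursion increments, all taken from bulk lines where both one-sided $\beta$-moments are bounded by $C$. Hypotheses (\ref{0line}) and $M_0^-(\beta) \leq C$ control $\Delta$ directly. For $R$, conditioning on $\nu_1 = k$ and using $|\sum_i Z_i|^{\beta \wedge 1} \leq \sum_i |Z_i|^{\beta \wedge 1}$ when $\beta \leq 1$ or Jensen's inequality when $\beta > 1$, Markov's inequality will give $\Pr [ |R| > x \mid \nu_1 = k ] \leq C' k x^{-(\beta \wedge 1)}$. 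Truncating at $K := x^{\beta \wedge 1}$ and invoking (\ref{nu1}) then yields $\Pr [ |R| > x ] \leq x^{-\gamma (\beta \wedge 1) + o(1)}$; since $\alpha < \gamma(\beta \wedge 1)$, $\Delta$ dominates $R$ and the target tail estimate for $\xi_1$ follows by complementary union bounds.

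For (ii), Corollary \ref{cor2} applied to $Y_n$ gives $\log Y_n / \log n \to 1/\alpha$ a.s., and applied to $\tau_n$ (with $\alpha$ replaced by $\gamma$ and the negative part trivial since $\nu_i \geq 1$) gives $\log \tau_n / \log n \to 1/\gamma$ a.s., whence $\log N_t / \log t \to \gamma$ a.s.\ (the boundary case $\gamma = 1$ to be handled by a separate renewal-theoretic argument, as Corollary \ref{cor2} requires $\alpha < 1$). For (iii), write $M_n^- := \sup_{\tau_{n-1} \leq t \leq \tau_n} (V_{\tau_{n-1}} - V_t)^+$, which is bounded above by $\sum_{s = \tau_{n-1}}^{\tau_n - 1} (V_{s+1} - V_s)^-$: every summand has bounded $\beta$-moment (including the first, via $M_0^-(\beta) \leq C$), so the same truncation as in (i) yields $\Pr [ M_n^- > x ] \leq x^{-\gamma (\beta \wedge 1) + o(1)}$, and Borel--Cantelli then produces $\max_{k \leq n} M_k^- \leq n^{1/(\gamma (\beta \wedge 1)) + o(1)}$ a.s. Since $\alpha < \gamma (\beta \wedge 1)$, this is $o(Y_n)$, and so $V_t \geq Y_n - M_{n+1}^- \to + \infty$ for $t \in [\tau_n, \tau_{n+1})$, at rate $\log V_t / \log t \geq \gamma/\alpha$. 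A symmetric argument with $M_{n+1}^+$, whose tail is $x^{-\alpha + o(1)}$ (dominated by $\Delta^+$ of the next excursion), gives the matching upper bound, which combined with $N_t = t^{\gamma + o(1)}$ completes the proof.

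The main obstacle is the tail bound in step (i): one must carefully balance the heavy tail of the excursion length $\nu$ against the bulk per-step $\beta$-moment, and it is exactly the resulting threshold $\gamma (\beta \wedge 1)$ that makes the hypothesis $\alpha < \gamma (\beta \wedge 1)$ natural and sharp for the argument. Step (iii) is secondary but still requires attention, since the within-excursion oscillation $M_n^-$ must be shown to grow strictly slower than $Y_n$; this is again controlled precisely by $\alpha < \gamma (\beta \wedge 1)$.
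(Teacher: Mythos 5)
Your proposal is correct in substance but takes a genuinely different route from the paper. The paper does not embed at returns to line~$0$ in the null-recurrent case; instead it proves the more general Theorem~\ref{thm11} (which does not assume the Markov property), and derives Theorem~\ref{stripthm2a} from it through the identification $Y_t = V_t + (2+U_t)^{-1}$. In that proof, $Y_t$ is decomposed by sign and by boundary/bulk,
\[
Y_0 + \sum_{m=0}^{N(t-1)} D^+_{\sigma_m} - \sum_{s=0}^{t} D_s^- \;\le\; Y_t \;\le\; Y_0 + \sum_{m=0}^{N(t)} D^+_{\sigma_m} + \sum_{s=0}^{t} D_s^+ \1\{Y_s\notin\CC\},
\]
and each sum is handled directly by Theorems~\ref{cor1} and~\ref{speed} together with the visit-count bound $N(t) = t^{\gamma+o_\omega(1)}$ from Lemma~\ref{visits}. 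Your approach instead samples $V_t$ at successive returns to line~$0$, exploits the i.i.d.\ renewal structure that the Markov property and~(\ref{stripjumps}) give, and reduces matters to the single Corollary~\ref{cor2} applied to the two i.i.d.\ sums $Y_n$ and $\tau_n$ plus a within-excursion oscillation estimate. That is closer in spirit to the paper's proof of the \emph{positive}-recurrent Theorem~\ref{stripthm1} (via Theorem~\ref{thm10} and Lemmas~\ref{up1}, \ref{low1}), extended to the case $\Exp[\nu]=\infty$. Both approaches buy the same threshold $\alpha < \gamma(\beta\wedge 1)$; yours is conceptually cleaner in the Markov setting but would not carry over to the non-Markov generality of Theorem~\ref{thm11}.

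Two points in your write-up need tightening before the argument is airtight. First, in step~(i), the bound $\Pr[|R|>x\mid\nu_1=k]\le C'kx^{-(\beta\wedge 1)}$ followed by truncation at $K=x^{\beta\wedge 1}$ looks, at first glance, as if it loses everything: the na\"ive estimate $\sum_{k\le K}k\Pr[\nu_1=k]\le K$ makes the first term $O(1)$. The computation actually works because $\sum_{k\le K}k\Pr[\nu_1=k]\le\Exp[\nu_1\wedge K]$ and, under~(\ref{nu1}), $\Exp[\nu_1\wedge K]=\sum_{j=0}^{K-1}\Pr[\nu_1>j]=K^{(1-\gamma)^++o(1)}$; with $K=x^{\beta\wedge 1}$ the two terms then balance at $x^{-\gamma(\beta\wedge 1)+o(1)}$. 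You should make that point explicit (and, more cleanly, avoid conditioning on $\{\nu_1=k\}$ — which is not $\G_s$-measurable for $s<k$ — by bounding $\Exp\bigl[|R|^{\beta\wedge 1}\1\{\nu_1\le K\}\bigr]$ directly through the stopping-time sum and the a.s.\ conditional moment bounds, much as the paper does in the proof of Lemma~\ref{up1} via Lemma~\ref{maxlem}). Second, the case $\gamma=1$ is not a separate ``renewal-theoretic argument'': the lower bound $\tau_n\ge n$ is trivial, and the upper bound $\tau_n\le n^{1+o_\omega(1)}$ follows from Theorem~\ref{cor1}(ii) applied to $X_t=\tau_t$ (this is exactly Lemma~\ref{times}(i) with $\gamma\wedge 1=1$), so it is covered by the same machinery. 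With these two clarifications your proof is complete and correct.
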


\begin{theo}
\label{stripthm2b}
Suppose that (B1) holds, $U_t$ is null-recurrent, and, for some $\gamma \in (0,1)$,
(\ref{nu1}) holds.
Suppose that there exist
$\alpha, \beta \in (0,1)$, $\delta >0$, and $C < \infty$ such that (i)
$M_0^+ (\alpha) + M_0^- (\alpha) \leq C$;
(ii) uniformly for all $\ell \neq 0$,
\[ \lim_{x \to \infty} \frac{ \log T_\ell^- (x)}{\log x} = - \beta ;\]
and (iii) $M_\ell^+ (\beta + \delta) \leq C$ for all $\ell \neq 0$.
Then if $\alpha > \gamma  \beta$, $V_t \to -\infty$ a.s.\ as $t \to \infty$, and, moreover,
\[ \lim_{t \to \infty} \frac{ \log |V_t|}{\log t} = \frac{1}{\beta}, \as \]
\end{theo}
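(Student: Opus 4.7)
The plan is to reduce the study of $V_t$ to the embedded random walk on line~$0$ and then invoke the one-dimensional results of Section~\ref{sec:results}. Set $\tau_0 := 0$, $\tau_k := \min\{t > \tau_{k-1} : U_t = 0\}$, $\nu_k := \tau_k - \tau_{k-1}$, and $\xi_k := V_{\tau_k} - V_{\tau_{k-1}}$. By the strong Markov property of $(U_t, V_t)$ at $\tau_k$ together with the translation invariance of $\phi$ in its $V$-coordinate in (\ref{stripjumps}), the pairs $(\nu_k, \xi_k)$ are i.i.d. I would apply Corollary~\ref{cor2} (which packages Theorems~\ref{trans},~\ref{speed}, and~\ref{cor1}) to the i.i.d.\ random walk $Y_n := -V_{\tau_n}$, with target exponent $\beta\gamma$, which lies in $(0,1)$ since $\beta, \gamma \in (0,1)$.

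The heart of the argument is the single-step estimate
\[ \lim_{x \to \infty} \frac{\log \Pr[\xi_1^- > x]}{\log x} = -\beta\gamma, \]
together with $\Exp[(\xi_1^+)^{\beta''}] < \infty$ for some $\beta'' > \beta\gamma$. Decompose $\xi_1 = J + E$, where $J := V_1 - V_0$ is the first jump (from line~$0$, with $\alpha$-moments by~(i)) and $E := V_{\tau_1} - V_1$ sums the $\nu_1 - 1$ jumps of the ensuing excursion on lines $\neq 0$ (with negative tail exponent $\beta$ by~(ii) and positive $(\beta + \delta)$-moment by~(iii)). For the upper bound, conditioning on $\nu_1 = n$ and exploiting $\beta < 1$, a truncation argument combining a union bound over large negative jumps with a Markov-inequality bound on the truncated sum yields $\Pr[E^- > x \mid \nu_1 = n] \lesssim n x^{-\beta + \eps}$ for $n \leq x^\beta$. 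Integrating against the tail of $\nu_1$ from~(\ref{nu1}),
\[ \Pr[E^- > x] \lesssim \Pr[\nu_1 > x^\beta] + x^{-\beta + \eps} \Exp[\nu_1 \1\{\nu_1 \leq x^\beta\}] \lesssim x^{-\beta\gamma + \eps'}, \]
while $\Pr[J^- > x] \lesssim x^{-\alpha}$ is strictly lighter since $\alpha > \beta\gamma$. For the matching lower bound I would use~(\ref{nu1}) to produce an excursion of length at least $x^{\beta + \eps}$ with probability at least $x^{-\beta\gamma - \eta}$, and then, by the uniform lower tail bound in~(ii), insert at least one excursion jump of magnitude $>4x$ while bounding the remainder of the excursion sum by~$x$ with positive probability. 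The moment bound $\Exp[(\xi_1^+)^{\beta''}] < \infty$ follows analogously: $\Exp[(J^+)^{\beta''}] < \infty$ for $\beta'' < \alpha$, and $\Exp[(E^+)^{\beta''}] < \infty$ for $\beta'' < (\beta + \delta)\gamma$ by a parallel calculation using hypothesis~(iii); both thresholds exceed $\beta\gamma$.

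With these estimates in hand, Corollary~\ref{cor2} applied to $Y_n$ (with parameter $\alpha = \beta\gamma$ and any $\beta''$ in the non-empty interval $(\beta\gamma,\, \alpha \wedge ((\beta + \delta)\gamma))$) yields $Y_n \to +\infty$ and $\log Y_n / \log n \to 1/(\beta\gamma)$ almost surely. A parallel application to the positive i.i.d.\ sequence $\nu_k$ (with target exponent $\gamma$, noting $\nu_k^- \equiv 0$) gives $\log \tau_n / \log n \to 1/\gamma$, hence $\log N_t / \log t \to \gamma$ for $N_t := \max\{k : \tau_k \leq t\}$. Composing,
\[ \frac{\log |V_{\tau_{N_t}}|}{\log t} \;=\; \frac{\log Y_{N_t}}{\log N_t} \cdot \frac{\log N_t}{\log t} \;\longrightarrow\; \frac{1}{\beta\gamma} \cdot \gamma \;=\; \frac{1}{\beta} , \quad \as \]

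Finally, to pass from $V_{\tau_{N_t}}$ to $V_t$ at intermediate times $t \in [\tau_{N_t},\, \tau_{N_t + 1})$, I would upgrade the tail estimate above to a maximal version, $\Pr[\sup_{\tau_{k-1} \leq t < \tau_k}|V_t - V_{\tau_{k-1}}| > x] \lesssim x^{-\beta\gamma + \eps}$, whence Borel--Cantelli gives within-excursion fluctuations of $o_\omega(k^{1/(\beta\gamma) + \eps})$, strictly smaller polynomially than $|Y_k| \asymp k^{1/(\beta\gamma)}$. This delivers both $\log|V_t|/\log t \to 1/\beta$ a.s.\ and $V_t \to -\infty$ a.s. The main obstacle will be the sharp two-sided tail estimate for $\xi_1^-$, especially its lower bound, which requires extracting an independent large-negative-jump event from a long excursion while respecting the conditioning on $\nu_1$; the upper bound is conceptually easier but still hinges on the correct interplay between union bounds and truncated-moment bounds, leveraging $\beta < 1$ so that the excursion sum is dominated by its maximum.
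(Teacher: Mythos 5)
Your approach is genuinely different from the paper's. You pass to the embedded random walk $n \mapsto V_{\tau_n}$ (the process watched only at returns to line~$0$), compute the tail exponent $\beta\gamma$ of the excursion increment $\xi_1$, apply Corollary~\ref{cor2} to this one-step-per-excursion walk, and then glue in the intermediate times. The paper instead proves the more general Theorem~\ref{thm12} by decomposing $V_t$ (equivalently $Y_t$) \emph{directly at every time $t$} into three sums---positive increments at boundary visits, positive bulk increments, negative bulk increments---and applying Theorems~\ref{cor1} and~\ref{speed} together with the counting estimates of Lemma~\ref{visits} to each piece separately. Your embedded-walk route is conceptually cleaner once the single-step tail estimate $\Pr[\xi_1^- > x] = x^{-\beta\gamma + o(1)}$ is established, but it relies on the i.i.d.\ structure of the excursion increments, which holds for the Markov strip model but not in the generality of Theorem~\ref{thm12}; the paper's termwise decomposition avoids both the delicate excursion-tail calculation and the intermediate-time gluing, at the cost of some bookkeeping. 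Your sketch of the tail estimate for $\xi_1^-$, including the lower bound via inserting one large jump into an excursion of duration $\gtrsim x^{\beta+\eps}$, is sound and matches the intuition behind the result.

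There is, however, a real gap in your final gluing step, and it is not merely a matter of unstated detail. You claim that the two-sided maximal inequality $\Pr[\sup_{\tau_{k-1} \le t < \tau_k} |V_t - V_{\tau_{k-1}}| > x] \lesssim x^{-\beta\gamma + \eps}$, via Borel--Cantelli, gives within-excursion fluctuations of size $o_\omega(k^{1/(\beta\gamma)+\eps})$, and that this is ``strictly smaller polynomially than $|Y_k| \asymp k^{1/(\beta\gamma)}$''. That inequality of exponents is false: $1/(\beta\gamma)+\eps > 1/(\beta\gamma)$, so the fluctuation bound you obtain is polynomially \emph{no smaller} than $|V_{\tau_k}|$. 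In particular, since $\log|V_{\tau_k}|/\log k \to 1/(\beta\gamma)$ only asserts $|V_{\tau_k}| = k^{1/(\beta\gamma) + o_\omega(1)}$ with a two-sided $o_\omega(1)$, nothing prevents the upward fluctuation during excursion $k$ from exceeding $|V_{\tau_{k-1}}|$ along a subsequence, so neither $V_t \to -\infty$ a.s.\ nor the lower bound on $\log|V_t|/\log t$ follows from what you have written.

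The fix requires exploiting the asymmetry between the upward and downward within-excursion drift, which your hypotheses make available but your maximal estimate discards. Only the \emph{negative} fluctuation has tail exponent $\beta\gamma$; the \emph{positive} fluctuation during an excursion is driven by the initial jump from line~$0$ (with $\alpha$-moments by~(i)) and the bulk positive increments (with $(\beta+\delta)$-moments by~(iii)), and a computation parallel to your excursion-tail argument gives it tail exponent at least $\min\{\alpha,\, (\beta+\delta)\gamma\} > \beta\gamma$. With that one-sided bound, Borel--Cantelli gives upward within-excursion fluctuations of $o_\omega(k^{1/(\beta\gamma) - \eps'})$ for some $\eps'>0$, which \emph{is} strictly polynomially smaller than $|V_{\tau_{k-1}}| \ge k^{(1/(\beta\gamma)) - \eps'/2}$ eventually, and the conclusion follows. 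You should replace the symmetric two-sided maximal inequality with the one-sided upward version, and correct the exponent comparison accordingly.
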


\begin{rmk}
In the present paper we do not address the behaviour of first passage or last exit times for the random walk on a strip: we
leave this as an open problem.
\end{rmk}

The next result demonstrates how, via a concrete family of examples, one may achieve the condition (\ref{nu1}).
To do this, we take $U_t$ to have {\em asymptotically zero drift}, specifically,
$\Exp [ U_{t+1} - U_t \mid U_t = x]$ to be of order $1/x$. Fundamental work of Lamperti \cite{lamp1,lamp3}
showed that such processes are near-critical from the point of view of recurrence classification.
We prove Proposition \ref{lamperti} using results from \cite{ai1,aim}, which generalize Lamperti's
work \cite{lamp3}.

\begin{proposition}
\label{lamperti}
Let $\gamma \in (0,1]$.
Suppose that there exist  $C < \infty$  and $\sigma^2 \in (0,\infty)$ such that
the following hold for all $x \in \Z^+$:
\begin{align*}
\Pr [ | U_{t+1} - U_t | \geq C \mid U_t = x ] & = 0 ;\\
\Exp [ ( U_{t+1} - U_t)^2 \mid U_t = x ] & = \sigma^2 + o( 1 ) ; \\
\Exp [ U_{t+1} - U_t  \mid U_t = x ] & = \left( \frac{1}{2} - \gamma \right) \frac{\sigma^2}{x} + o ( 1/x  ) .\end{align*}
Then  (\ref{nu1}) holds for this $\gamma \in (0,1]$.
\end{proposition}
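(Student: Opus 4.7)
My plan is to reduce (\ref{nu1}) to the Lamperti-type passage-time tail estimates established in \cite{ai1,aim}, which are designed for exactly chains of the form assumed here. Writing $\Delta := U_{t+1}-U_t$, the hypotheses give
\[
\frac{2x\,\Exp[\Delta \mid U_t = x]}{\Exp[\Delta^2 \mid U_t = x]} = 1 - 2\gamma + o(1).
\]
Since $\gamma \in (0,1]$, this limit lies in $[-1,1)$, so by Lamperti's classification the chain $U_t$ is null-recurrent (with $\gamma = 1$ the boundary case, handled by the bounded-increments hypothesis), so that $\nu < \infty$ a.s.\ from any starting point.

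For the upper tail bound I would take the Lyapunov function $f_p(x) := x^{2p}$ with $p \in (0,\gamma)$ fixed. A Taylor expansion of $(x+y)^{2p} - x^{2p}$ in $y$, using the bounded-increments assumption to control the remainder together with the assumed asymptotics for the first two conditional moments of $\Delta$, yields
\[
\Exp[f_p(U_{t+1}) - f_p(U_t) \mid U_t = x] = 2p\sigma^2 (p-\gamma) x^{2p-2} + o(x^{2p-2}),
\]
which is strictly negative for $x$ larger than some $K_p < \infty$. The polynomial-moment criteria for return times to a finite set in \cite{aim} then yield $\Exp[\nu^p \mid U_0 = u_0] < \infty$ for every initial state $u_0$ and every $p < \gamma$; Markov's inequality gives $\Pr[\nu > t \mid U_0 = u_0] \leq C_{p,u_0} t^{-p}$, and letting $p \uparrow \gamma$ yields $\limsup_{t\to\infty} (\log \Pr[\nu > t])/\log t \leq -\gamma$.

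For the matching lower bound I would apply the complementary moment non-existence criterion of \cite{aim}: with the same family $f_p$ but now $p > \gamma$, the leading term in the one-step drift becomes strictly positive of order $x^{2p-2}$ for large $x$, and the corresponding submartingale-type criterion shows $\Exp[\nu^p] = \infty$ for every $p > \gamma$. From the Fubini identity $\Exp[\nu^p] = p\int_0^\infty t^{p-1} \Pr[\nu > t] \,\ud t$, divergence of this integral for all $p>\gamma$ forces $\liminf_{t\to\infty} (\log \Pr[\nu > t])/\log t \geq -\gamma$, and combining the two bounds gives (\ref{nu1}).

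The main obstacle I expect is checking that the $o(1)$ and $o(1/x)$ error terms in the hypotheses really do propagate through the Taylor expansion to leave a clean leading term with the announced sign $(p-\gamma)$; once this is secured, everything reduces to an appeal to known criteria. The boundary case $\gamma = 1$ requires extra care because the Lyapunov drift then decays at the critical rate, but this is precisely the regime the results of \cite{ai1,aim} were built to cover. A secondary point, the dependence on the initial state $u_0$, is handled by irreducibility (from (B1) applied to the chain in question), which lets one compare return-time tails from different starting points up to a multiplicative constant absorbed by the $\log$ on both sides.
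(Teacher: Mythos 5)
Your upper bound argument is essentially the paper's: a polynomial Lyapunov function $x^{2p}$ (the paper uses $x^p$ with the threshold at $p=2\gamma$, a cosmetic reparametrisation), verification of the negative drift for subcritical exponents, an appeal to the passage-time moment criteria of \cite{aim} to get $\Exp[\nu^p]<\infty$ for $p<\gamma$, and Markov's inequality to give $\limsup_{t\to\infty}(\log\Pr[\nu>t])/\log t\leq-\gamma$. (In fact the paper quotes Proposition~1 of \cite{aim} to obtain the slightly sharper $\Exp[\nu^\gamma]<\infty$ at the critical exponent.)

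The lower bound, however, has a genuine gap. You argue that $\Exp[\nu^p]=\infty$ for all $p>\gamma$, combined with the identity $\Exp[\nu^p]=p\int_0^\infty t^{p-1}\Pr[\nu>t]\,\ud t$, forces $\liminf_{t\to\infty}(\log\Pr[\nu>t])/\log t\geq-\gamma$. That implication is false: divergence of the moment integral for every $p>\gamma$ only rules out a \emph{uniform} polynomial upper bound $\Pr[\nu>t]\leq Ct^{-\gamma-\eps}$, and thus forces the $\limsup$ of $(\log\Pr[\nu>t])/\log t$ to be at least $-\gamma$ along a subsequence; it says nothing about the $\liminf$. (Equivalently, $\sup\{p:\Exp[\nu^p]<\infty\}$ always equals $-\limsup_{t\to\infty}(\log\Pr[\nu>t])/\log t$, never the $\liminf$.) One can construct decreasing tail functions whose moment exponent is $\gamma$ yet whose $\liminf$ is strictly below $-\gamma$, so moment non-existence cannot by itself deliver the pointwise lower tail bound that (\ref{nu1}) requires. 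To close the gap you need a result giving a direct pointwise lower bound on $\Pr[\nu\geq t]$. This is exactly what the paper does: after verifying via Taylor's formula that $x\mapsto x^p$ is a submartingale for $U_t$ (for $p>2\gamma$ and $x$ large), it invokes Corollary~1 of \cite{ai1}, which yields $\Pr[\nu\geq t]\geq t^{-\gamma-\eps}$ for all sufficiently large $t$, hence $\liminf_{t\to\infty}(\log\Pr[\nu>t])/\log t\geq-\gamma$, completing (\ref{nu1}). Replacing your Fubini step with an appeal to that corollary would repair the argument.
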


As an example, one may take $U_t$ to be a simple symmetric random walk on $\Z^+$ with reflection at $0$; in that case, $\gamma = 1/2$.

\subsection{Non-homogeneous random walk with a distinguished subset of the state space}
\label{sec:disting}

In this section we describe a   model that  generalizes the strip model described
in Section \ref{sec:strips} (see Section \ref{strip1} for details of the relationship), and whose study can, in important aspects, be reduced
to the study of the one-dimensional model of
 Section \ref{sec:results}. For this section, unlike Section \ref{sec:strips},  we do not
assume the Markov property.

We consider a stochastic process $(Y_t)_{t \in \Z^+}$ adapted
to a filtration $(\G_t)_{t \in \Z^+}$ and taking values in a
subset $\SS$ of $\R$ with $\sup \SS = +\infty$ and $\inf \SS = -\infty$.
We assume that there is a distinguished subset  $\CC \subset \SS$ of the state space.
Roughly speaking, the process will jump out of the set $\CC$ with heavier tails
than in the remainder of the state space. For convenience we assume $0 \in \CC$ and $Y_0 = 0$ a.s., although this is inessential for our results.

Define
$\sigma_0 := 0$ and, for $n \in \N$, $\sigma_n := \min \{ t > \sigma_{n-1} : Y_t \in \CC\}$.
We assume that  $\SS$ and   $\CC$ are sufficiently regular
 that the $\sigma_n$ are   stopping times:
\begin{itemize}
\item[(C1)] Suppose that for all $n$, $\sigma_n$ is a $(\G_t)_{t\in\Z^+}$ stopping time, and $\Pr [ \sigma_{n+1} < \infty \mid \G_{\sigma_n} ] =1$.
\end{itemize}
If $\SS$ is countable, then the stopping-time property in (C1) holds automatically with $\G_n = \sigma (Y_0, Y_1, \ldots, Y_n)$
the natural filtration; in more generality, it suffices that   $\CC$ be a measurable set,
see e.g.\ \cite[Lemma 7.6]{kall}. In (C1) we make the further assumption that the $\sigma_n$ are all finite, which amounts to
a notion of   {\em recurrence} for $\CC$.

For $n \in \Z^+$, take $\nu_n := \sigma_{n+1} - \sigma_{n}$, so that $\nu_0 = \sigma_1$ is the first passage
time into $\CC$ and $\nu_1, \nu_2, \ldots$ are the durations of the subsequent excursions from
$\CC$. Note that since $\nu_n \geq 1$, $\sigma_n \geq n$ and $\sigma_n$ is increasing in $n$, so in particular
$\sigma_n \to \infty$ as $n \to \infty$. Assumption (C1) implies that   $\nu_n< \infty$ for all $n$, a.s.

Write $D_t := Y_{t+1} - Y_t$ for the increments
 of $Y_t$. Our first result covers the case where the average duration of the excursions from $\CC$ is uniformly
 finite. We assume:

\begin{itemize}
\item[(C2)] Suppose that there exists $B < \infty$ such that  $\Exp [ \nu_n \mid \G_{\sigma_n} ] \leq B$, a.s., for all $n$.
\end{itemize}

\begin{theo}
\label{thm10}
Suppose that (C1) and (C2) hold.
Suppose that there exist $\alpha \in (0,1)$, $\beta > \alpha$,   and $C < \infty$ so that: (i)
  $\Exp [ (D_t^-)^\beta \mid \G_t ] \leq C$ a.s.; (ii)  on $\{ Y_t \in \CC\}$, uniformly in $t$ and $\omega$,
\begin{equation}
\label{bigjump}
 \lim_{x \to \infty} \frac{ \log \Pr [ D_t^+ > x \mid \G_t ]}{\log x } = - \alpha, \as; \end{equation}
and (iii) on $\{ Y_t \notin \CC \}$,  $\Exp [ ( D_t^+) ^\beta \mid \G_t ] \leq C$ a.s.
Then
$Y_t \to +\infty$ a.s., and, moreover,
\[ \lim_{t \to \infty} \frac{\log Y_t}{\log t} = \frac{1 }{\alpha}, \as \]
\end{theo}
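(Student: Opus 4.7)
The plan is to reduce the theorem to the one-dimensional framework of Section~\ref{sec:results} by embedding at the return times to $\CC$. Set $X_n := Y_{\sigma_n}$ with filtration $\F_n := \G_{\sigma_n}$, so that $\Delta_n := X_{n+1} - X_n = \sum_{t=\sigma_n}^{\sigma_{n+1}-1} D_t$ decomposes into a single heavy first jump $D_{\sigma_n}$ (starting from $\CC$) and $\nu_n - 1$ subsequent jumps starting from outside $\CC$. I will verify the hypotheses of Corollary~\ref{cor2} for $(X_n)$, conclude $X_n \to +\infty$ and $\log X_n / \log n \to 1/\alpha$ a.s., and then interpolate to $Y_t$ using (C2) together with excursion moment bounds.

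Fix $\alpha' \in (\alpha, \min(1,\beta)]$, which is non-empty because $\beta > \alpha$ and $\alpha < 1$. Subadditivity of $x \mapsto x^{\alpha'}$ on $[0,\infty)$, conditional Jensen against hypothesis (i), and iterated expectations using (C2) give
\[ \Exp\bigl[(\Delta_n^-)^{\alpha'} \mid \F_n\bigr] \leq \Exp\Bigl[\sum_{t=\sigma_n}^{\sigma_{n+1}-1} (D_t^-)^{\alpha'} \Bigm| \F_n\Bigr] \leq C^{\alpha'/\beta} B ,\]
verifying (\ref{betacon}) for $(X_n)$ with $\alpha'$ in place of $\beta$. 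The analogous bound, using hypothesis (iii), applies to $\sum_{\sigma_n < t < \sigma_{n+1}} D_t^+$ and produces the same $\alpha'$-moment control. Writing $\Delta_n^+ \leq D_{\sigma_n}^+ + \sum_{\sigma_n < t < \sigma_{n+1}} D_t^+$ and combining (\ref{bigjump}) with Markov on the second term (which is $o(x^{-\alpha+\eps})$ since $\alpha' > \alpha$) yields the upper bound $\Pr[\Delta_n^+ > x \mid \F_n] \leq x^{-\alpha+\eps}$ for every $\eps > 0$ and $x$ large; for the matching lower bound, the event $\{D_{\sigma_n}^+ > 2x\} \cap \{\sum_{\sigma_n < t < \sigma_{n+1}} D_t^- \leq x\}$ forces $\Delta_n^+ > x$, so (\ref{bigjump}) together with another Markov estimate gives $\Pr[\Delta_n^+ > x \mid \F_n] \geq x^{-\alpha-\eps}$. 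The hypotheses of Corollary~\ref{cor2} are thus met uniformly in $n$ and $\omega$, giving $\log X_n / \log n \to 1/\alpha$ a.s.\ and, in particular, $X_n \to +\infty$.

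To pass from $(X_n)$ to $(Y_t)$ I need two further ingredients. First, $\log \sigma_n / \log n \to 1$ a.s.: Markov and (C2) give $\Pr[\sigma_n > n^{1+\eps}] \leq B n^{-\eps}$, which together with Borel--Cantelli along a polynomially growing subsequence and the monotonicity of $\sigma_n$ upgrades to $\sigma_n \leq n^{1+\eps}$ a.s.\ eventually. Second, within each excursion $[\sigma_n,\sigma_{n+1})$ the deviation of $Y_t$ from $X_n$ is dominated by the excursion sums $\sum_{t=\sigma_n}^{\sigma_{n+1}-1} D_t^\pm$, both of which have bounded $\alpha'$-moment by the argument above, hence are $O(n^{1/\alpha - \delta})$ a.s.\ eventually via Borel--Cantelli (choose $\delta>0$ so that $(1/\alpha-\delta)\alpha' > 1$). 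The main obstacle is the isolated heavy jump $D_{\sigma_n}^+$: its $\alpha$-tail is not directly summable against any power $n^{1/\alpha+\eps}$, so I handle it indirectly via the identity-based bound $D_{\sigma_n}^+ \leq \Delta_n^+ + \sum_{\sigma_n < t < \sigma_{n+1}} D_t^-$, which controls it by $X_{n+1} + O(n^{1/\alpha-\delta})$ once $X_n \geq 0$. Combining $X_n = n^{1/\alpha + o(1)}$ with $\sigma_n = n^{1+o(1)}$ and these oscillation bounds yields $Y_t = t^{1/\alpha + o(1)}$ and $Y_t \to +\infty$ a.s.
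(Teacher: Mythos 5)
Your proof is correct and follows the same overall route as the paper's: embed at the return times $\sigma_n$, verify the one-dimensional hypotheses for $X_n = Y_{\sigma_n}$, invoke the results of Section~\ref{sec:results}, and interpolate back to $Y_t$. The differences are technical rather than structural, and mostly in your favour. Where the paper establishes the tail bounds on $\Delta_n^\pm$ via its dedicated Lemmas~\ref{up1}--\ref{low1}, which in turn rest on the maximal inequality Lemma~\ref{maxlem}, you instead use subadditivity of $x \mapsto x^{\alpha'}$ on $[0,\infty)$ together with a Wald-type iterated-expectation bound against (C2); this is shorter and more elementary, and reaches the same conclusion (conditional $\alpha'$-moments for the excursion contributions, for any $\alpha' \in (\alpha, \beta\wedge 1]$). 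For the clock $\sigma_n$, the paper routes through its Lemmas~\ref{times} and~\ref{visits} (which themselves invoke Theorems~\ref{cor1} and~\ref{speed}), while you use a direct Markov plus Borel--Cantelli argument along a polynomial subsequence; again equivalent. Finally, your handling of the oscillations within an excursion differs slightly: the paper controls $\sum_s |D_s|\,\1\{Y_s\notin\CC\}$ by another application of Theorem~\ref{cor1}, while you observe the key identity $D_{\sigma_n}^+ \le \Delta_n^+ + \sum_{\sigma_n< t<\sigma_{n+1}} D_t^-$ to control the unbounded first jump by quantities already tamed. One small imprecision worth fixing in the write-up: the sentence claiming that ``both'' excursion sums $\sum_{t=\sigma_n}^{\sigma_{n+1}-1} D_t^\pm$ have bounded $\alpha'$-moment is literally false for the $+$ sum (it contains $D_{\sigma_n}^+$, whose $\alpha'$-moment is infinite since $\alpha'>\alpha$); you of course correct this in the very next sentence by singling out the heavy first jump, but it would be cleaner to state from the outset that the $\alpha'$-moment bound applies only to the sums over $\sigma_n < t < \sigma_{n+1}$.
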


In the case where the $\nu_n$ may not have a finite mean, we need
to impose a mild additional regularity condition on the tails of $\nu_n$. Specifically, we
assume:

\begin{itemize}
\item[(C3)] Suppose that for some $\gamma \in (0,1]$, uniformly in $n$ and $\omega$,
\[ \lim_{t \to \infty} \frac{ \log \Pr [ \nu_n > t \mid \G_{\sigma_n} ]}{\log t} = - \gamma , \as  \]
\end{itemize}

The next result gives conditions for the influence of $\CC$
to dominate.

\begin{theo}
\label{thm11}
Suppose that (C1) and (C3) hold.
Suppose that there exist $\alpha \in (0,1)$, $\beta >0$,   and $C < \infty$ such that: (i)
  $\Exp [ (D_t^-)^\beta \mid \G_t ] \leq C$ a.s.; (ii)  on $\{ Y_t \in \CC\}$, (\ref{bigjump}) holds;
and (iii) on $\{ Y_t \notin \CC \}$,  $\Exp [ ( D_t^+) ^\beta \mid \G_t ] \leq C$ a.s.
Then if $\alpha < \gamma ( \beta \wedge 1 )$,
$Y_t \to +\infty$ a.s., and
\[ \lim_{t \to \infty} \frac{\log Y_t}{\log t} = \frac{\gamma }{\alpha}, \as \]
\end{theo}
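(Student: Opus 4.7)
The plan is to reduce to the one-dimensional results of Section \ref{sec:results} via the embedded process $X_n := Y_{\sigma_n}$ sampled at returns to $\CC$, with filtration $\F_n := \G_{\sigma_n}$. Decompose each increment as $X_{n+1} - X_n = J_n + E_n$, where $J_n := D_{\sigma_n}$ is the jump out of $\CC$ and $E_n := \sum_{s=\sigma_n+1}^{\sigma_{n+1}-1} D_s$ is the subsequent excursion displacement, so that $X_n = A_n + B_n$ with $A_n := \sum_{k=0}^{n-1} J_k$ and $B_n := \sum_{k=0}^{n-1} E_k$.

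First I would analyse the ``big-jump'' part $A_n$ via Corollary \ref{cor2}. Condition (i) of the theorem gives the conditional $\beta$-moment bound \eqref{betacon} for $J_k^-$, and condition (ii) gives the uniform logarithmic-tail hypothesis on $J_k^+$ with exponent $\alpha$; since $\alpha < \gamma(\beta \wedge 1) \leq \beta$, the corollary applies, yielding $A_n \to +\infty$ a.s.\ and $(\log A_n)/\log n \to 1/\alpha$ a.s. In particular, via the Theorem \ref{speed} step in the proof of the corollary one obtains the almost-sure lower bound $A_n \geq n^{1/\alpha - \eps}$ for all but finitely many $n$, for any $\eps > 0$.

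Next I would show that the excursion part $B_n$ is of strictly smaller polynomial order. Observe that $B_n = Z_{\sigma_n}$, where $Z_t := \sum_{s=0}^{t-1} D_s \1\{Y_s \notin \CC\}$ is an adapted process whose increments, by (i) and (iii), satisfy $\Exp [ (|D_t| \1\{Y_t \notin \CC\})^\beta \mid \G_t] \leq 2^\beta C$ a.s. Markov's inequality converts this into an $x^{-\beta}$ tail bound on the increments, so two applications of Theorem \ref{cor1} (to $Z$ and to $-Z$, with $\theta = \beta \wedge 1$) yield $|Z_t| \leq t^{1/(\beta \wedge 1) + \eps}$ for all but finitely many $t$, a.s., for any $\eps > 0$. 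Separately, a Markov-plus-Borel--Cantelli argument using subadditivity along dyadic subsequences, applied to the conditional tail bound (C3), yields the polynomial two-sided bounds $n^{1/\gamma - \eps} \leq \sigma_n \leq n^{1/\gamma + \eps}$ for all but finitely many $n$, a.s. Combining,
\[ |B_n| \leq \sigma_n^{1/(\beta \wedge 1) + \eps} \leq n^{1/(\gamma(\beta \wedge 1)) + O(\eps)} , \]
which under the hypothesis $\alpha < \gamma(\beta \wedge 1)$ is of polynomial order strictly smaller than $n^{1/\alpha - \eps}$ for $\eps$ small enough. Therefore $X_n \to +\infty$ a.s.\ and $(\log X_n)/\log n \to 1/\alpha$ a.s.

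To pass from $X_n$ to $Y_t$, fix $t$ and let $n = n(t)$ satisfy $\sigma_n \leq t < \sigma_{n+1}$; then $Y_t - X_n$ decomposes as an initial jump $J_n \1\{t > \sigma_n\}$ plus a partial-excursion term $Z_t - Z_{\sigma_n}$. A Borel--Cantelli argument on the $\alpha$-tail of $J_n$ gives $|J_n| \leq n^{1/\alpha + \eps}$ eventually, while the $Z$-piece is already controlled above. Inverting the two-sided bound on $\sigma_n$ yields $n(t) = t^{\gamma + o(1)}$ a.s., and combining all of these gives $(\log Y_t)/\log t \to \gamma/\alpha$ a.s., as required. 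The main obstacle will be the bookkeeping in the last two steps: simultaneously balancing the polynomial error terms from $\sigma_n$, $B_n$, and the within-excursion fluctuations against the leading $n^{1/\alpha}$ growth of $A_n$, and verifying that the strict inequality $\alpha < \gamma(\beta \wedge 1)$ is used exactly where needed.
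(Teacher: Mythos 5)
Your proof is correct, and the underlying analytic ingredients are identical to the paper's: the almost-sure bounds of Theorems \ref{cor1} and \ref{speed} applied to the sequence of jumps out of $\CC$ and to the sums of increments outside $\CC$, the relation $\sigma_n = n^{1/\gamma+o_\omega(1)}$ (the paper's Lemma \ref{times}, which you re-derive ad hoc), and its inversion $N(t) = t^{\gamma+o_\omega(1)}$ (Lemma \ref{visits}). The organization, however, is genuinely different. The paper proves Theorem \ref{thm11} by working directly with $Y_t$ and writing the one-sided inequalities (\ref{eq50}) and (\ref{eq53}), bounding $\sum_{m\le N(t)} D^+_{\sigma_m}$ against $\sum_{s\le t} D_s^\pm\1\{Y_s\notin\CC\}$; it deliberately avoids the embedded process because the tail of $\Delta_n := Y_{\sigma_{n+1}}-Y_{\sigma_n}$ cannot be controlled without (C2) (that is where Lemmas \ref{up1}--\ref{low1} are used in the proof of Theorem \ref{thm10}, and they require $\Exp[\nu_n\mid\G_{\sigma_n}]\le B$). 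You instead route through the embedded process $X_n = Y_{\sigma_n}$ anyway, but sidestep the obstruction by the \emph{exact} decomposition $X_n = A_n + B_n$: since $A_n$ is a sum of jumps whose conditional tails are controlled directly by (i)--(ii), and $B_n = Z_{\sigma_n}$ with $Z$ an adapted sum of in-excursion increments, you never need tail control of $\Delta_n$ itself. This is a nice observation that unifies the treatments of Theorems \ref{thm10} and \ref{thm11}. The cost is the extra bookkeeping of step four, passing from $X_{n(t)}$ to $Y_t$; the paper's inequalities avoid that step entirely. Two small points to tighten in a final write-up: (a) substituting the random time $\sigma_n$ into the a.s.\ conditional tail bound (\ref{bigjump}) should be justified explicitly by the identity $\Pr[D_{\sigma_n}^+>x\mid\G_{\sigma_n}] = \sum_t \1\{\sigma_n=t\}\Pr[D_t^+>x\mid\G_t]$, valid because $\sigma_n$ is an a.s.\ finite $\Z^+$-valued stopping time; (b) in your final step, $J_{n(t)}^+$ is of the \emph{same} polynomial order $t^{\gamma/\alpha+O(\eps)}$ as $X_{n(t)}$, not smaller, so for the upper bound one absorbs it into the leading term rather than treating it as an error term, while for the lower bound only $J_{n(t)}^-$ (which has the lighter $\beta$-tail) matters.
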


The next result gives conditions for the influence of $\SS \setminus \CC$ to dominate.

\begin{theo}
\label{thm12}
Suppose that (C1) and (C3) hold and that $\gamma \in (0,1)$.
Suppose that there exist $\alpha, \beta  \in (0,1)$, $\delta >0$,  and $C < \infty$ such that: (i)
 on $\{ Y_t \in \CC\}$,  $\Exp [  | D_t |  ^\alpha \mid \G_t ] \leq C$ a.s.;
  (ii) on $\{ Y_t \notin \CC \}$, uniformly in $t$ and $\omega$,
\[ \lim_{x \to \infty} \frac{ \log \Pr [ D_t^- > x \mid \G_t ]}{\log x } = - \beta, \as  ;\]
and (iii) on $\{ Y_t \notin \CC \}$, $\Exp [ (D_t^+)^{\beta + \delta} \mid \G_t ] \leq C$ a.s.
Then if $\alpha > \gamma   \beta $,
$Y_t \to -\infty$ a.s., and
\[ \lim_{t \to \infty} \frac{\log | Y_t |}{\log t} = \frac{1}{\beta}, \as \]
\end{theo}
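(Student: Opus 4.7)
\emph{Proof plan.} My approach is to study the embedded skeleton $\tilde Y_n := Y_{\sigma_n}$, apply the one-dimensional results of Section~\ref{sec:results} to $-\tilde Y_n$, and then transfer the conclusion back to $Y_t$ via a time change and a short argument controlling the intra-excursion oscillation. The heuristic is that, during an excursion from $\CC$, the bulk increments have negative tail of exponent $\beta$ while the excursion length $\nu_n$ has tail exponent $\gamma$; combining these should give the embedded one-step drop $\tilde D_n := \tilde Y_{n+1}-\tilde Y_n$ a negative tail of exponent $\beta\gamma$. The hypothesis $\alpha>\gamma\beta$ ensures that this excursion contribution dominates the at-most-$\alpha$-heavy contribution from the single boundary increment $D_{\sigma_n}$.

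Writing $\tilde D_n = D_{\sigma_n} + \sum_{t=\sigma_n+1}^{\sigma_{n+1}-1}D_t$, I would first establish two estimates. A first-large-jump analysis within the excursion, using (ii) and the elementary fact that (C3) with $\gamma<1$ implies $\Exp[1\wedge \nu_n x^{-\beta}\mid \G_{\sigma_n}] \geq c' x^{-\beta\gamma}$ for $x$ large, yields a lower tail bound
\[
\Pr[\tilde D_n^- > x\mid \G_{\sigma_n}]\geq c\, x^{-\beta\gamma}, \qquad x \geq x_0, \as
\]
For the opposite direction, using subadditivity $(a+b)^q\leq a^q+b^q$ valid for $q\in(0,1)$, Markov's inequality via (i) and (iii), and a truncation of $\nu_n$ at scale $x^{1/\gamma}$, I would show that for some $q\in(\beta\gamma,\min\{\alpha,(\beta+\delta)\gamma\})$ (a non-empty range by the hypotheses $\alpha>\gamma\beta$ and $\delta>0$),
\[
\Exp[(\tilde D_n^+)^q\mid \G_{\sigma_n}]\leq C', \as
\]
Theorem~\ref{trans} and Corollary~\ref{cor2} applied to $X_n:=-\tilde Y_n$ with parameters $(\beta\gamma,q)$ playing the roles of $(\alpha,\beta)$ then give $\tilde Y_n\to -\infty$ and $\log|\tilde Y_n|/\log n\to 1/(\beta\gamma)$, a.s.

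To translate to the $Y_t$-clock, observe that $\sigma_n=\sum_{k=0}^{n-1}\nu_k$ is a sum of non-negative variables whose increments satisfy a common tail estimate with exponent $\gamma\in(0,1)$ by (C3); Theorems~\ref{cor1} and~\ref{speed} applied to $(\sigma_n)_n$ give $\log\sigma_n/\log n\to 1/\gamma$ a.s. Combining, $\log|Y_{\sigma_n}|/\log\sigma_n \to 1/\beta$ a.s. The remaining step is to control the intra-excursion oscillation $M_n:=\max_{\sigma_n\leq s<\sigma_{n+1}}|Y_s-Y_{\sigma_n}|$, using the same tail/moment estimates as in the skeleton analysis; since the positive deviation is dominated by $D_{\sigma_n}^+ + \sum D_s^+$, whose increments have the strictly lighter tail exponent $\beta+\delta$, $M_n$ has a strictly smaller logarithmic growth rate than $|\tilde Y_n|$, which propagates $\tilde Y_n \to -\infty$ and the correct rate to all $t$.

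The principal obstacle is the lower tail bound for $\tilde D_n^-$: one must rule out the possibility that the tail exponent $\beta\gamma$ of the largest negative excursion-jump is degraded by the cumulative positive increments occurring during the same excursion. The resolution again rests on the strict separation of tail exponents in (i)--(iii), which forces positive compensations within an excursion to be of strictly lower order than a single dominant negative jump on the scales relevant to $\tilde D_n^-$.
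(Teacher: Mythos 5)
Your route via the embedded skeleton $\tilde Y_n := Y_{\sigma_n}$ is precisely the one the paper takes for Theorem~\ref{thm10}, where the positive-recurrence hypothesis (C2), $\Exp[\nu_n\mid\G_{\sigma_n}]\leq B$, lets one control the increments of the skeleton via Lemmas~\ref{up1} and~\ref{low1}. For Theorems~\ref{thm11} and~\ref{thm12} the paper abandons the skeleton and works directly on the original time-clock, and there is a genuine reason for the switch which your sketch does not resolve.

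The problematic step is the claim $\Exp[(\tilde D_n^+)^q\mid\G_{\sigma_n}]\leq C'$ for some $q>\beta\gamma$. Under (C3) with $\gamma<1$ one has $\Exp[\nu_n\mid\G_{\sigma_n}]=\infty$, so Lemma~\ref{up1} (whose proof rests on Lemma~\ref{maxlem} and crucially needs $\Exp[\nu_n]<\infty$) is unavailable. The truncation argument you describe --- split on $\{\nu_n\leq m\}$, apply subadditivity and Markov, then optimise $m$ --- gives, writing $p:=(\beta+\delta)\wedge 1$, only
\[
\Pr[\tilde D_n^+>x\mid\G_{\sigma_n}]\leq C\,m\,x^{-p}+c\,m^{-\gamma},
\]
and optimising $m\asymp x^{p/(1+\gamma)}$ yields a tail exponent $p\gamma/(1+\gamma)$, not the heuristic $p\gamma$. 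Since $\delta>0$ may be arbitrarily small, when $\delta\leq\beta\gamma$ one has $p\gamma/(1+\gamma)\leq\beta\gamma$, so no admissible $q>\beta\gamma$ is reached and the hypothesis (\ref{betacon}) of Theorem~\ref{speed} cannot be verified for $X_n:=-\tilde Y_n$. The same loss afflicts the matching \emph{upper} tail bound for $\tilde D_n^-$ that Corollary~\ref{cor2} would also need, so even the sharp growth rate $\log|\tilde Y_n|/\log n\to 1/(\beta\gamma)$ is not available by this route. Obtaining the correct exponent for a random sum indexed by a heavy-tailed stopping time is a genuine difficulty, not a routine truncation.

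The paper sidesteps the issue by never aggregating the excursion into a single increment: it bounds $Y_t$ directly by the $t$-indexed sums
\[
Y_t\leq Y_0+\sum_{m=0}^{N(t)}D^+_{\sigma_m}+\sum_{s\leq t}D^+_s\1\{Y_s\notin\CC\}-\sum_{s<t}D^-_s\1\{Y_s\notin\CC\},
\]
together with the mirror-image lower bound. Theorems~\ref{cor1} and~\ref{speed} are applied to these partial sums whose increments satisfy the hypotheses \emph{uniformly in the original clock}, and the excursion structure enters only through the count $N(t)$ via Lemma~\ref{visits}. The comparison of exponents then requires only $(\beta+\delta)\wedge 1>\beta$, which holds for any $\delta>0$. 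You should adopt this direct decomposition for the null-recurrent case; as written, your proposal works only in the regime $\delta>\beta\gamma$ (and would also require a separate argument for the upper tail of $\tilde D_n^-$), which is strictly narrower than the theorem's hypotheses.
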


\section{Proofs for Section \ref{sec:results}}
\label{sec:proofs}

\subsection{Overview}

Our proofs are based on some semimartingale
(or Lyapunov function) ideas. That is, for appropriate choices
of Lyapunov function $f: \R \to [0,\infty)$ we study the process $f(X_t)$;
typically we require that $f(X_t)$ satisfy variations of
Foster--Lyapunov style drift
conditions. The Lyapunov functions that we study are of two basic kinds: either
$f(x) \to 0$ or $f(x) \to \infty$ as $x \to \pm \infty$. These functions allow us to
study different properties of the process $X_t$. The technical details of the proofs
consist of two main components: first proving that $f(X_t)$ satisfies a suitable
drift condition, and then using semimartingale ideas to extract information about the
asymptotic behaviour of $X_t$ itself. For example, if $f(X_t)$ satisfies a local
submartingale/supermartingale condition, we can estimate hitting probabilities for $X_t$
via stopping-time arguments. Verification of drift conditions for $f(X_t)$ usually
entails some Taylor's formula expansions as well as some careful truncation ideas  to deal with the heavy tails.

The remainder of this section is arranged as follows. In Section \ref{sec:prelim}
we give some fundamental semimartingale results that will form part of our toolbox, largely
taken from \cite{mvw,aim}. In Section \ref{sec:tech} we introduce our Lyapunov functions
and, in a series of lemmas, undertake the technical estimates that we   need to
apply our semimartingale methods. Finally, in Section \ref{sec:proofs2} we complete the
proofs of the theorems.

\subsection{Preliminaries}
\label{sec:prelim}

In this section we state some useful results from the literature that we
will need.
We will use the following result on existence of passage-time moments for one-dimensional
stochastic processes, which is a direct
 consequence of Theorem 1 of \cite{aim}.

\begin{lm}
\label{aimlem}
Let $(Z_t)_{t \in \Z^+}$ be an $(\F_t)_{t \in \Z^+}$-adapted process on $[0,\infty)$.
For $z >0$, let $\sigma_z := \min \{ t \in \Z^+ : Z_t \leq z \}$.
Suppose that there exist $C \in (0,\infty)$ and $\eta \in [0,1)$
for which
\[ \Exp [ Z_{t+1} - Z_t \mid \F_t ] \leq - C Z_t^\eta , \as, \]
on $\{ t < \sigma_z \}$.
Then for any $p \in [0, 1/(1-\eta)]$, $\Exp [ \sigma_z^p  ] < \infty$.
 \end{lm}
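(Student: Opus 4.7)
The plan is to apply Theorem~1 of \cite{aim} essentially verbatim. That theorem is a general Foster--Lyapunov-type result giving finite moments of first-passage times for non-negative $(\F_t)_{t\in\Z^+}$-adapted processes satisfying a negative-drift condition of the form appearing in our hypothesis, so there is really no independent semimartingale argument to carry out here; the work has already been done in \cite{aim}, and our task is simply to translate.

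First I would match the notation of our statement with that of \cite[Theorem~1]{aim}: identify the abstract non-negative process considered there with our $Z_t$, identify their drift exponent with $\eta$, and identify their passage level with $z$. Since $Z_t \ge 0$ and $(Z_t)$ is $(\F_t)$-adapted by assumption, and since the drift bound $\Exp[Z_{t+1}-Z_t\mid\F_t] \le -CZ_t^\eta$ is given on $\{t<\sigma_z\}$, all the hypotheses required to invoke \cite[Theorem~1]{aim} are supplied. In particular, the drift bound itself forces $\Exp[(Z_{t+1}-Z_t)^-\mid\F_t]$ to be finite and sufficiently large to dominate the positive part, so no separate integrability condition on the increments needs to be checked beyond what the stated inequality already provides.

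Second, I would read off the conclusion. For drift exponent $\eta\in[0,1)$, \cite[Theorem~1]{aim} yields $\Exp[\sigma_z^p]<\infty$ for every $p$ in the range $[0,1/(1-\eta)]$, which is exactly our claim. Here the exponent $1/(1-\eta)$ is the characteristic feature of the ``polynomial-drift'' regime: larger $\eta$ (drift vanishing less quickly as $Z_t$ grows) means more negative push on the process and hence higher finite moments of the hitting time.

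The main (and really only) obstacle is a careful-reading issue: confirming that the statement in \cite[Theorem~1]{aim} includes the \emph{endpoint} $p = 1/(1-\eta)$ and not merely the open range $p < 1/(1-\eta)$. If only the open range is literally supplied there, one would either need to tweak the hypothesis here, or supply a short supplementary argument at the endpoint using an $\eps$-argument together with the trivial monotonicity $\Exp[\sigma_z^p] \leq 1 + \Exp[\sigma_z^{p'}]$ for $p\le p'$. We do not expect any genuine conceptual work to be required beyond invoking \cite{aim}.
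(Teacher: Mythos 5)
Your approach matches the paper's exactly: the paper provides no proof of this lemma either, merely noting that it is a direct consequence of \cite[Theorem~1]{aim}, so a careful notation match is all that is required. Your concern about whether the closed endpoint $p = 1/(1-\eta)$ is included is sensible due diligence but is covered by the cited theorem (take their $p_0 = 1$ and identify their drift exponent with $\eta$); in any case, every application of Lemma~\ref{aimlem} in this paper only invokes the open range $p < 1/(1-\eta)$ (see, e.g., the proof of Theorem~\ref{mom2}).
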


 The next result is contained in Theorem 3.2 of
\cite{mvw}.

\begin{lm}
\label{mvwlem}
Let $(Z_t)_{t \in \Z^+}$ be an $(\F_t)_{t \in \Z^+}$-adapted process
 on $[0,\infty)$. Suppose that for some $B < \infty$,
$\Exp[ Z_{t+1} - Z_t \mid \F_t ] \leq B$, a.s. Then  for any $\eps>0$, a.s., for all but finitely many $t \in \Z^+$,
\[ \max_{0 \leq s \leq t} Z_s \leq t (\log t)^{1+\eps} .\]
\end{lm}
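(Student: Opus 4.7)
The plan is to apply Doob's maximal inequality to a suitable non-negative supermartingale, then pass from a dyadic subsequence to all $t$ via Borel--Cantelli and monotonicity of the running maximum.

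First, for each $N \in \N$, I would define the drift-corrected process $W^{(N)}_t := Z_t + B(N-t)$ for $0 \leq t \leq N$. Since $Z_t \geq 0$ and $N - t \geq 0$, one has $W^{(N)}_t \geq 0$, and the hypothesis $\Exp[Z_{t+1} - Z_t \mid \F_t] \leq B$ gives $\Exp[W^{(N)}_{t+1} \mid \F_t] \leq W^{(N)}_t$, so $W^{(N)}$ is a non-negative $(\F_t)$-supermartingale on $\{0, 1, \ldots, N\}$. Doob's maximal inequality, applied conditionally on $\F_0$, then yields
\[
\Pr\bigl[\,\max_{0 \leq t \leq N} Z_t \geq \lambda \,\big|\, \F_0\bigr] \leq \Pr\bigl[\,\max_{0 \leq t \leq N} W^{(N)}_t \geq \lambda \,\big|\, \F_0\bigr] \leq \frac{Z_0 + BN}{\lambda}.
\]

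Next, fix $\eps>0$, set $\eps' := \eps/2$, and specialize to $N_k := 2^k$ and $\lambda_k := 2^k (k \log 2)^{1+\eps'}$. The bound above then becomes $(Z_0 \cdot 2^{-k} + B)/(k \log 2)^{1+\eps'}$, which is summable in $k$ on the full-probability event $\{Z_0 < \infty\}$. Conditional Borel--Cantelli therefore gives, a.s.\ for all but finitely many $k$,
\[
\max_{0 \leq t \leq 2^k} Z_t < 2^k (k \log 2)^{1+\eps'}.
\]

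Finally, for any $t \in [2^k, 2^{k+1})$, monotonicity of $s \mapsto \max_{0 \leq u \leq s} Z_u$ yields $\max_{0 \leq s \leq t} Z_s \leq 2^{k+1}((k+1)\log 2)^{1+\eps'}$, while the target satisfies $t(\log t)^{1+\eps} \geq 2^k (k \log 2)^{1+\eps}$. Comparing these, the desired inequality reduces to the deterministic statement $2((k+1)/k)^{1+\eps'} \leq (k \log 2)^{\eps - \eps'}$, which holds for all sufficiently large $k$ because $\eps>\eps'$ makes the right-hand side diverge. The only real bookkeeping is this last interpolation step; the supermartingale construction and Doob estimate are routine, and the whole argument is driven by the single idea that adding the linear correction $B(N-t)$ turns the weak drift condition into genuine supermartingale behaviour with a controlled initial value.
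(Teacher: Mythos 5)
Your proof is correct. The paper simply cites Theorem~3.2 of \cite{mvw} and gives no proof of its own, so there is nothing to match verbatim; but your argument is closely parallel to the paper's own Lemma~\ref{maxlem} (which is stated for a random stopping time $\nu$ and specializes to $\nu=t$): both compensate the bounded drift so as to make Doob's inequality bite, and then the $t(\log t)^{1+\eps}$ threshold is delivered by a dyadic Borel--Cantelli argument. The only real difference is the choice of compensator: the paper's Lemma~\ref{maxlem} adds $\sum_{s<t} A_s^-$ with $A_s=\Exp[Z_{s+1}-Z_s\mid\F_s]$ to form a nonnegative \emph{submartingale} and invokes Doob's submartingale inequality, whereas you add the deterministic $B(N-t)$ to form a nonnegative \emph{supermartingale} on a fixed horizon $\{0,\dots,N\}$. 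Yours is slightly more elementary (no need to track the conditional drifts $A_s$, only the uniform bound $B$); the paper's version copes naturally with a random horizon, which it needs elsewhere. Your conditioning on $\F_0$ to avoid assuming $\Exp[Z_0]<\infty$ is a sensible refinement that the paper's Lemma~\ref{maxlem} does not make.

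Two small points worth inserting for completeness: (1) you should replace $B$ by $B^+ = \max(B,0)$ at the outset, since nonnegativity of $W^{(N)}_t = Z_t + B(N-t)$ requires $B\ge 0$; this loses nothing because the hypothesis with $B$ implies it with $B^+$ (and if $B\le 0$ the statement is in any case trivial, $Z$ then being a nonnegative supermartingale, hence a.s.\ bounded). (2) The conditional Doob bound $\Pr[\max_{0\le t\le N}W^{(N)}_t\ge\lambda\mid\F_0]\le W^{(N)}_0/\lambda$ follows from optional stopping at $\tau=\min\{t\le N: W^{(N)}_t\ge\lambda\}\wedge N$ together with $W^{(N)}\ge 0$; you assert it without comment, which is fine, but flagging the stopping argument makes it self-contained.
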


 Finally, we give a maximal inequality that generalizes Lemma 3.1 of \cite{mvw},
   which covered the case where $\nu$ is a fixed, deterministic time.

\begin{lm}
\label{maxlem}
Let $(Z_t)_{t \in \Z^+}$ be an $(\F_t)_{t \in \Z^+}$-adapted process
 on $[0,\infty)$, and let $\nu$ be an $(\F_t)_{t \in \Z^+}$ stopping time.
 Suppose that for some $B < \infty$, on $\{ t < \nu \}$, a.s.,
$\Exp[ Z_{t+1} - Z_t \mid \F_t ] \leq B$.
Then for any $x>0$,
\begin{equation}
\label{maxineq}
\Pr \left[ \max_{0 \leq s \leq \nu} Z_s \geq x   \right] \leq \frac{B \Exp [ \nu   ] + \Exp [ Z_0]}{x} .\end{equation}
\end{lm}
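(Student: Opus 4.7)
The plan is to reduce the maximal inequality to an application of optional stopping for a suitable supermartingale, following a standard strategy that generalises the deterministic-time case of \cite{mvw}. Without loss of generality assume $\Exp[\nu] < \infty$, since otherwise the right-hand side of (\ref{maxineq}) is infinite and the bound is vacuous; in particular this makes $\nu$ a.s.\ finite.

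First I would introduce the stopped process $\widetilde Z_t := Z_{t \wedge \nu}$ and show that
\[
M_t := \widetilde Z_t - B ( t \wedge \nu )
\]
is a non-negative supermartingale. The point is that although the drift bound only holds on $\{t < \nu\}$, this is exactly the event on which $\widetilde Z_{t+1} \neq \widetilde Z_t$ can occur: since $\{t < \nu\} = \{\nu \geq t+1\} \in \F_t$ (as $\nu$ is a stopping time), one has
\[
\Exp[\widetilde Z_{t+1} - \widetilde Z_t \mid \F_t] = \1\{t < \nu\} \, \Exp[Z_{t+1} - Z_t \mid \F_t] \leq B \, \1\{t < \nu\} ,
\]
and noting $(t+1) \wedge \nu - t \wedge \nu = \1\{t < \nu\}$ yields $\Exp[M_{t+1} - M_t \mid \F_t] \leq 0$.

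Next I would introduce the hitting time $\tau_x := \min\{t \in \Z^+ : Z_t \geq x\}$, so that $\{\max_{0 \leq s \leq \nu} Z_s \geq x\} = \{\tau_x \leq \nu\}$. Applying optional stopping to the supermartingale $M_t$ at the bounded stopping time $\tau_x \wedge n$ gives
\[
\Exp [ Z_{\tau_x \wedge n \wedge \nu} ] - B \, \Exp [ \tau_x \wedge n \wedge \nu ] \leq \Exp[M_0] = \Exp[Z_0] ,
\]
so that $\Exp [ Z_{\tau_x \wedge n \wedge \nu} ] \leq \Exp[Z_0] + B \, \Exp[\nu]$. On the event $\{\tau_x \leq n \wedge \nu\}$, the stopped value equals $Z_{\tau_x} \geq x$, and since $Z \geq 0$ this provides the Markov-style lower bound
\[
\Exp[Z_{\tau_x \wedge n \wedge \nu}] \geq x \, \Pr[\tau_x \leq n \wedge \nu] .
\]
Combining these gives $\Pr[\tau_x \leq n \wedge \nu] \leq (\Exp[Z_0] + B \, \Exp[\nu])/x$.

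Finally I would let $n \to \infty$. Since $\nu < \infty$ a.s., the events $\{\tau_x \leq n \wedge \nu\}$ increase to $\{\tau_x \leq \nu\}$, and monotone convergence of probabilities yields (\ref{maxineq}). The only mildly delicate point, and the one I expect to require the most care in a fully rigorous write-up, is the supermartingale verification under the \emph{on-event} drift hypothesis; once that is cleanly established, the remainder is a routine optional stopping argument.
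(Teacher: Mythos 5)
Your argument is correct, and it reaches (\ref{maxineq}) by a route that is genuinely different from the paper's. You subtract the deterministic compensator $B(t\wedge\nu)$ from the stopped process to obtain a supermartingale $M_t$ and then apply the optional stopping theorem at the bounded stopping time $\tau_x\wedge n$, extracting the Markov-style lower bound $\Exp[Z_{\tau_x\wedge n\wedge\nu}]\geq x\,\Pr[\tau_x\leq n\wedge\nu]$ from non-negativity of $Z$. The paper instead compensates in the opposite direction: writing $A_t:=\Exp[Z_{t+1}-Z_t\mid\F_t]$, it sets $Y_t:=Z_t+\sum_{s<t}A_s^-$ to produce a non-negative submartingale $Y_{t\wedge\nu}\geq Z_{t\wedge\nu}$ with drift $A_t^+\in[0,B]$, bounds $\Exp[Y_{t\wedge\nu}]$ by $B\Exp[\nu]+\Exp[Z_0]$, and then applies Doob's maximal inequality for submartingales. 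Your approach is slightly more elementary (plain optional stopping at a hitting time rather than the maximal inequality), while the paper's compensation by $A_s^-$ rather than by the constant $B$ is the structurally cleaner step, since it only corrects the drift where it is actually negative; for this particular bound the difference washes out. One small slip: your $M_t=Z_{t\wedge\nu}-B(t\wedge\nu)$ is \emph{not} non-negative (the subtracted term can dominate), so it is not a non-negative supermartingale as stated. Fortunately this is never used — optional stopping at the bounded time $\tau_x\wedge n$ requires only the supermartingale property and integrability, not non-negativity — so the proof goes through once that mislabelling is removed.
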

\begin{proof}
It suffices to suppose that $\Exp [\nu ]< \infty$, in which case $\nu < \infty$ a.s.
Write $A_t = \Exp [ Z_{t+1} - Z_t \mid \F_t ]$, and let
$Y_t = Z_t + \sum_{s=0}^{t-1} A_s^-$; so $Y_0 = Z_0$ and $Y_t \geq Z_t$ for all $t$.
Then
\[ \Exp [ Y_{t+1} - Y_t \mid \F_t ] = \Exp [ Z_{t+1} - Z_t \mid \F_t] + A_t^- = A_t^+ \in [0,B ] , \as, \]
on $\{t < \nu\}$. Hence $Y_{t \wedge \nu}$ is a nonnegative $(\F_t)_{t \in \Z^+}$-adapted submartingale with
\[ \Exp [ Y_{(s+1) \wedge \nu } - Y_{s \wedge \nu} \mid \F_s ] \leq B \1 \{ s < \nu \} , \as\]
Taking expectations in the last display and summing from $s=0$ to $t-1$ we have
\[ \Exp [ Y_{t \wedge \nu} ] -  \Exp [ Y_0 ] \leq B \sum_{s=0}^{t-1} \Pr [ \nu > s ] \leq B \Exp [ \nu ] .\]
 Doob's submartingale inequality gives, for any $x >0$,
\[ \Pr \left[ \max_{0 \leq s \leq t} Y_{s \wedge \nu} \geq x \right] \leq \frac{\Exp [ Y_{t \wedge \nu} ]}{x}
\leq \frac{ B \Exp [ \nu] + \Exp [ Z_0]}{x} ,\]
where the final inequality follows from the preceding display and the fact that $Y_0 = Z_0$.
Since $Z_t \leq Y_t$ for all $t$, the same bound holds
with $Z_{s \wedge \nu}$ replacing $Y_{s \wedge \nu}$; since $\nu < \infty$ a.s.,
letting $t \to \infty$ we see $\max_{0 \leq s \leq t} Z_{s \wedge \nu} \to \max_{0 \leq s \leq \nu} Z_s$ a.s.,
completing the proof.
\end{proof}

 \subsection{Technical results}
 \label{sec:tech}

In this section we prepare the ground for the proofs of our theorems from Section \ref{sec:results};
we complete the proofs in Section \ref{sec:proofs2}. In the first two results, we study our first
Lyapunov function, and obtain conditions under which a local submartingale/supermartingale condition
holds. Our first Lyapunov function $f_{z,\delta} : \R \to [0,1]$ satisfies $f_{z,\delta} (y) \to 0$
as $y \to \infty$; it will enable
us to estimate, among other things, hitting probabilities for $X_t$.

\begin{lm}
\label{super}
Let $\alpha \in (0,1)$ and $\beta > \alpha$.
Suppose that there exist  $c>0$, $C < \infty$, and $x_0 < \infty$
for which (\ref{betacon}) holds
 and, for all $x \geq x_0$,
 (\ref{alpha1}) holds. For $z \in \R$ and $\delta >0$,
 define the non-increasing function $f_{z,\delta} : \R \to [0,1]$ by
\begin{equation}
\label{fdef}
 f_{z,\delta} (y) := \begin{cases} 1 & \textrm{if}~ y \leq z \\
(1+y-z)^{-\delta} & \textrm{if}~ y > z \end{cases}. \end{equation}
 Then for any $\delta \in (0, \beta - \alpha)$ and some $A >0$ sufficiently large,
 for any $z \in \R$,
 a.s.,
 \[ \Exp [ f_{z,\delta} ( X_{t+1} ) - f_{z,\delta} (X_t) \mid \F_t ] \leq 0 , \textrm{ on }   \{ X_t > z+A \} .\]
\end{lm}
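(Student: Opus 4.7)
The plan is to carry out a one-step Taylor expansion of $f_{z,\delta}$ combined with a careful truncation of the increment $\Delta_t := X_{t+1} - X_t$, exploiting the lower bound (\ref{alpha1}) for the gain from positive jumps and the upper bound (\ref{betacon}) to control the loss from negative jumps. Set $W := X_t - z$ and write $g(w) := (1+w)^{-\delta}$ for $w > 0$ (extended by $g(w) = 1$ for $w \leq 0$), so that $f_{z,\delta}(X_t) = g(W)$ and $f_{z,\delta}(X_{t+1}) = g(W + \Delta_t)$; on $(0,\infty)$ the function $g$ is smooth, non-increasing, convex, with $g'(w) = -\delta(1+w)^{-\delta-1}$ and $g''(w) = \delta(\delta+1)(1+w)^{-\delta-2}$.

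Fix a small $\eta \in (0,1/2)$ (to be chosen in terms of $\alpha, \beta, \delta$) and split the conditional expectation over the three events $\{|\Delta_t| \leq \eta W\}$, $\{\Delta_t > \eta W\}$, $\{\Delta_t < -\eta W\}$. In the middle regime, a second-order Taylor expansion with mean-value remainder applies: the intermediate point lies in $[(1-\eta)W, (1+\eta)W]$, whence $g''$ there is at most $C W^{-\delta-2}$ for a constant $C$ depending only on $\delta$ and $\eta$, giving
\[
g(W + \Delta_t) - g(W) \leq -\delta (1+W)^{-\delta - 1} \Delta_t + C W^{-\delta-2} \Delta_t^2.
\]
On $\{\Delta_t > \eta W\}$ the difference $g(W+\Delta_t) - g(W)$ is $\leq 0$ and may be discarded. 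On $\{\Delta_t < -\eta W\}$ use $g(W + \Delta_t) - g(W) \leq 1$, and Markov applied to (\ref{betacon}) gives $\Pr[\Delta_t^- > \eta W \mid \F_t] \leq C (\eta W)^{-\beta}$.

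Splitting $\Delta_t = \Delta_t^+ - \Delta_t^-$ in the Taylor bound, the first-order contribution from the positive part, together with the (\ref{alpha1}) lower bound, produces the dominant negative term of order $-W^{-\delta - \alpha}$, namely $-\delta c (\eta W)^{1-\alpha} (1+W)^{-\delta-1}$ (valid for $\eta W \geq x_0$). The corresponding second-order positive-part contribution is at most $C(\eta W) \cdot \Exp[\Delta_t^+ \1\{\Delta_t^+ \leq \eta W\} \mid \F_t] \cdot W^{-\delta-2}$, which shares its random factor with the first-order term; since no upper tail bound for $\Delta_t^+$ is available, it must be \emph{absorbed} into the first-order gain by choosing $\eta$ small (so that $C\eta < \delta/2$). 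The negative-part first- and second-order contributions are controlled using (\ref{betacon}) via $\Exp[\Delta_t^- \1\{\Delta_t^- \leq \eta W\} \mid \F_t] \leq C W^{(1-\beta)^+}$ and $\Exp[(\Delta_t^-)^2 \1\{\Delta_t^- \leq \eta W\} \mid \F_t] \leq C W^{(2-\beta)^+}$, which follow from Jensen (when $\beta \geq 1$ resp.\ $\beta \geq 2$) or the elementary truncation $\Delta_t^- \leq (\Delta_t^-)^\beta (\eta W)^{1-\beta}$ on $\{\Delta_t^- \leq \eta W\}$ (otherwise).

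Collecting the estimates, the conditional expectation is at most a negative term of order $-W^{-\delta-\alpha}$ plus positive errors of orders $W^{-\delta-(\beta \wedge 1)}$, $W^{-\delta-(\beta \wedge 2)}$, and $W^{-\beta}$. Since $\alpha < \beta$ and $\alpha < 1$, the first two are dominated for large $W$; the remaining tail error $W^{-\beta}$ is dominated precisely when $\delta + \alpha < \beta$, i.e.\ under the hypothesis $\delta < \beta - \alpha$. Hence, for $\eta$ small enough and then $A$ large enough, the expression is non-positive on $\{W > A\} = \{X_t > z + A\}$. The principal subtlety is that the Taylor second-order error from large positive jumps cannot be handled by any moment control on $\Delta_t^+$ (none being provided by the hypotheses); it must instead be matched against the first-order gain through an appropriate scaling of the truncation level, which forces the truncation at $\eta W$ with $\eta$ depending on $\delta$, rather than at an absolute or slowly growing threshold.
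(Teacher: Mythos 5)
Your proof is correct, but it takes a genuinely different technical route from the paper's. The paper's proof of Lemma~\ref{super} truncates at the \emph{sublinear} level $y^\gamma$ with $\gamma\in(0,1)$: on $\{\Delta_t^+ \le y^\gamma\}$ the ratio $\Delta_t^+/y$ tends to $0$ uniformly as $y\to\infty$, so a first-order Taylor expansion with a uniform multiplicative $(1+o(1))$ error suffices, and there is no second-order remainder to control. The resulting dominant negative exponent is $-1-\delta+\gamma(1-\alpha)$, the tail error from $\Pr[\Delta_t^- > y^\gamma\mid\F_t]$ is of order $y^{-\gamma\beta}$, and the hypothesis $\delta<\beta-\alpha$ enters only indirectly, through the requirement that the window $\gamma\in\bigl(\tfrac{1+\delta}{1+\beta-\alpha},1\bigr)$ be non-empty. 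Your truncation at the \emph{linear} level $\eta W$ makes $\Delta_t/W$ bounded but not vanishing, so no uniform $o(1)$ first-order expansion is available; this forces you to carry the genuine second-order Taylor remainder and absorb the positive-part contribution (which shares the random integrand $\Delta_t^+\1\{\Delta_t^+\le\eta W\}$) into the first-order gain by taking $\eta$ small --- a device the paper sidesteps by its choice of truncation level. What your approach buys is a more transparent final comparison: the gain is literally of order $W^{-\delta-\alpha}$ and the dominant tail cost of order $W^{-\beta}$, so $\delta<\beta-\alpha$ appears immediately without an intermediate optimization over $\gamma$. What the paper's choice buys is uniformity of method with Lemmas~\ref{sub} and~\ref{lem6}, which rely on the same $y^\gamma$ truncation, and avoidance of the absorption step. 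Your closing remark --- that the positive-part second-order error cannot be bounded by any moment of $\Delta_t^+$ (none being hypothesized) and so must be matched against the first-order gain --- is accurate and well observed; the paper avoids the issue entirely by keeping the Taylor step first-order.
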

\begin{proof}
It suffices to suppose that $z =1$.
Let $\delta >0$, and let $f_\delta := f_{1,\delta}$ be as defined at (\ref{fdef}).
Let $\gamma \in (0,1)$; we will specify $\delta$ and $\gamma$ later.
Since $f_\delta$ is non-increasing and $[0,1]$-valued, we have for
$y >1$ that
\begin{align}
\label{eq1}
f_\delta ( y + \Delta_t ) - f_\delta (y) & \leq \left[ ( y+ \Delta_t^+ )^{-\delta} - y^{-\delta} \right] \1 \{ \Delta_t^+ \leq y^\gamma \} \nonumber\\
& ~~{} + \left[ ( y- \Delta_t^-)^{-\delta} - y^{-\delta} \right] \1 \{ \Delta_t^- \leq y^\gamma \} + \1 \{ \Delta_t^- > y^\gamma \} .
\end{align}
We will take expectations on both sides of (\ref{eq1}), conditioning on $\F_t$
and setting $y = X_t$. The final term in (\ref{eq1}) then becomes,
by
Markov's inequality and (\ref{betacon}),
\begin{equation}
\label{eq31}
 \Pr [ \Delta_t^- > X_t^\gamma  \mid \F_t ]
= \Pr [ (\Delta_t^- )^\beta > X_t^{\gamma \beta} \mid \F_t ]
\leq C X_t^{-\gamma \beta} , \as  \end{equation}
  For the second term on the right-hand
side of (\ref{eq1}), since $\gamma <1$, Taylor's formula implies that, as $y \to \infty$,
\begin{equation}
\label{eq2}  \left[ ( y- \Delta_t^-)^{-\delta} - y^{-\delta} \right] \1 \{ \Delta_t^- \leq y^\gamma \}
= \delta ( 1 + o(1) ) y^{-1-\delta}  \Delta_t^- \1 \{ \Delta_t^- \leq y^\gamma \} ,
\end{equation}
where the $o(1)$ term is uniform in $t$ and $\omega$.
Here we have for the product of the final two terms in (\ref{eq2}) that
\begin{equation}
\label{eq3}
 \Delta_t^- \1 \{ \Delta_t^- \leq y^\gamma \} = (\Delta_t^- )^{\beta \wedge 1} (\Delta_t^-)^{(1-\beta)^+} \1 \{ \Delta_t^- \leq y^\gamma \}
\leq (\Delta_t^-)^{\beta \wedge 1} y^{\gamma (1-\beta)^+} .\end{equation}
Combining (\ref{eq2}) and (\ref{eq3}), taking $y=X_t$ and using
(\ref{betacon}), we obtain that, a.s.,
\begin{equation}
\label{eq32}
 \Exp \left[ \left[ ( X_t- \Delta_t^-)^{-\delta} - X_t^{-\delta} \right] \1 \{ \Delta_t^- \leq X_t^\gamma \} \mid \F_t \right]
= O ( X_t^{-1-\delta+\gamma (1-\beta)^+} ),    \end{equation}
on $\{ X_t > 1\}$,
uniformly in $t$ and $\omega$.
For the first term on the right-hand side of (\ref{eq1}),
another application of Taylor's formula implies that, as $y \to \infty$,
\[  \left[ ( y +\Delta_t^+)^{-\delta} - y^{-\delta} \right] \1 \{ \Delta_t^+ \leq y^\gamma \}
= - \delta ( 1 + o(1) )  y^{-1-\delta} \Delta_t^+ \1 \{ \Delta_t^+ \leq y^\gamma \} .\]
Setting $y=X_t$, taking expectations, and using (\ref{alpha1}) we obtain, for $X_t$ sufficiently large,
\begin{equation}
\label{eq33}
 \Exp \left[ \left[ ( X_t + \Delta_t^+)^{-\delta} - X_t^{-\delta} \right] \1 \{ \Delta_t^+ \leq X_t^\gamma \} \mid \F_t \right]
\leq - (c \delta /2)   X_t^{-1-\delta+\gamma (1-\alpha )  },   \as   \end{equation}
Thus from (\ref{eq1}), using the estimates (\ref{eq31}), (\ref{eq32}) and (\ref{eq33}),
we verify that $\Exp [ f_\delta ( X_{t+1} ) - f_\delta (X_t) \mid \F_t ] \leq 0$, on $\{ X_t > A \}$
for some $A$ sufficiently large, provided
that the negative term arising from (\ref{eq33}) dominates, i.e.,
\[ -1 - \delta + \gamma (1-\alpha) > - \gamma \beta , ~~~\textrm{and} ~~~
-1 -\delta + \gamma (1-\alpha) > -1-\delta + \gamma (1-\beta)^+ .\]
The second inequality holds since  $\alpha < \beta \wedge 1$.
The first inequality holds provided we choose $\delta \in (0, \beta -\alpha)$,
which we may do since $\alpha <  \beta$, and then choose
$\gamma \in ( \frac{1+\delta}{1+\beta-\alpha} , 1 )$.
\end{proof}

\begin{lm}
\label{sub}
Let $\alpha \in (0,1)$ and $\beta > \alpha$.
Suppose that there exist $C < \infty$, $c>0$, and $x_0 < \infty$
for which $\Exp [ (\Delta_t^+)^\alpha \mid \F_t ] \leq C$ a.s.\ and, for all $x \geq x_0$,
 $\Pr [ \Delta^-_t \geq x \mid \F_t] \geq c x^{-\beta}$ a.s. For $z \in \R$ and $\delta >0$,
 define $f_{z,\delta}$ as at (\ref{fdef}).
 Then for any $\delta > \beta - \alpha$ and some $A >0$ sufficiently large,
 for any $z \in \R$,
 a.s.,
 \[ \Exp [ f_{z,\delta} ( X_{t+1} ) - f_{z,\delta} (X_t) \mid \F_t ] \geq 0, \textrm{ on }   \{ X_t > z+A \}  .\]
\end{lm}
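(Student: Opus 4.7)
The proof parallels that of Lemma \ref{super}, with the direction of all inequalities reversed: here the bounded-$\alpha$-moment hypothesis controls the up-jumps while the heavy-tail hypothesis on $\Delta_t^-$ provides the required positive drift (for $f_\delta$). By spatial homogeneity it suffices to take $z=1$; set $f_\delta := f_{1,\delta}$, write $y := X_t$, and suppose $y > A$ for $A$ to be chosen later. Fix a parameter $\gamma \in (0,1)$ to be specified, and decompose $f_\delta(y+\Delta_t) - f_\delta(y) = I_1 + I_2$, where $I_1$ collects the contribution on $\{\Delta_t \ge 0\}$ and $I_2$ on $\{\Delta_t < 0\}$. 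I will bound each piece from below.

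For $I_1$, convexity of $x \mapsto x^{-\delta}$ on $(0,\infty)$ yields the tangent-line inequality $(y+\Delta_t^+)^{-\delta} \ge y^{-\delta} - \delta y^{-1-\delta}\Delta_t^+$ for all $\Delta_t^+ \ge 0$. Combining this on $\{\Delta_t^+ \le y^\gamma\}$ with the trivial bound $f_\delta \ge 0$ elsewhere, then taking conditional expectations and applying Markov's inequality to $\Exp[(\Delta_t^+)^\alpha \mid \F_t] \le C$ via the estimates $\Exp[\Delta_t^+ \1\{\Delta_t^+ \le y^\gamma\} \mid \F_t] \le y^{\gamma(1-\alpha)}\Exp[(\Delta_t^+)^\alpha \mid \F_t]$ and $\Pr[\Delta_t^+ > y^\gamma \mid \F_t] \le y^{-\gamma\alpha}\Exp[(\Delta_t^+)^\alpha \mid \F_t]$, I obtain $\Exp[I_1 \mid \F_t] \ge -C\delta y^{-1-\delta+\gamma(1-\alpha)} - Cy^{-\delta-\gamma\alpha}$. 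For $I_2$ the key observation is that on $\{\Delta_t^- > y\}$ one has $y - \Delta_t^- \le 0 < 1$, so $f_\delta(y - \Delta_t^-) = 1$ and the increment is at least $1 - y^{-\delta} \ge 1/2$ for $y$ large; since $f_\delta$ is non-increasing, $I_2 \ge 0$ throughout, and the tail hypothesis $\Pr[\Delta_t^- > y \mid \F_t] \ge cy^{-\beta}$ gives $\Exp[I_2 \mid \F_t] \ge (c/2)y^{-\beta}$.

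Assembling the two bounds, $\Exp[f_\delta(X_{t+1}) - f_\delta(X_t) \mid \F_t] \ge (c/2)y^{-\beta} - C\delta y^{-1-\delta+\gamma(1-\alpha)} - Cy^{-\delta-\gamma\alpha}$ on $\{X_t > A\}$. The right-hand side is non-negative for all $y$ large provided $\gamma(1-\alpha) < 1 + \delta - \beta$ and $\gamma\alpha > \beta - \delta$, and a direct calculation shows the set of $\gamma \in (0,1)$ satisfying both inequalities is non-empty precisely when $\delta > \beta - \alpha$, matching the hypothesis; any $\gamma$ in the interval $(\max\{0,(\beta-\delta)/\alpha\}, \min\{1,(1+\delta-\beta)/(1-\alpha)\})$ works. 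The main obstacle is the absence of any matching upper bound on the heavy-tailed $\Delta_t^-$, which means $\Exp[\Delta_t^-]$ may be infinite and a Taylor-integral approach to the downward contribution is hopeless; the trick is to forgo the intermediate range of $\Delta_t^-$ entirely and extract the positive drift purely from the catastrophic events $\{\Delta_t^- > y\}$, exploiting the cap $f_\delta \le 1$ built into the definition of $f_{z,\delta}$.
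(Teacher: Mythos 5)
Your proof is correct and follows essentially the same route as the paper's: the same cut at $\Delta_t^+ \le y^\gamma$, the crude bound $f_\delta \ge 0$ on the up-jump tail, extraction of the positive drift solely from $\{\Delta_t^- \ge X_t\}$ via the cap $f_\delta \le 1$, and the same choice of $\gamma$ in $(\max\{0,(\beta-\delta)/\alpha\},1)$. The only cosmetic difference is that you replace the paper's Taylor expansion with the exact tangent-line inequality $(y+u)^{-\delta} \ge y^{-\delta} - \delta y^{-1-\delta}u$ from convexity, which avoids the uniform $(1+o(1))$ bookkeeping.
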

\begin{proof}
As in the proof of Lemma \ref{super}, it suffices to take $z=1$.
Let $\delta >0$, and let $f_\delta := f_{1,\delta}$ be as defined at (\ref{fdef}).
Let $\gamma  \in (0,1)$; we will specify $\delta$ and $\gamma$   later.
For $y >1$ we have
\begin{align}
\label{f2}
f_\delta ( y + \Delta_t ) - f_\delta (y) &  \geq [ ( y + \Delta_t^+ )^{-\delta} - y^{-\delta} ] \1 \{ \Delta_t^+ \leq y^\gamma \} \nonumber\\
& ~~ + ( 1 - y^{-\delta} ) \1 \{ \Delta_t^- \geq y \}
- y^{-\delta} \1 \{ \Delta_t^+ > y^\gamma \} .\end{align}
In (\ref{f2}), we will set $y=X_t$.
We bound the three terms on the right-hand side of (\ref{f2}).
For the first term, we have that by Taylor's formula, as $y \to \infty$, since $\gamma<1$,
\begin{align*}
 \left[ ( y + \Delta_t^+ )^{-\delta} - y^{-\delta} \right] \1 \{ \Delta_t^+ \leq y^\gamma \}
   = - \delta  (1 + o(1)) y^{-1-\delta}  \Delta_t^+ \1 \{ \Delta_t^+ \leq y^\gamma \} , \as,
 \end{align*}
 where, as usual, the $o(1)$ term is uniform in $t$ and $\omega$.
 Similarly to (\ref{eq3}), we have that
 $\Delta_t^+  \1 \{ \Delta_t^+ \leq y^\gamma \}  \leq (\Delta_t^+)^\alpha y^{(1-\alpha)\gamma}$,
 so that
 \[ \left| ( y + \Delta_t^+ )^{-\delta} - y^{-\delta} \right| \1 \{ \Delta_t^+ \leq y^\gamma \}
 = O ( (\Delta_t^+ )^\alpha y^{(1-\alpha) \gamma-1-\delta} ) , \as, \]
 uniformly in $t$ and $\omega$.
  It follows that, on $\{ X_t > 1\}$, a.s.,
 \begin{align}
 \label{f2a}
  \Exp \left[ | ( X_t + \Delta_t^+ )^{-\delta} - X_t^{-\delta} | \1 \{ \Delta_t^+ \leq X_t^\gamma \} \mid \F_t \right] &
 = O ( X_t^{(1-\alpha)\gamma-1-\delta} \Exp [ ( \Delta_t^+ )^\alpha \mid \F_t ] ) \nonumber\\
& = O ( X_t^{(1-\alpha)\gamma-1-\delta} ) ,\end{align}
uniformly in $t$ and $\omega$.
 For the second term on the right-hand side of (\ref{f2}), we have that for some $A >1$ sufficiently large,
  on $\{ X_t >A\}$, a.s.,
 \begin{equation}
 \label{f2b} \Exp [ (1 - X_t^{-\delta} ) \1 \{ \Delta_t^- \geq X_t \} \mid \F_t ]
 \geq (1/2) \Pr [ \Delta_t^- \geq X_t \mid \F_t ] \geq (c/2) X_t^{-\beta} .\end{equation}
For the third term on the right-hand side of (\ref{f2}), we have that, by Markov's inequality,
\begin{equation}
\label{f2c}
 \Exp [ X_t^{-\delta} \1 \{ \Delta_t^+ > X_t^{\gamma} \} \mid \F_t ]
\leq X_t^{-\delta} X_t^{-\alpha \gamma} \Exp [ (\Delta_t^+)^{\alpha} \mid \F_t ]
= O ( X_t^{-\delta - \alpha \gamma} ) .\end{equation}
Combining (\ref{f2}) with (\ref{f2a}), (\ref{f2b}) and (\ref{f2c}) we have that
on $\{ X_t > A\}$, a.s.,
\begin{align*}
\Exp [ f_\delta ( X_{t+1} ) - f_\delta ( X_t ) \mid \F_t ] \geq (c/2) X_t^{-\beta} + O (X_t^{-\delta-\alpha \gamma})
+ O( X_t^{(1-\alpha)\gamma -1-\delta} ) .\end{align*}
The positive $X_t^{-\beta}$ term here dominates for $A$ large enough provided that
\[ -\beta > -\delta -\alpha \gamma  ~~\textrm{and}~   -\beta > (1-\alpha) \gamma -1-\delta .\]
For any $\delta > \beta - \alpha$, the second inequality holds since $\alpha \in (0,1)$ and $\gamma < 1$.
Given any such $\delta$, the first inequality holds provided we choose $\gamma \in (\frac{\beta-\delta}{\alpha} , 1)$.
\end{proof}

Our next result deals with a Lyapunov function of a different kind:
$W_t \to \infty$ as $X_t \to -\infty$. This function will allow us to study,
amongst other things, passage-times for $X_t$. In particular,
Lemma \ref{lem6} will be  central to the proofs of  Theorems \ref{mom2} and \ref{last1}.

 \begin{lm}
 \label{lem6}
 Let $\alpha \in (0,1)$ and $\beta > \alpha$.
Suppose that there exist $c >0$, $C<\infty$, and $x_0 < \infty$
for which (\ref{betacon}) holds and (\ref{alpha1}) holds for all $x \geq x_0$.
For $\gamma \in (\alpha, \beta)$ and $y \in \R$
let $W_t := (y-X_t)^\gamma \1 \{ X_t < y \}$.
Then the following hold.
\begin{itemize}
\item[(i)] Take
 $\gamma = \beta -\eps$.
 Then for any  $\eps \in (0, \frac{\beta(\beta-\alpha)}{1+\beta-\alpha})$
 there exists a finite constant $K$ such that, for all $t$,
\[ \Exp [ W_{t+1} - W_t \mid \F_t ] \leq K, \as\]
\item[(ii)] For any $\eta \in (0, 1 - (\alpha/\beta) )$, we can choose
$x < y$ and $\gamma \in (\alpha, \beta)$ such that, for some $\eps>0$, for all $t$, on $\{ X_t < x \}$,
\[ \Exp [ W_{t+1} - W_t \mid \F_t ] \leq - \eps W_t ^\eta , \as \]
\end{itemize}
 \end{lm}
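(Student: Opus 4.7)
The plan is to study $W_t = (y - X_t)^\gamma \1\{X_t < y\}$ by truncating the increment $\Delta_t$ at a threshold proportional to $Y_t := y - X_t$ and applying a mean-value-theorem expansion in the small-jump regime, in parallel with the approach used in Lemmas \ref{super} and \ref{sub}. First I would dispose of the region $\{X_t \geq y\}$: there $W_t = 0$ and $W_{t+1} \leq (\Delta_t^-)^\gamma$, so since $\gamma < \beta$, Lyapunov's inequality combined with (\ref{betacon}) yields $\Exp [ W_{t+1} \mid \F_t ] \leq C^{\gamma/\beta}$, a uniform constant, which is all that is needed for part~(i) in this region and is irrelevant to part~(ii).

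On $\{ X_t < y \}$ I would split the analysis over the events $\{ |\Delta_t| \leq Y_t/2 \}$, $\{ \Delta_t^- > Y_t/2 \}$, and $\{ \Delta_t^+ > Y_t/2 \}$. In the small-jump regime the mean value theorem applied to $u \mapsto u^\gamma$ gives one-sided bounds
\[
(Y_t - \Delta_t^+)^\gamma - Y_t^\gamma \leq - \kappa_\gamma Y_t^{\gamma-1}\Delta_t^+, \quad
(Y_t + \Delta_t^-)^\gamma - Y_t^\gamma \leq K_\gamma Y_t^{\gamma-1}\Delta_t^-,
\]
for some positive constants $\kappa_\gamma, K_\gamma$ (handling $\gamma \leq 1$ and $\gamma > 1$ separately when estimating $\xi^{\gamma-1}$ on the MVT interval). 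Applying (\ref{alpha1}) at $x = Y_t/2$ (valid once $Y_t \geq 2x_0$) converts the $\Delta_t^+$ contribution into a negative drift of order $-\kappa'_\gamma Y_t^{\gamma-\alpha}$. The $\Delta_t^-$ part of the small-jump contribution is controlled by $\Exp [ \Delta_t^- \1\{ \Delta_t^- \leq Y_t/2 \} \mid \F_t ] \leq C' Y_t^{(1-\beta)^+}$ (Lyapunov if $\beta \geq 1$, a direct truncation bound if $\beta<1$), producing a positive contribution of order $Y_t^{\gamma - (1 \wedge \beta)}$. For the large-$\Delta_t^-$ regime, $Y_t + \Delta_t^- \leq 3 \Delta_t^-$ gives a positive contribution at most $3^\gamma \Exp [ (\Delta_t^-)^\gamma \1\{ \Delta_t^- > Y_t/2\} \mid \F_t ]$, and a Markov bound with exponent $\beta - \gamma > 0$ makes this of order $Y_t^{\gamma-\beta}$. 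Large positive jumps only render $W_{t+1} - W_t$ more negative (either $W_{t+1}\le W_t$ or $W_{t+1}=0$), so they may be discarded when upper-bounding the drift.

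Since $\alpha < 1 \wedge \beta$, both positive exponents $\gamma - (1\wedge\beta)$ and $\gamma - \beta$ are strictly smaller than $\gamma - \alpha$, so the negative term dominates for $Y_t$ large: there exists $A < \infty$ such that, on $\{ Y_t \geq A \}$, $\Exp [ W_{t+1} - W_t \mid \F_t ] \leq -(\kappa'_\gamma/2) Y_t^{\gamma-\alpha}$ a.s. For part~(ii), given $\eta \in (0, 1 - \alpha/\beta)$ I would pick $\gamma \in (\alpha,\beta)$ close enough to $\beta$ that $\eta < 1 - \alpha/\gamma$ (possible since $\alpha/(1-\eta) < \beta$), and then choose $x$ with $y - x > A$; on $\{ X_t < x \}$ we then have $Y_t > A$ and $\gamma\eta \leq \gamma - \alpha$, converting the bound above into $-\eps W_t^\eta$ for some $\eps > 0$ because $W_t^\eta = Y_t^{\gamma\eta}$ there. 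For part~(i), in the moderate region $\{ 0 < Y_t < A\}$ the crude estimate $W_{t+1} \leq (A + \Delta_t^-)^\gamma$ combined with $\Exp [ (\Delta_t^-)^\gamma \mid \F_t ] \leq C^{\gamma/\beta}$ gives a uniform constant bound on the drift, and assembling the three regions yields the required $K$.

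The main technical obstacle is managing the MVT bounds uniformly across $\gamma \leq 1$ and $\gamma > 1$ and tracking which exponent among $\gamma-\alpha$, $\gamma-(1\wedge\beta)$, $\gamma-\beta$ dominates; the specific range $\eps \in (0, \tfrac{\beta(\beta-\alpha)}{1+\beta-\alpha})$ in (i) likely reflects a refined truncation at scale $Y_t^\rho$ with optimally chosen $\rho < 1$ (in place of my cruder $Y_t/2$), but the essential conclusion is that the drift is dominated by $-Y_t^{\gamma-\alpha}$, from which both parts follow.
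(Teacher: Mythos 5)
Your proof is correct, and it takes a genuinely different (and arguably cleaner) route from the paper's. Both arguments share the Lyapunov-function framework and the same basic decomposition into small and large increments, with a mean-value expansion in the small-jump regime and a tail bound in the large-jump regime; but the paper truncates at the \emph{sublinear} scale $(y - X_t)^\theta$ with $\theta \in (0,1)$, whereas you truncate at the \emph{linear} scale $(y-X_t)/2$. This forces you to replace the paper's $1+o(1)$ Taylor factor by a one-sided mean-value constant $\kappa_\gamma$ depending on whether $\gamma \lessgtr 1$, which you handle correctly. The more substantive divergence is in the large-$\Delta_t^-$ tail: the paper forces the reparametrization $\gamma = \theta\beta$ so that $2^\gamma (\Delta_t^-)^{\gamma/\theta} = 2^\gamma(\Delta_t^-)^\beta$ has bounded conditional expectation via (\ref{betacon}), whereas you extract a factor $(\Delta_t^-)^{\beta-\gamma}$ and use the Markov-type bound $(\Delta_t^-)^{\gamma-\beta} \leq (Y_t/2)^{\gamma-\beta}$ on $\{\Delta_t^- > Y_t/2\}$ to get $O(Y_t^{\gamma-\beta})$, which is then absorbed since $\beta > \alpha$. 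This is simpler: it decouples $\gamma$ from the truncation parameter and produces the drift bound $\lesssim -Y_t^{\gamma-\alpha}$ for \emph{every} $\gamma\in(\alpha,\beta)$, which in particular proves part (i) for all $\eps \in (0,\beta-\alpha)$, strictly wider than the paper's $\eps \in (0,\tfrac{\beta(\beta-\alpha)}{1+\beta-\alpha})$. Your guess that the narrower range reflects a refined sublinear truncation is exactly right: it is an artifact of the coupling $\gamma=\theta\beta$, $\theta > 1/(1+\beta-\alpha)$ in the paper's argument, not an intrinsic obstruction. In part (ii) both approaches yield $\eta$ ranging over the full interval $(0,1-\alpha/\beta)$, so nothing is lost. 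Your treatment of the boundary regions $\{X_t \geq y\}$ (via Lyapunov's inequality applied to $(\Delta_t^-)^\gamma$) and $\{0 < y - X_t < A\}$ (via a $C_r$-type bound) for part (i) is also correct.
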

\begin{proof}
Fix $y \in \R$ and let $x < y - 1$. Also take $\gamma \in (\alpha,\beta)$ and $\theta \in (0,1)$;
we will make more restrictive specifications for these parameters later.
On $\{ X_t < y -1\}$, we have $(y - X_t)^\theta < y -X_t$ and so
\begin{align}
\label{eq6}
W_{t+1} - W_t & = ( y-X_t -\Delta_t)^\gamma \1 \{ X_{t+1} < y \} - (y-X_t)^\gamma \nonumber\\
& \leq \left[ (y-X_t - \Delta_t^+)^\gamma - (y-X_t)^\gamma \right] \1 \{ \Delta_t^+ \leq (y-X_t)^\theta \} \nonumber\\
& ~~{} + \left[ (y-X_t + \Delta_t^-)^\gamma - (y-X_t)^\gamma \right] \1 \{ \Delta_t^- \leq (y-X_t)^\theta \} \nonumber\\
& ~~{} +   (y-X_t + \Delta_t^-)^\gamma   \1 \{ \Delta_t^- \geq (y-X_t)^\theta \} .
\end{align}
We bound the three terms on the right-hand side of (\ref{eq6}) in turn.
For the first term, we have from Taylor's formula that, on $\{ X_t < x \}$,
\begin{align*}
&  \left[ (y-X_t - \Delta_t^+)^\gamma - (y-X_t)^\gamma \right] \1 \{ \Delta_t^+ \leq (y-X_t)^\theta \} \\
& \qquad{} = - \gamma \Delta_t^+ (y- X_t)^{\gamma - 1} (1 + o(1) ) \1 \{ \Delta_t^+ \leq (y-X_t)^\theta \} ,\end{align*}
where the $o(1)$ is uniform in $t$ and $\omega$ as $y -x \to \infty$. Hence, taking expectations and using
(\ref{alpha1}), it follows that
 for a fixed $y$
and any $x$ for which $y-x$ is large enough, a.s.,
\[ \Exp \left[ \left[ (y-X_t - \Delta_t^+)^\gamma - (y-X_t)^\gamma \right] \1 \{ \Delta_t^+ \leq (y-X_t)^\theta \}
\mid \F_t \right] \leq - (c \gamma / 2)  (y-X_t)^{\gamma - 1 + \theta (1-\alpha ) } ,\]
on $\{ X_t < x\}$.
 For the second term on the right-hand side of (\ref{eq6}), a similar
application of Taylor's formula yields, on $\{ X_t < x \}$, for $y-x$ sufficiently large,
\begin{align*}
&  \left[ (y-X_t + \Delta_t^-)^\gamma - (y-X_t)^\gamma \right] \1 \{ \Delta_t^- \leq (y-X_t)^\theta \} \\
& \qquad{} \leq 2 \gamma   (y- X_t)^{\gamma - 1}  (\Delta_t^-)^{\beta \wedge 1} (\Delta_t^- )^{(1-\beta)^+} \1 \{ \Delta_t^- \leq (y-X_t)^\theta \} \\
& \qquad{} \leq 2 \gamma (y-X_t)^{\gamma -1 + \theta (1-\beta)^+} (\Delta_t^-)^{\beta \wedge 1} . \end{align*}
Taking expectations and using (\ref{betacon}), we obtain, on $\{ X_t < x\}$,
\[ \Exp \left[ \left[ (y-X_t + \Delta_t^-)^\gamma - (y-X_t)^\gamma \right] \1 \{ \Delta_t^- \leq (y-X_t)^\theta \} \mid \F_t \right]
\leq K (y-X_t)^{\gamma -1 + \theta (1-\beta)^+} , \as, \]
for some constant $K$ not depending on $t$ or $\omega$.
For the final term in (\ref{eq6}), on $\{ X_t < y -1\}$,
\[ (y-X_t + \Delta_t^-)^\gamma   \1 \{ \Delta_t^- \geq (y-X_t)^\theta \}
\leq ( ( \Delta_t^-)^{1/\theta}  + \Delta_t^-)^\gamma
\leq 2^\gamma ( \Delta_t^- )^{\gamma/\theta} .\]
Taking $\gamma = \theta \beta$, which requires $\theta \in ( \alpha /\beta, 1)$, and using (\ref{betacon}),
we see that,  on $\{ X_t < y -1\}$,
\[ \Exp \left[ (y-X_t + \Delta_t^-)^\gamma   \1 \{ \Delta_t^- \geq (y-X_t)^\theta \} \mid \F_t \right] \leq 2^\gamma C , \as\]
Combining these estimates and taking expectations in (\ref{eq6}) we see that the negative term
dominates asymptotically provided
\[ \gamma -1 + \theta (1-\alpha ) >0 ~~\textrm{and}~~  \gamma -1 + \theta (1-\alpha ) > \gamma -1 + \theta (1-\beta)^+ .\]
The first inequality requires $\theta > 1/ (1+\beta - \alpha )$,
which is a stronger condition than $\theta > \alpha/\beta$ that we had already imposed,
but which can be achieved with $\theta \in (\alpha/\beta,1)$ since $\alpha < \beta$. The second inequality
reduces to $1-\alpha > (1-\beta)^+$ which is satisfied since $\alpha < \beta \wedge 1$. Part (i) follows.
Moreover, for $\gamma = \theta \beta$, $1/ (1+\beta - \alpha ) < \theta < 1$,
we can take $y-x$ large enough so that, for some $\eps>0$, on $\{ X_t < x \}$,
\[  \Exp [ W_{t+1} - W_t \mid \F_t  ] \leq - \eps (y-X_t)^{ \theta \beta - 1 + \theta (1-\alpha ) }
= - \eps W_t^\eta , \as, \]
where $\eta =  ( \theta \beta - 1 + \theta (1-\alpha )) / (\theta \beta )$ can be
anywhere in $(0, 1 - (\alpha/\beta))$, by appropriate choice of $\theta$, which proves part (ii).
\end{proof}

Lemma \ref{lem6} has as a consequence the following tail bound,
which is essentially a large deviations result of the same kind
as (but much more general than) those obtained in \cite{hn} for the case $X_t = S_t$, a sum of i.i.d.\ {\em nonnegative} random variables; indeed, the results
in \cite{hn} show that Lemma \ref{lem66} is close to best possible.

 \begin{lm}
 \label{lem66}
 Let $\alpha \in (0,1)$ and $\beta > \alpha$.
Suppose that there exist $c >0$, $C<\infty$, and $x_0 < \infty$
for which (\ref{betacon}) holds and (\ref{alpha1}) holds for all $x \geq x_0$.
Then for any $\phi >0$ and any $\eps >0$, as $t \to \infty$,
\[ \Pr \left[ \min_{0 \leq s \leq t} X_s \leq - t^\phi \right] = O ( t^{1- \beta \phi + \eps } ) .\]
 \end{lm}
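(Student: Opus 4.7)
The proof will be a direct consequence of Lemma \ref{lem6}(i) combined with the maximal inequality of Lemma \ref{maxlem}, so the plan is essentially to choose parameters carefully and bookkeep the resulting exponent.

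First, fix an arbitrary constant $y > 0$ (for concreteness take $y = 1$) and, given $\eps > 0$ and $\phi > 0$ as in the statement, choose $\eta \in (0, \eps/\phi)$ small enough that $\eta$ also lies in the range $(0, \beta(\beta-\alpha)/(1+\beta-\alpha))$ required in Lemma \ref{lem6}(i). Set $\gamma := \beta - \eta$ and $W_t := (1 - X_t)^\gamma \1\{X_t < 1\}$. By Lemma \ref{lem6}(i), there exists $K < \infty$ (depending on $\eta$) such that
\[
\Exp[W_{t+1} - W_t \mid \F_t] \leq K, \as,
\]
for every $t$. Note also that $W_0 = 1$ since $X_0 = 0$.

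Next, apply Lemma \ref{maxlem} with the deterministic stopping time $\nu \equiv t$, so that $\Exp[\nu] = t$. This gives, for every $x > 0$,
\[
\Pr\left[\max_{0 \leq s \leq t} W_s \geq x\right] \leq \frac{Kt + 1}{x}.
\]
On the event $\{\min_{0 \leq s \leq t} X_s \leq -t^\phi\}$, there exists some $s \leq t$ with $X_s \leq -t^\phi$, and for all sufficiently large $t$ this forces $W_s = (1 - X_s)^\gamma \geq (1+t^\phi)^\gamma \geq t^{\phi \gamma}$. Taking $x = t^{\phi \gamma}$ in the maximal inequality therefore yields
\[
\Pr\left[\min_{0 \leq s \leq t} X_s \leq -t^\phi\right] \leq \Pr\left[\max_{0 \leq s \leq t} W_s \geq t^{\phi\gamma}\right] \leq \frac{Kt + 1}{t^{\phi\gamma}} = O(t^{1 - \phi\gamma}).
\]

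Finally, by our choice of $\eta$ we have $1 - \phi\gamma = 1 - \phi\beta + \phi\eta \leq 1 - \phi\beta + \eps$, which gives the claimed bound. There is no real obstacle here beyond the routine parameter bookkeeping; all the work has been done in establishing Lemma \ref{lem6}(i), and the present result is essentially just the corresponding Markov-type tail bound obtained by combining that submartingale-like estimate with the Doob-based maximal inequality of Lemma \ref{maxlem}.
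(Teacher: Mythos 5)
Your proposal is correct and follows essentially the same route as the paper: apply Lemma \ref{lem6}(i) with $\gamma=\beta-\eta$ for a small $\eta$, then apply Lemma \ref{maxlem} with the deterministic stopping time $\nu=t$ to the process $W_t$. The only cosmetic difference is the choice $y=1$ rather than the paper's $y=0$ (both legitimate, since Lemma \ref{lem6} allows arbitrary $y\in\R$); your explicit choice $\eta<\eps/\phi$ also makes the final exponent bookkeeping slightly tidier than the paper's phrasing.
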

\begin{proof}
 As in   Lemma \ref{lem6}, choosing $y=0$ there,
let $W_t = (-X_t)^\gamma \1 \{ X_t < 0 \}$.
For $t >0$,
\[ \Pr \left[ \min_{0 \leq s \leq t} X_s \leq - t^\phi \right]
\leq
\Pr \left[ \max_{0 \leq s \leq t} W_s \geq t^{ \phi \gamma}  \right] .\]
 Take $\gamma = \beta -\eps$ for $\eps \in (0, \frac{\beta (\beta-\alpha)}{1+\beta-\alpha})$. Then by Lemma \ref{lem6}(i)
 and Lemma \ref{maxlem}  with $\nu = t$ (or \cite[Lemma 3.1]{mvw}),
$\Pr \left[ \max_{0 \leq s \leq t} W_s \geq t^{ \phi \gamma}  \right]  = O ( t^{1-\phi \gamma} )$,
which implies the result.
\end{proof}

The next result gives a general condition for obtaining almost-sure upper bounds.

\begin{lm}
\label{upper}
Let $h : [0,\infty) \to [0,\infty)$ be increasing and concave. Suppose that there exists $C < \infty$ such that
$\Exp [ h( \Delta_t^+ ) \mid \F_t ] \leq C$, a.s.
Then for any $\eps>0$, a.s., for all but finitely many $t \in \Z^+$,
\[ X_t \leq \sum_{s=0}^{t-1} \Delta_s^+ \leq  h^{-1} ( t (\log t)^{1+\eps} )  .\]
\end{lm}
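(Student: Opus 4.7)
The plan is to decompose $X_t$ using the positive parts of its increments, apply Lemma \ref{mvwlem} to the cumulative $h$-values of those positive parts, and then invert via the concavity of $h$. I do not anticipate any serious obstacle.

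First, the left-hand inequality $X_t \leq \sum_{s=0}^{t-1}\Delta_s^+$ is immediate: since $X_0 = 0$, we have $X_t = \sum_{s=0}^{t-1}\Delta_s = \sum_{s=0}^{t-1}(\Delta_s^+ - \Delta_s^-) \leq \sum_{s=0}^{t-1}\Delta_s^+$. So the work lies entirely in the right-hand inequality.

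Second, I would introduce the $(\F_t)$-adapted nonnegative process
\[ Z_t := \sum_{s=0}^{t-1} h(\Delta_s^+), \qquad Z_0 := 0. \]
The hypothesis $\Exp[h(\Delta_t^+) \mid \F_t] \leq C$ a.s.\ translates directly into the uniform drift bound $\Exp[Z_{t+1}-Z_t \mid \F_t] \leq C$ a.s. Lemma \ref{mvwlem} (taking $B = C$) then yields that for any $\eps > 0$, a.s., $Z_t \leq t(\log t)^{1+\eps}$ for all but finitely many $t$.

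Third, I would translate this bound into one on $\sum \Delta_s^+$ via concavity. Since $h \colon [0,\infty) \to [0,\infty)$ is increasing and concave, necessarily $h(0) \geq 0$, and a standard argument (writing $x = \tfrac{x}{x+y}(x+y) + \tfrac{y}{x+y}\cdot 0$ and similarly for $y$, then adding) gives the subadditivity $h(a+b) \leq h(a) + h(b)$ for all $a,b \geq 0$. Iterating,
\[ h\!\left( \sum_{s=0}^{t-1} \Delta_s^+ \right) \leq \sum_{s=0}^{t-1} h(\Delta_s^+) = Z_t, \]
and applying the increasing function $h^{-1}$ yields $\sum_{s=0}^{t-1}\Delta_s^+ \leq h^{-1}(Z_t)$. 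Combined with the second step and the monotonicity of $h^{-1}$, this gives $\sum_{s=0}^{t-1}\Delta_s^+ \leq h^{-1}\bigl(t(\log t)^{1+\eps}\bigr)$ for all but finitely many $t$, a.s., which together with the first step completes the proof. The only structural point worth flagging is the subadditivity of $h$, which is what permits passing from the easily-controlled sum $Z_t$ to the desired sum $\sum \Delta_s^+$ through $h^{-1}$; beyond this, the argument is a direct application of Lemma \ref{mvwlem}.
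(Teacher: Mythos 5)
Your proof is correct and essentially the same as the paper's: both hinge on the subadditivity $h(a+b)\le h(a)+h(b)$ coming from concavity and $h\ge 0$, together with Lemma \ref{mvwlem}. The only cosmetic difference is the order of operations — the paper applies Lemma \ref{mvwlem} directly to $h\bigl(\sum_{s<t}\Delta_s^+\bigr)$, using subadditivity to verify its drift bound, whereas you apply it to $\sum_{s<t}h(\Delta_s^+)$ (whose drift bound is immediate) and invoke subadditivity afterwards to compare; the two routes are equivalent.
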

\begin{proof}
Set $Y_0 := 0$ and for $t \in \N$ let $Y_t := \sum_{s=0}^{t-1} \Delta_s^+$.
Then $Y_t \geq 0$ is non-decreasing
and $X_t \leq X_0 + Y_t = Y_t$, since $X_0=0$.
Since $h$ is nonnegative and concave, it is subadditive, i.e., $h(a+b) \leq h(a) + h(b)$ for $a, b \in [0,\infty)$. Hence
\begin{equation}
\label{conc1} \Exp [ h (Y_{t} +\Delta_t^+ )  - h(Y_t ) \mid \F_t ] \leq
 \Exp [ h (\Delta^+_t ) \mid \F_t ] \leq C, \as,\end{equation}
by hypothesis. The almost-sure upper bound in Lemma \ref{mvwlem}
to $Z_t = h(Y_t)$ implies that, for any $\eps>0$, a.s., $h (Y_t) \leq t (\log t)^{1+\eps}$,
for all but finitely many $t$. Since $h$ is increasing and $X_t \leq Y_t$, it follows that
for any $\eps>0$, a.s., $h (X_t) \leq t (\log t)^{1+\eps}$.
\end{proof}

Finally, we need a result on the maxima of the increments of $X_t$.

\begin{lm}
\label{lem7}
Suppose that for some $\alpha \in (0,\infty)$, $c>0$, and $x_0 < \infty$, for all $x \geq x_0$,
(\ref{tail1}) holds.
Then for any $\eps >0$,
a.s., for all but finitely many $t\in\Z^+$,
\[ \max_{0 \leq s \leq t} \Delta_s^+ \geq t^{1/\alpha} (\log t)^{-(1/\alpha)   -\eps} .\]
\end{lm}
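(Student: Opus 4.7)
\textbf{Proof proposal for Lemma \ref{lem7}.}
Write $M_t := \max_{0 \leq s \leq t} \Delta_s^+$ and, for $\eps > 0$, set $a_t := t^{1/\alpha} (\log t)^{-(1/\alpha)-\eps}$. The plan is to bound $\Pr[M_t < a_t]$ via a conditional Borel--Cantelli argument along a geometric subsequence, using the conditional tail lower bound (\ref{tail1}), and then to patch over the gaps by monotonicity of $M_t$.

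The key quantitative step exploits the conditional hypothesis $\Pr[\Delta_s^+ \geq x \mid \F_s] \geq c x^{-\alpha}$ for $x \geq x_0$, which gives $\Pr[\Delta_s^+ < a \mid \F_s] \leq 1 - c a^{-\alpha}$ as soon as $a \geq x_0$. Conditioning on $\F_t$ and then iterating the tower property, one obtains
\[
\Pr[ M_t < a ] = \Exp\!\left[ \prod_{s=0}^{t} \1\{\Delta_s^+ < a\} \right] \leq (1 - c a^{-\alpha})^{t+1} \leq \exp\!\left( -c(t+1) a^{-\alpha} \right).
\]
Plugging in $a = a_t$ gives $(t+1) a_t^{-\alpha} \geq (\log t)^{1+\alpha \eps}$, hence $\Pr[M_t < a_t] \leq \exp(-c(\log t)^{1+\alpha\eps})$, which decays super-polynomially in $t$.

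Now fix any $\eps > 0$ and pick $\eps' \in (0,\eps)$. Apply the previous bound with $\eps'$ in place of $\eps$ along the subsequence $t_k := 2^k$: the resulting tail bound is $\exp(-c(k \log 2)^{1+\alpha\eps'})$, which is summable in $k$. The Borel--Cantelli lemma (no independence required, since this is just summability of $\Pr$) yields $M_{t_k} \geq a'_{t_k} := t_k^{1/\alpha}(\log t_k)^{-(1/\alpha)-\eps'}$ for all but finitely many $k$, almost surely.

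To interpolate, note that $M_t$ is non-decreasing in $t$, so for $t \in [t_k, t_{k+1})$ one has $M_t \geq M_{t_k} \geq a'_{t_k}$, while $a_t \leq a_{t_{k+1}}$. A direct calculation shows $a'_{t_k}/a_{t_{k+1}} \sim (\log 2)^{\eps - \eps'} \, 2^{-1/\alpha}\, k^{\eps - \eps'} \to \infty$ as $k \to \infty$, because $\eps > \eps'$; hence $M_t \geq a_t$ holds for all $t$ sufficiently large, almost surely. The main (mild) subtlety is precisely this loss-and-recovery of a logarithmic factor when interpolating between the geometric grid points, which is why one must run the Borel--Cantelli step with strict exponent $\eps' < \eps$.
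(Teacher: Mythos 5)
Your proof is correct and rests on the same key idea as the paper's: iterating the tower property/telescoping conditioning to obtain $\Pr[M_t < a] \leq (1 - ca^{-\alpha})^{t+1} \leq \exp(-c(t+1)a^{-\alpha})$, then plugging in $a = a_t$ and appealing to Borel--Cantelli. The only divergence is that your geometric-subsequence-plus-interpolation step is superfluous: as you yourself observe, the bound $\exp\!\left(-c(\log t)^{1+\alpha\eps}\right)$ already decays super-polynomially and is therefore summable over all $t \geq 2$, so the paper applies Borel--Cantelli directly to the full sequence and skips the loss-and-recovery of the logarithmic factor entirely.
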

\begin{proof}
By a telescoping conditioning argument, for $x>0$,
\[ \Pr \left[  \max_{0 \leq s \leq t} \Delta^+_s   < x \right]
= \Exp \left[ \1 \{ \Delta_0^+ < x \} \cdots \Exp \left[ \1 \{ \Delta_{t-1}^+ < x \}
\Exp \left[ \1 \{ \Delta_t^+ < x \} \mid \F_t \right] \mid \F_{t-1} \right] \cdots \mid \F_0 \right] .\]
Hence for any $x \geq x_0$, by repeated applications of (\ref{tail1}),
\begin{equation}
\label{maxtail}
  \Pr \left[  \max_{0 \leq s \leq t} \Delta^+_s   < x \right] \leq \prod_{s =0}^t (1 - c x^{-\alpha} )
\leq (1 - cx^{-\alpha} )^t .\end{equation}
Taking $x = t^{1/\alpha} (\log t)^q$ in (\ref{maxtail}) we obtain, for $t$ sufficiently large,
\begin{align*}
 \Pr \left[  \max_{0 \leq s \leq t} \Delta^+_s   < t^{1/\alpha} (\log t)^q \right]
 \leq \left( 1- ct^{-1} (\log t)^{ - \alpha q}   \right)^t
  = O \left( \exp \left( - c (\log t)^{ - \alpha q} \right) \right),
\end{align*}
which is summable over $t \geq 2$ provided $q <  -1/\alpha$.
Hence the Borel--Cantelli lemma completes the proof. \end{proof}

 \subsection{Proofs of results in Section \ref{sec:results}}
 \label{sec:proofs2}

First we give the proof of Proposition \ref{a0prop}.

\begin{proof}[Proof of Proposition \ref{a0prop}.]
We claim that under any of the conditions in the proposition, it is the case that for any $y \geq 0$ there exists
$\delta (y) >0$ for which, for all $t$,
\begin{equation}
\label{a2}
 \Pr [ | \Delta_t | > y \mid \F_t ] \geq \delta (y), \as \end{equation}
 Given (\ref{a2}),
 for any $B < \infty$,
 $\Pr [ | X_{t+1} | > 2B \mid \F_t ] \geq \delta(3B)$, a.s., on $\{ |X_t| \leq B \}$.
 Suppose that $\limsup_{t\to \infty} | X_t | = B < \infty$. But then, $\sum_{t} \Pr [ | X_{t+1} | > 2B \mid \F_t ] = \infty$ a.s.,
 which leads to a contradiction by L\'evy's extension of the Borel--Cantelli lemma (see e.g.\ \cite[Corollary 7.20]{kall}),
 and (\ref{a0}) is proved.

 It remains to verify (\ref{a2}). Since $| \Delta_t | = \Delta_t^+ + \Delta_t^-$, it suffices to
 verify (\ref{a2}) with one of $\Delta_t^+$ or $\Delta_t^-$ in place of $| \Delta_t |$.
 In the case where, say, $\Pr [ \Delta_t^+  > x \mid \F_t ] \geq c x^{-\gamma}$, a.s., for $x \geq x_0$ (condition (i) in the statement of the proposition), the
 claim is immediate. So suppose that  $\Exp [ \Delta_t^+ \1 \{ \Delta_t^+ \leq x \} \mid \F_t ] \geq c x^{1-\gamma}$, a.s.,
 for $x \geq x_0$ (condition (ii)). Then, for any $y \geq 0$, for $x > y$,
 \[ \Exp [ \Delta_t^+ \1 \{ y \leq \Delta_t^+ \leq x \} \mid \F_t ] \geq \Exp [ \Delta_t^+ \1 \{ \Delta_t^+ \leq x \} \mid \F_t  ] - y
 > 1 , \as ,\]
 provided $x > x_0 + ( (1+y)/ c)^{1/(1-\gamma)}$, say. Then, a.s.,
 \[ 1 < \Exp [ \Delta_t^+ \1 \{ y \leq \Delta_t^+ \leq x \} \mid \F_t ] \leq x \Pr [  y \leq \Delta_t^+ \leq x \mid \F_t ] \leq x \Pr[ \Delta_t^+ \geq y \mid \F_t ] ,\]
 which implies (\ref{a2}) in this case also.
\end{proof}

Next, in the proof of Theorem \ref{trans}, we use the Lyapunov function $f_{z,\delta}$ defined at (\ref{fdef}) to estimate hitting probabilities
for $X_t$.

\begin{proof}[Proof of Theorem \ref{trans}.]
First we show that, under the conditions of the theorem,
\begin{equation}
\label{lowlim}
\Pr \left[ \liminf_{t \to \infty} X_t = - \infty \right] = 0.
\end{equation}
 Let $a >0$, to be chosen later.
For $x \in \R$, set
\begin{align*}
\nu_x   := \min \{ t \in \Z^+ : X_t > x + a \} ; ~~~
\eta_x   := \min \{ t \geq \nu_x : X_t \leq x \}. \end{align*}
In particular, since $X_0 =0$, we have that $\nu_x = 0$ for
all $x < -a$.

Let $\delta \in (0,\beta-\alpha)$. Then Lemma \ref{super}
shows that, on $\{ \nu_x < \infty \}$,
 $(f_{x-A,\delta}( X_{t \wedge \eta_x} ))_{t \geq \nu_x}$ is a nonnegative
supermartingale adapted to $(\F_t)_{t \geq \nu_x}$, and so converges
a.s.\ as $t \to \infty$ to a finite limit, $L_x$, say. On $\{ \nu_x < \infty \}$,
we have by the supermartingale property that
\[ \Exp [ f_{x-A,\delta} (X_{t \wedge \nu_x} ) \mid \F_{\nu_x} ] \leq f_{x-A,\delta} (X_{\nu_x}) \leq (1+A+a)^{-\delta}, \as, \]
while by Fatou's lemma, also on $\{ \nu_x < \infty \}$,
\begin{align*} \lim_{t \to \infty} \Exp [ f_{x-A, \delta} (X_{t \wedge \eta_x}) \mid \F_{\nu_x} ] & \geq \Exp [ L_x \mid \F_{\nu_x} ] \\
& \geq \Exp [ L_x \1 \{ \eta_x < \infty \} \mid \F_{\nu_x} ]  \\
& \geq (1+A)^{-\delta} \Pr [ \eta_x < \infty \mid \F_{\nu_x} ]  ,\end{align*}
since, on $\{\eta_x < \infty\}$,
 $X_{t \wedge \eta_x} \leq x$ for all $t$ sufficiently large. So on $\{ \nu_x < \infty \}$ we have, a.s.,
 \[ \Pr [ \eta_x < \infty \mid \F_{\nu_x} ] \leq \left( \frac{1+A+a}{1+A} \right)^{-\delta} . \]

 Let $\eps >0$. Then we can take $a$ sufficiently large so that $\Pr [ \eta_x = \infty \mid \F_{\nu_x} ] \geq 1- \eps$, a.s.,
 on $\{ \nu_x < \infty \}$.
 For such a choice of $a$, suppose that $x < -a$; then $\nu_x < \infty$ a.s.\ (indeed, since $X_0 =0$,
 $\nu_x =0$ a.s.).
 Hence for such an $x$,
 \begin{equation}
 \label{eq34}
 \Pr \left[ \liminf_{t \to \infty} X_t > x \right] =
 \Pr [ \eta_x = \infty    ]
 \geq \Exp \left[ \Pr [ \eta_x = \infty \mid \F_{\nu_x} ] \1 \{ \nu_x < \infty \} \right]
 \geq 1- \eps .\end{equation}
 It follows from (\ref{eq34}) that
 \[   \Pr \left[ \liminf_{t \to \infty} X_t = - \infty \right] \leq \Pr \left[ \liminf_{t \to \infty} X_t \leq -a -1 \right] \leq \eps .\]
 Since $\eps >0$ was arbitrary, (\ref{lowlim}) follows.

Proposition \ref{a0prop} applies under condition (\ref{alpha1}). Hence, together with (\ref{a0}), (\ref{lowlim}) implies that, a.s.,
$\limsup_{t \to \infty} X_t = \infty$; in other words, for any $a >0$ and any $x \in \R$, $\nu_x < \infty$ a.s.
Hence the argument for  (\ref{eq34}) extends to {\em any} $x \in \R$, which implies that for any $x \in \R$, a.s.,
$\liminf_{t \to \infty} X_t > x$, so $X_t \to \infty$ a.s.
\end{proof}

Next we give the proofs of Theorems \ref{cor1} and \ref{speed}, based on the almost-sure bounds given
in Lemmas \ref{upper} and \ref{lem7}.

\begin{proof}[Proof of Theorem \ref{cor1}.]
First we prove part (i), so let $\theta \in (0,1)$.
For $\eps>0$, take
$h(x) = (K+x)^\theta (\log (K+x))^{-\phi-1-\eps}$.
For a large enough choice of $K \geq 1$, $h$ is nonnegative, increasing, and
concave. Moreover, $\Exp [ h (\Delta_t^+) \mid \F_t]$
 is uniformly bounded provided $\sum_{k=1}^\infty h' (k) \Pr [ \Delta^+_t > k \mid \F_t]$
is uniformly bounded; see e.g.\ \cite[p.\ 76]{gut}.
This is indeed the case under the hypothesis of the theorem, by (\ref{eq40}),
since $h'(x) = O ( x^{\theta -1} (\log x)^{-\phi -1-\eps} )$.
Now (i) follows from  Lemma \ref{upper}, noting that
$h^{-1}(x) = O ( x^{1/\theta} (\log x)^{\frac{\phi+1}{\theta} + \eps} )$.
The proof of (ii) is similar, this time taking $h(x) = (K+x)  (\log (K+x))^{-(\phi+1)^+-\eps}$.
\end{proof}

\begin{proof}[Proof of Theorem \ref{speed}.]
Let $\eps >0$.
  Lemma \ref{upper} applied
   to $-X_t$ with $h(x) = x^{\beta \wedge 1}$, using (\ref{betacon}),
shows that, a.s., for all but finitely many $t$,
\[ \sum_{s=0}^{t-1} \Delta_s^- \leq t^{1/(\beta \wedge 1)} (\log t)^{(1/(\beta \wedge 1)) + \eps} .\]
 On the other hand, Lemma \ref{lem7} implies that, a.s., for all but finitely many $t$,
\[ \sum_{s=0}^{t-1} \Delta_s^+ \geq \max_{0 \leq s \leq t-1} \Delta_s^+ \geq t^{1/\alpha} (\log t)^{-(1/\alpha) -\eps} ,\]
 Combining these  bounds and using the fact that $\alpha < \beta \wedge 1$ we complete the proof.
\end{proof}

Now we turn to the proofs of our results on first passage times. First we prove Theorem \ref{mom2}, which uses the
Lyapunov function $W_t$ given in Lemma \ref{lem6}, together with the general criterion Lemma \ref{aimlem}.

\begin{proof}[Proof of Theorem \ref{mom2}.]
Define $W_t = (y-X_t)^\gamma \1 \{ X_t < y\}$ as in
 Lemma \ref{lem6}. For $z >0$, let $\sigma_z = \min \{ t \in \Z^+ : W_t \leq z \}$.
Since $\{ W_t \leq z \} = \{ X_t \geq y - z^{1/\gamma} \}$,
we have with $\tau_x$ as defined by (\ref{tau})
 that $\tau_x = \sigma_{(y-x)^\gamma}$ for $x \leq y$. Now fix $x \in \R$.
 Under the conditions of the theorem,
Lemma \ref{lem6}(ii) implies that, for any $\eta \in (0, 1 - (\alpha/\beta) )$,
for $y > x$ sufficiently large,
on $\{ t < \sigma_{(y-x)^\gamma} \}$, a.s.,
\[ \Exp [ W_{t+1} - W_t \mid \F_t ] \leq - \eps W_t ^\eta .\]
Then Lemma \ref{aimlem} shows
 that for any $x \in \R$,
$\Exp [ \tau_x ^p ] = \Exp [ \sigma_{(y-x)^\gamma} ^p ] < \infty$,
for any $p < \beta/\alpha$.
\end{proof}

Next we prove our non-existence of moments result for $\tau_x$. General semimartingale
analogues of Lemma \ref{aimlem} are available for non-existence results (see e.g.\ \cite{aim})
but typically require strong control (such as uniform boundedness) of the increments of the process. Thus we
use a different idea, based on Lemma \ref{maxlem}: roughly speaking, we show that with good probability $X_t$ travels a long way in the negative
direction with a single heavy-tailed jump, and then must take a long time to come back.

\begin{proof}[Proof of Theorem \ref{mom1}.]
Fix $x > 0$ and let   $y <x$.
Let $W'_t = (X_t-y) ^\alpha \1 \{ X_t > y \}$. Then on $\{ X_t \leq y \}$, $W'_{t+1} - W'_t \leq ( \Delta^+_t )^\alpha$.
 On the other hand, on $\{X_t > y\}$,
\[ W'_{t+1} - W'_t \leq (X_t +\Delta_t^+ -y)^\alpha - (X_t-y)^\alpha \leq (\Delta_t^+ )^\alpha,\]
by concavity since $\alpha \in (0,1]$.
Hence for $C< \infty$ (not depending on $y$),
$\Exp [ W'_{t+1} - W'_t \mid \F_t ] \leq C$, a.s., so the maximal inequality (\ref{maxineq})
implies that,  for any $y <x$,
\[  \Pr \left[ \max_{0 \leq r \leq s} W'_{t+r} \geq (x-y)^\alpha \mid \F_t \right] \leq \frac{C s +W'_t}{(x-y)^{\alpha}}, \as\]
In particular, on $\{ X_t \leq y \}$, $W'_t = 0$ and so
\[ \Pr \left[ \max_{0 \leq r \leq s} X_{t+r} \geq x \mid \F_t  \right]
\leq \Pr \left[ \max_{0 \leq r \leq s} W'_{t+r} \geq (x-y)^\alpha \mid \F_t  \right] \leq \frac{C s}{(x-y)^{\alpha}}, \as\]
Setting $s = (x-y)^\alpha /(2C)$ in the last display, we obtain
that for some $\eps >0$ (not depending on $x$ or $y$), on $\{ t < \tau_x \} \cap \{ X_t \leq y \}$, for any $y < x$,
\begin{equation}
\label{eq4}
 \Pr  [ \tau_x \geq \eps (x-y)^\alpha   \mid \F_t  ] \geq 1/2, \as \end{equation}
Since $X_0 = 0$ and $x>0$, we have that   $\{ \Delta_0^- >  y^- \}$
implies $\{ \tau_x > 1\}$ and $\{ X_1 \leq y \}$. So applying (\ref{eq4}) at $t=1$ we have that
\begin{align*}
 \Pr [ \tau_x \geq \eps (x-y)^\alpha   ]
  \geq \Exp \left[ \1 \{ \Delta_0^- > y^- \}  \Pr  [ \tau_x \geq \eps (x-y)^\alpha   \mid \F_1 ]   \right]
  \geq  \frac{1}{2} \Pr [ \Delta_0^- > y^-  ] .\end{align*}
Taking $y = - \eps^{-1/\alpha} z^{1/\alpha} < 0$, we have that for any $z >0$,
\[ \Pr [ \tau_x \geq  z   ] \geq \Pr [ \tau_x \geq \eps ( x - y)^\alpha ] \geq  \frac{1}{2} \Pr [ \Delta_0^- > \eps^{-1/\alpha} z^{1/\alpha}  ] .\]
Hence for any $\gamma >0$,
\[ \Exp [ \tau_x^\gamma   ] = \int_0^\infty \Pr [ \tau_x > z^{1/\gamma} ] \ud z
 \geq \frac{1}{2} \int_0^\infty \Pr [ \Delta_0^- > \eps^{-1/ \alpha  }  z^{1/(\alpha \gamma)} ] \ud z .\]
 Using the substitution $w = \eps^{-\gamma} z$ we obtain
 \[ \Exp [ \tau_x^\gamma   ] \geq \frac{1}{2} \eps^\gamma \int_0^\infty \Pr [ \Delta_0^- > w^{1/(\alpha \gamma)} ] \ud w
= \frac{1}{2} \eps^\gamma \Exp [ (\Delta_0^-)^{\alpha \gamma} ] ,\]
which is infinite provided $\alpha \gamma \geq \beta$, i.e., $\gamma \geq \beta/\alpha$.
\end{proof}

The final two proofs for this section concern our results on last exit times.

 \begin{proof}[Proof of Theorem \ref{last1}.]
 Recall the definition of $\tau_x$ and $\lambda_x$ from (\ref{tau}) and (\ref{lastexit}) respectively.
 Fix $x \in \R$ and let $y >x$, to be specified later.
For this proof, define the stopping time
 $\eta_{y,x} := \min \{ t \geq \tau_y : X_t \leq x \}$, the  time of reaching $(-\infty, x]$ after
 having first reached $[y, \infty)$.
 To prove our result on finiteness of moments for $\lambda_x$, we prove an upper tail bound for $\lambda_x$. For $y > x$,
 $\{ \tau_y \leq t \} \cap \{ \eta_{y,x} = \infty \}$ implies $\{ \lambda_x \leq t\}$, so
 \begin{equation}
 \label{eq7}
 \Pr [ \lambda_x > t ] \leq \Pr [ \eta_{y,x} < \infty ] + \Pr [ \tau_y > t ] .\end{equation}
  We   obtain an upper bound for $\Pr [ \eta_{y,x} < \infty ]$. Under the conditions of the theorem, Lemma \ref{super} applies. It follows that for
 $\delta \in (0,\beta-\alpha)$, on $\{ \tau_y < \infty\}$,
 $(f_{x-A, \delta} (X_{t \wedge \eta_{y,x} } ) )_{t \geq \tau_y}$
 is a nonnegative supermartingale adapted to $(\F_t)_{t \geq \tau_y}$, and hence
 converges a.s.\ as $t \to \infty$ to a limit, $L_{y,x}$, say. Then, on $\{ \tau_y < \infty \}$, by Fatou's lemma,
 \begin{align*}  f_{x-A,\delta} (X_{\tau_y} )  & \geq \Exp [ L_{y,x} \mid \F_{\tau_y} ]
 \geq \Exp [ f_{x-A,\delta} (X_{\eta_{y,x}} ) \1 \{ \eta_{y,x} < \infty \} \mid \F_{\tau_y} ] \\
 &
 \geq (1+A)^{-\delta} \Pr [ \eta_{y,x} < \infty \mid \F_{\tau_y} ] .\end{align*}
 By definition, on $\{ \tau_y < \infty\}$,
 $X_{\tau_y} \geq y$, so $f_{x-A,\delta} (X_{\tau_y}) \leq (1+ A +y-x)^{-\delta}$. Hence,
 \begin{equation}
 \label{eq59}
  \Pr [ \eta_{y,x } < \infty ] = \Exp [ \Pr [ \eta_{y,x} < \infty \mid \F_{\tau_y} ] \1 \{ \tau_y < \infty \} ]
 = O ( y^{-\delta} ) .\end{equation}

  For the final term in (\ref{eq7}),  for $y >0$, $\Pr [ \tau_y > t ] = \Pr \left[ \max_{0 \leq s \leq t} X_s < y \right]$, where
\begin{align}
\label{eq60a}
\Pr \left[ \max_{0 \leq s \leq t} X_s < y \right] & \leq
\Pr \left[ \max_{0 \leq s \leq t} X_s \leq y, \min_{0 \leq s \leq t} X_s \geq - y \right]
+ \Pr \left[   \min_{0 \leq s \leq t} X_s \leq - y \right] \nonumber\\
& \leq \Pr \left[ \max_{0 \leq s \leq t-1} \Delta_s^+ \leq 2 y \right] + \Pr \left[   \min_{0 \leq s \leq t} X_s \leq - y \right] .
\end{align}
We choose $y = t^{(1/\alpha) - \eps}$, for $\eps \in (0,1/\alpha)$. Then we have from (\ref{maxtail}) that for $c' >0$,
\begin{equation}
\label{60b} \Pr \left[ \max_{0 \leq s \leq t-1} \Delta_s^+ \leq 2 t^{(1/\alpha)-\eps} \right] = O ( \exp \{ - c' t^{\alpha \eps} \} ) .\end{equation}
On the other hand, the $\phi = (1/\alpha) - \eps$ case of Lemma \ref{lem66} implies that
\begin{equation}
\label{60c}
 \Pr \left[   \min_{0 \leq s \leq t} X_s \leq - t^{(1/\alpha) - \eps} \right] = O( t^{1-(\beta/\alpha) + \eps} ) .\end{equation}
 Using the bounds (\ref{60b}) and (\ref{60c}) in the $y = t^{(1/\alpha) - \eps}$ case of (\ref{eq60a}), we obtain
 \begin{equation}
 \label{eq61} \Pr \left[ \max_{0 \leq s \leq t} X_s < t^{(1/\alpha) - \eps} \right] = O( t^{1-(\beta/\alpha) + (\beta+1)\eps} ) .\end{equation}
Thus taking  $y = t^{(1/\alpha) - \eps}$ in (\ref{eq7}) and $\delta$ as close as we wish to $\beta - \alpha$, and combining (\ref{eq59}) with (\ref{eq61}),
we conclude that,
for any $\eps>0$,
$\Pr [ \lambda_x > t ] =  O( t^{1-(\beta/\alpha) + \eps} )$,
which yields the claimed moment bounds.
   \end{proof}

 \begin{proof}[Proof of Theorem \ref{last2}.]
 Fix $x \in \R$ and let $y >x$.
For this proof, define
 $\nu_{t,x} := \min \{ s \geq t : X_s \leq x \}$, the first time of reaching $(-\infty, x]$ after
 time $t$. Similarly, set $\tau_{t,y} := \min \{ s \geq t : X_s \geq y \}$.
  We have that, for $r >0$,
 \begin{equation}
 \label{eq8}
  \Pr [ \lambda_x > t] \geq \Exp \left[ \1 \{ X_t \leq r \} \Pr [ \nu_{t,x} < \infty \mid \F_t ] \right] .\end{equation}

 Under the conditions of the theorem, Lemma \ref{sub} applies. It follows that for
 $\delta > \beta-\alpha$,
 $(f_{x-A, \delta} (X_{s \wedge \nu_{t,x} \wedge \tau_{t,y} } ) )_{s \geq t}$
 is a nonnegative submartingale adapted to $(\F_s)_{s \geq t}$;
 moreover, it is uniformly bounded and so converges a.s.\ and in $L^1$, as $s \to \infty$,
 to the limit $f_{x-A, \delta} (X_{\nu_{t,x} \wedge \tau_{t,y} } )$,
 since $\nu_{t,x} \wedge \tau_{t,y} < \infty$ a.s., by (\ref{a0}), which is available since Proposition
 \ref{a0prop} applies under the conditions of the theorem.
 Hence, a.s.,
 \begin{align*} f_{x-A,\delta} (X_t)   \leq \Exp [f_{x-A, \delta} (X_{\nu_{t,x} \wedge \tau_{t,y} } ) \mid \F_t ]
 \leq \Pr [ \nu_{t,x} < \infty  \mid \F_t ]  + f_{x-A,\delta} (y) .
 \end{align*}
 Since $y$ was arbitrary, and $f_{x-A,\delta}(y) \to 0$ as $y \to \infty$,
 it follows that, a.s.,
 \[  \Pr [ \nu_{t,x} < \infty  \mid \F_t ] \geq f_{x-A,\delta} (X_t) \geq f_{x-A,\delta} (r) ,\]
 on $\{X_t \leq r \}$. Hence from (\ref{eq8}) we obtain for $r \geq x$,
 \begin{equation}
 \label{eq9}
  \Pr [ \lambda_x > t] \geq  f_{x-A,\delta} (r) \Pr [ X_t \leq r  ] \geq (1+A+r-x)^{-\delta} \Pr [ X_t \leq r  ]. \end{equation}

 It remains to obtain a lower bound for $\Pr [ X_t \leq r  ]$,
 for a suitable choice of $r$.
 Let $Y_t = \sum_{s=0}^{t-1} \Delta_s^+$. Following the argument for (\ref{conc1}),
 with $h(y) = y^\alpha$, $\alpha \in (0,1]$, we may apply Lemma \ref{maxlem}
 with $\nu = t$ (or \cite[Lemma 3.1]{mvw})
 to $Z_t = Y_t^\alpha$ to obtain
 \[ \Pr \left[ \max_{0 \leq s \leq t} Y_s^\alpha \geq x \right]  = \Pr [ Y_t \geq x^{1/\alpha} ] \leq C t x^{-1} ,\]
 for some $C< \infty$ and all $t \in \Z^+$, $x>0$,
 which implies that
 \[ \Pr \left[  X_t \leq ( 2 C t)^{1/\alpha} \right] \geq
 \Pr \left[  Y_t \leq ( 2 C t)^{1/\alpha} \right] \geq 1/2 , \]
 since $X_t \leq X_0 + Y_t = Y_t$.
 Thus taking $r = ( 2 C t)^{1/\alpha}$, we have $\Pr [ X_t \leq r ] \geq 1/2$,
 and with this choice of $r$ in (\ref{eq9}) we obtain $\Pr [ \lambda_x > t] \geq \eps t^{-\delta/\alpha}$,
 for some $\eps>0$ and all $t$ sufficiently large. Since $\delta > \beta-\alpha$ was
 arbitrary, the result follows.
  \end{proof}

\section{Proofs for Section \ref{sec:appl}}
\label{sec:proofs3}

\subsection{Overview}

In this section we first
prove our results from Section \ref{sec:disting}, from which the results on the
strip model given in Section \ref{sec:strips} will follow. For our results from Section \ref{sec:disting}
on the random walk $Y_t$ with a distinguished subset $\CC$ of the state-space, we use two related but
 different proof ideas.
We prove Theorem \ref{thm10} in Section \ref{sec:prf10} by an explicit use of the embedded process $X_t = Y_{\sigma_t}$,
which observes the process at successive visits to $\CC$. We give estimates on the tails of the increments of $X_t$
given our assumptions on the tails of the increments of $Y_t$, and then apply the one-dimensional results of
Section \ref{sec:results} to $X_t$; a small additional amount of work is then needed to recover the result
for $Y_t$ itself. In contrast, in Section \ref{sec:prf11} we give the proofs of Theorems \ref{thm11}
and \ref{thm12}, which work directly with the process $Y_t$, but again make repeated use of the results from Section
\ref{sec:results}, not only for analysing the random walk but for estimating the almost-sure growth rate of $\sigma_n$ as well.
Finally, in Section \ref{strip1}, we derive the results on the strip model of Section \ref{sec:strips}.

\subsection{Proofs of Theorems \ref{thm11} and \ref{thm12}}
\label{sec:prf11}

We recall some notation introduced in Section \ref{sec:disting}. The stochastic process $Y_t$ has state space $\SS$ and increments
$D_t = Y_{t+1} - Y_t$. The successive hitting times of $\CC \subset \SS$ are $\sigma_0 = 0 , \sigma_1, \sigma_2, \ldots$, and
$\nu_n = \sigma_{n+1} - \sigma_n$. We write $\G_n = \sigma ( Y_0, \ldots, Y_n)$.
To start this section we give some preparatory results on the hitting times $\sigma_n = \sum_{i=1}^{n-1} \nu_i$.

\begin{lm}
\label{times} Suppose that (C1) holds. \begin{itemize}
\item[(i)] Suppose that for some $\gamma >0$ and $C<\infty$, $\Exp [ \nu_n^\gamma \mid \G_{\sigma_n} ] \leq C$ a.s.\ for all $n$.
Then for any $\eps >0$, a.s., for all but finitely many $n$, $\sigma_n \leq n^{(1/(\gamma \wedge 1)) +\eps}$.
\item[(ii)] Suppose that for some $\gamma \in (0,1]$, $y_0 < \infty$, and $c >0$, for all $y \geq y_0$,
 $\Pr [ \nu_n \geq y \mid \G_{\sigma_n} ] \geq cy^{-\gamma}$ a.s.\ for all $n$.
Then for any $\eps >0$, a.s., for all but finitely many $n$, $\sigma_n \geq n^{(1/\gamma) -\eps}$.
\end{itemize}
\end{lm}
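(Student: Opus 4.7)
The plan is to reduce both parts to one-dimensional results from Section \ref{sec:results} by regarding $(\sigma_n)_{n \in \Z^+}$ as a stochastic process adapted to the filtration $\mathcal{H}_n := \G_{\sigma_n}$, with non-negative increments $\nu_n = \sigma_{n+1} - \sigma_n$ that are $\mathcal{H}_{n+1}$-measurable. Under (C1) the $\sigma_n$ are a.s.\ finite stopping times, so this embedding is standard, and the hypotheses in (i) and (ii) are already expressed as conditional tail/moment bounds given $\mathcal{H}_n$, exactly of the form required in Section \ref{sec:proofs}.

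For part (i), when $\gamma \in (0,1]$ I would apply Lemma \ref{upper} to the process $\sigma_n$ with $h(x) = x^\gamma$: this $h$ is increasing and concave on $[0,\infty)$, and the moment hypothesis gives $\Exp[h(\nu_n) \mid \mathcal{H}_n] \leq C$ a.s. Lemma \ref{upper} then yields that, for any $\eps' > 0$, a.s.\ for all but finitely many $n$,
\[
\sigma_n \leq \bigl( n (\log n)^{1+\eps'} \bigr)^{1/\gamma},
\]
which is bounded by $n^{(1/\gamma)+\eps}$ once $\eps' < \eps\gamma$. When $\gamma > 1$, Jensen's inequality gives $\Exp[\nu_n \mid \mathcal{H}_n] \leq C^{1/\gamma}$, and Lemma \ref{mvwlem} applied to $\sigma_n$ yields $\sigma_n \leq n(\log n)^{1+\eps'} \leq n^{1+\eps}$ eventually; since $\gamma \wedge 1 = 1$ in this regime, this is the required bound.

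For part (ii), I would apply Lemma \ref{lem7} directly to $(\sigma_n)$ with exponent $\alpha = \gamma$. The hypothesis $\Pr[\nu_n \geq y \mid \mathcal{H}_n] \geq cy^{-\gamma}$ for $y \geq y_0$ delivers (\ref{tail1}) (at the cost of replacing $c$ by $c2^{-\gamma}$ if one insists on the strict inequality, via $\Pr[\nu_n > y] \geq \Pr[\nu_n \geq 2y]$). Lemma \ref{lem7} therefore gives, a.s.\ for all but finitely many $n$,
\[
\max_{0 \leq i \leq n-1} \nu_i \geq n^{1/\gamma} (\log n)^{-(1/\gamma) - \eps'}.
\]
Because each $\nu_i \geq 0$, the sum $\sigma_n = \sum_{i=0}^{n-1} \nu_i$ dominates the left-hand maximum, and the claimed bound $\sigma_n \geq n^{(1/\gamma)-\eps}$ follows for any $\eps > \eps'$ and all sufficiently large $n$.

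The only non-trivial point is to confirm that the auxiliary lemmas from Section \ref{sec:proofs}, which are stated for an abstract adapted process with filtration $(\F_t)$, apply verbatim to $\sigma_n$ equipped with the embedded filtration $(\mathcal{H}_n)$; this is immediate from (C1) and the optional stopping formalism. I do not expect any real obstacle beyond this bookkeeping.
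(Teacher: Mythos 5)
Your proof is correct and follows essentially the same route as the paper: both reduce to the one-dimensional almost-sure growth bounds of Section \ref{sec:results} applied to the nonnegative-increment process $X_n = \sigma_n$ adapted to $\F_n = \G_{\sigma_n}$, with $\Delta_n^+ = \nu_n$ and $\Delta_n^- = 0$. The only difference is cosmetic: the paper first converts the moment hypothesis in (i) to a tail bound by Markov's inequality and then invokes Theorem~\ref{cor1} (for (ii), Theorem~\ref{speed}), whereas you go one level down and apply Lemmas~\ref{upper}, \ref{mvwlem}, and \ref{lem7} directly, which is equally valid and in fact saves an irrelevant logarithmic factor.
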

\begin{proof}
For part (i), Markov's inequality yields
$\Pr [ \nu_n \geq y \mid \G_{\sigma_n} ] = O ( y^{-\gamma} )$, uniformly in $n$ and $\omega$.
Now apply Theorem  \ref{cor1} with $X_t = \sigma_t$, $\Delta_t = \Delta_t^+ = \nu_t$, $\F_t = \G_{\sigma_t}$,
 $\theta = \gamma \wedge 1$, and $\phi=0$. For part (ii),
 apply Theorem  \ref{speed}  in a similar way, noting that $\Delta_t^- = 0$ a.s.\ since $\nu_t \geq 0$ a.s.
\end{proof}

Denote the number of visits to $\CC$ by time $t$ by
\begin{equation}
\label{number}
N(t)
 := \max \{ n \in \Z^+ : \sigma_n \leq t \}.
\end{equation}
An inversion of Lemma \ref{times} yields the following result.

\begin{lm}
\label{visits}
Suppose that (C1) holds.
\begin{itemize}
\item[(i)] Suppose that for some $\gamma >0$ and $C<\infty$, $\Exp [ \nu_n^\gamma \mid \G_{\sigma_n} ] \leq C$ a.s.\ for all $n$.
Then for any $\eps >0$, a.s., for all but finitely many $t$, $N(t) \geq t^{(\gamma \wedge 1)-\eps}$.
\item[(ii)] Suppose that for some $\gamma \in (0,1]$, $y_0 < \infty$, and $c >0$, for all $y \geq y_0$,
 $\Pr [ \nu_n \geq y \mid \G_{\sigma_n} ] \geq cy^{-\gamma}$ a.s.\ for all $n$.
Then for any $\eps >0$, a.s., for all but finitely many $t$, $N(t) \leq t^{\gamma + \eps}$.
\end{itemize}
\end{lm}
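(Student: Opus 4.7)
The plan is to exploit the deterministic identity $\{N(t)\ge n\}=\{\sigma_n\le t\}$, which holds by the definition (\ref{number}) together with the strict monotonicity $\sigma_n<\sigma_{n+1}$ (a consequence of $\nu_n\ge 1$). In this way each of the two parts reduces to a direct inversion of the corresponding bound in Lemma \ref{times}, with a little care about exponents.

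For part (i), fix $\eps>0$ and introduce an auxiliary $\eps_1>0$ to be chosen later. Lemma \ref{times}(i) yields an a.s.\ finite (possibly random) $n_0$ such that $\sigma_n\le n^{(1/(\gamma\wedge 1))+\eps_1}$ for all $n\ge n_0$. Set $n(t):=\lceil t^{(\gamma\wedge 1)-\eps}\rceil$, which tends to $\infty$ with $t$, so that $n(t)\ge n_0$ for all $t$ sufficiently large. For such $t$,
\[
\sigma_{n(t)}\le n(t)^{(1/(\gamma\wedge 1))+\eps_1}\le (1+t^{(\gamma\wedge 1)-\eps})^{(1/(\gamma\wedge 1))+\eps_1},
\]
and the exponent in $t$ on the right equals $1-\eps/(\gamma\wedge 1)+\eps_1(\gamma\wedge 1)-\eps\eps_1$. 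Choosing $\eps_1$ small enough (depending on $\eps$) this exponent is strictly less than $1$, so $\sigma_{n(t)}\le t$ for all $t$ large enough, whence $N(t)\ge n(t)\ge t^{(\gamma\wedge 1)-\eps}$.

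For part (ii), fix $\eps>0$ and again pick an auxiliary $\eps_1>0$. Lemma \ref{times}(ii) supplies an a.s.\ bound $\sigma_n\ge n^{(1/\gamma)-\eps_1}$ valid for all sufficiently large $n$. Set $n(t):=\lfloor t^{\gamma+\eps}\rfloor+1$; then for $t$ large enough
\[
\sigma_{n(t)}\ge n(t)^{(1/\gamma)-\eps_1}\ge t^{(\gamma+\eps)((1/\gamma)-\eps_1)}=t^{1+\eps/\gamma-\eps_1\gamma-\eps\eps_1},
\]
and for small enough $\eps_1$ the exponent exceeds $1$, so $\sigma_{n(t)}>t$. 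This forces $N(t)<n(t)$, which gives $N(t)\le t^{\gamma+\eps}$ (the additive $+1$ and the ceiling can be absorbed by enlarging $\eps$ slightly). No genuine obstacle is anticipated: the only point requiring mild care is that the exceptional set on which the bound from Lemma \ref{times} may fail is a set of \emph{indices $n$}, not of \emph{times $t$}, but since $n(t)\to\infty$ with $t$ this translates without issue to an a.s.\ statement for all but finitely many $t$.
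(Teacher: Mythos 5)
Your proof is correct and follows essentially the same route as the paper: both invert the almost-sure bounds of Lemma \ref{times}, using $N(t)\to\infty$ a.s.\ to transfer the ``all but finitely many $n$'' statement to ``all but finitely many $t$.'' The paper substitutes $n=N(t)+1$ (resp.\ $n=N(t)$) directly into the Lemma \ref{times} bounds and solves for $N(t)$, whereas you instead pick a target $n(t)$ and verify $\sigma_{n(t)}\le t$ (resp.\ $>t$); the two formulations are interchangeable and your exponent bookkeeping with the auxiliary $\eps_1$ is correct.
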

\begin{proof} Since $\sigma_n < \infty$ a.s., we have $N(t) \to \infty$ as $t \to \infty$, a.s. Also note that, by definition
of $N(t)$, $\sigma_{N(t)} \leq t$ but $\sigma_{N(t)+1} > t$. Thus  under the conditions of part (i) we have that for any $\eps>0$, a.s.,
for all but finitely many $t$,
\[ t < \sigma_{N(t) +1 } \leq ( N(t) + 1 )^{\frac{1}{\gamma \wedge 1} + \eps} ,\]
by Lemma \ref{times}(i), which yields part (i). On the other hand, under the conditions of part (ii), for any $\eps>0$, a.s.,
for all but finitely many $t$,
\[ t \geq  \sigma_{N(t)} \geq  N(t)^{ (1/\gamma)- \eps} ,\]
by Lemma \ref{times}(ii), which yields part (ii).
 \end{proof}

Now we can prove Theorems \ref{thm11} and \ref{thm12},
starting  with the former.

\begin{proof}[Proof of Theorem \ref{thm11}.] By (C3), for any $\eps>0$, there exists $y_0 < \infty$ such that, a.s., for all $y \geq y_0$,
\begin{equation}
\label{nutail}
y^{-\gamma - \eps} \leq \Pr [ \nu_n \geq y \mid \G_{\sigma_n} ] \leq y^{-\gamma + \eps} ,\end{equation}
uniformly in $n$. The
upper bound in (\ref{nutail}) in turn implies that, for $p >0$, for any $\eps > 0$,
\begin{equation}
\label{nu2}
\Exp [ \nu_n^{p} \mid \G_{\sigma_n} ] = \int_0^\infty \Pr [ \nu_n > y^{1/p} \mid \G_{\sigma_n} ] \ud y \leq y_0 + \int_{y_0}^\infty y^{-(\gamma-\eps)/p} \ud y , \as, \end{equation}
which is bounded uniformly in $n$ and $\omega$ provided $p < \gamma -\eps$.
  First we prove the lower bound for $Y_t$. Recall the definition of $N(t)$ from (\ref{number}),
  and that $\sigma_m \to \infty$ as $m \to \infty$.
  Since $Y_t = Y_0 + \sum_{s=0}^{t-1} D_s$, we observe that
\begin{equation}
\label{eq50}
 Y_t \geq Y_0 + \sum_{m=0}^{N(t-1)} D^+_{\sigma_m} - \sum_{s=0}^{t} D_s^- .\end{equation}
We have from (\ref{bigjump}) that, for any $\eps >0$, there exists $x_0 < \infty$ such that, a.s., for all $x \geq x_0$,
\begin{equation}
\label{bigjump2}
x^{-\alpha - \eps} \leq \Pr [ D_{\sigma_n}^+ \geq x \mid \G_{\sigma_n} ] \leq x^{-\alpha + \eps} ,\end{equation}
uniformly in $n$.
An application of Theorem \ref{speed} with $X_t = \sum_{m=0}^{t-1} D^+_{\sigma_m}$ and $\F_t = \G_{\sigma_t}$ (noting that, since $\sigma_{t-1} +1 \leq \sigma_t$,
$X_t$ is then $\F_t$-measurable, and $\Delta_t = X_{t+1} - X_t = D^+_{\sigma_t}$),
using the lower bound in (\ref{bigjump2}), then
implies that for any $\eps >0$, a.s., for all but finitely many $t$, $\sum_{m=1}^t D^+_{\sigma_m}   \geq t^{(1/\alpha) - \eps}$.
Together with Lemma \ref{visits}(i) and (\ref{nu2}), this implies that for any $\eps>0$, a.s., for all but finitely many $t$,
\begin{equation}
\label{eq51} \sum_{m=0}^{N(t)} D^+_{\sigma_m}   \geq t^{(\gamma/\alpha) - \eps} .\end{equation}
On the other hand, condition (i) in Theorem \ref{thm11} with
Markov's inequality implies that $\Pr [ D_t^- \geq x \mid \G_t ] \leq C x^{-\beta}$, uniformly in $t$ and $\omega$.
Then an application of Theorem \ref{cor1} with $X_t = \sum_{s=0}^{t-1} D_s^-$ (so that $\Delta_t = D_t^-$), $\F_t = \G_{t}$, $\theta = \beta \wedge 1$ and $\phi = 0$
implies that for any $\eps >0$, a.s., for all but finitely many $t$,
\begin{equation}
\label{eq52}
 \sum_{s=0}^{t} D_s^- \leq {t}^{\frac{1}{\beta \wedge 1} + \eps} .\end{equation}
Thus from (\ref{eq50}) with (\ref{eq51}) and (\ref{eq52}), and the fact that $\alpha < \gamma (\beta \wedge 1)$,
we obtain, for any $\eps>0$, a.s., for all but finitely many $t$,
 $Y_{t} \geq t^{(\gamma/\alpha)-\eps}$. Since $\eps>0$ was arbitrary,
 \[ \liminf_{t \to \infty} \frac{ \log Y_t}{\log t} \geq \frac{\gamma}{\alpha} , \as \]

 Now we prove the upper bound for $Y_t$. Observe that
 \begin{equation}
\label{eq53}
 Y_t \leq Y_0 + \sum_{m=0}^{N(t)} D^+_{\sigma_m} + \sum_{s=0}^{t} D_s^+ \1 \{ Y_s \notin \CC \} . \end{equation}
 Here we have from Lemma \ref{visits}(ii) and the lower bound in (\ref{nutail}) that, for any $\eps>0$, a.s.,
for all but finitely many $t$, $N(t) \leq t^{\gamma+\eps}$. Moreover,
an application of Theorem \ref{cor1}(i) with $X_t = \sum_{m=0}^{t-1} D^+_{\sigma_m}$ (so $\Delta_t = D^+_{\sigma_t}$),
$\F_t = \G_{\sigma_t}$, $\theta = \alpha-\eps$ and $\phi=0$,
using the upper bound in (\ref{bigjump2}),
implies that for any $\eps >0$, a.s., for all but finitely many $t$, $\sum_{m=0}^t D^+_{\sigma_m}   \leq t^{(1/\alpha) + \eps}$.
Hence for any $\eps>0$, a.s., for all but finitely many $t$,
\begin{equation}
\label{eq54}
\sum_{m=0}^{N(t)} D^+_{\sigma_m} \leq t^{(\gamma/\alpha) + \eps} .
\end{equation}
Another application of Theorem \ref{cor1}, this time with $X_t = \sum_{s=0}^{t-1} D_s^+ \1 \{ Y_s \notin \CC \}$ (so $\Delta_t = D_t^+ \1 \{ Y_t \notin \CC \}$),
$\F_t = \G_{t}$, $\theta = \beta \wedge 1$ and $\phi=0$,
using condition (iii) in Theorem \ref{thm11},
implies that for any $\eps >0$, a.s., for all but finitely many $t$,
\begin{equation}
\label{eq55}
\sum_{s=0}^{t} D_s^+ \1 \{ Y_s \notin \CC \}  \leq t^{(1/(\beta \wedge 1))+\eps} .\end{equation}
Then from (\ref{eq53}) with (\ref{eq54}) and (\ref{eq55}), using the fact that $\alpha < \gamma (\beta \wedge 1)$, we obtain, for any $\eps>0$, a.s., for all but finitely many $t$,
 $Y_{t} \leq t^{(\gamma/\alpha)+\eps}$. Since $\eps>0$ was arbitrary,
 \[ \limsup_{t \to \infty} \frac{ \log Y_t}{\log t} \leq \frac{\gamma}{\alpha} , \as \]
 Combining this with the $\liminf$ result obtained above completes the proof.
  \end{proof}

We finish this section with the proof of Theorem \ref{thm12}.

\begin{proof}[Proof of Theorem \ref{thm12}.]
Parts of this proof are   similar to the proof of Theorem \ref{thm11} above, so  we omit some details this time around.
Again, (\ref{nutail}) holds.
 Observe that
\begin{equation}
\label{eq56}
 Y_t \geq Y_0 - \sum_{m=0}^{N(t)} D^-_{\sigma_m} - \sum_{s=0}^{t} D_s^- \1 \{ Y_s \notin \CC \} .\end{equation}
Similarly to the argument for (\ref{eq54}) above,
 from Lemma \ref{visits}(i) and Theorem \ref{cor1},
using condition (i) in Theorem \ref{thm12}, we have that,
for any $\eps>0$, a.s., for all but finitely many $t$,
$\sum_{m=0}^{N(t)} D^-_{\sigma_m} \leq t^{(\gamma/\alpha)+\eps}$.
Also, similarly to the argument for (\ref{eq55}) above, we have from Theorem \ref{cor1} with condition (ii) in Theorem \ref{thm12} that, for any $\eps>0$, a.s.,
for all but finitely many $t$, $\sum_{s=0}^{t} D_s^- \1 \{ Y_s \notin \CC \} \leq t^{(1/\beta) + \eps}$.
Since $\alpha > \gamma \beta$  it follows from (\ref{eq56}) that
for any $\eps>0$, a.s., for all but finitely many $t$,
$Y_t \geq - t^{(1/\beta)+\eps}$.

Next we prove the upper bound for $Y_t$. Observe that
 \begin{equation}
\label{eq57a}
 Y_t \leq Y_0 + \sum_{m=0}^{N(t)} D^+_{\sigma_m} + \sum_{s=0}^{t} D_s^+ \1 \{ Y_s \notin \CC \} - \sum_{s=0}^{t-1} D_s^- \1 \{ Y_s \notin \CC \} . \end{equation}
 Similarly to the analogous term in (\ref{eq56}),  for any $\eps>0$, a.s., for all but finitely many $t$,
  $\sum_{m=0}^{N(t)} D^+_{\sigma_m} \leq t^{(\gamma/\alpha)+\eps}$. Yet another application of Theorem \ref{cor1}, using condition (iii) in  Theorem \ref{thm12},
  yields, for any $\eps>0$, a.s., for all but finitely many $t$,
   $\sum_{s=0}^{t} D_s^+ \1 \{ Y_s \notin \CC \} \leq t^{\frac{1}{(\beta+\delta)\wedge 1} +\eps}$. Since $\alpha > \beta \gamma$, $\beta <1$, and $\delta >0$,
   we may choose $\eps>0$ small enough so that both of these upper bounds are $o_\omega ( t^{(1/\beta) - \eps} )$. So, by (\ref{eq57a}), to complete the proof, it remains
   to show that, for any $\eps>0$, a.s., for all but finitely many $t$,
   \begin{equation}
   \label{eq58a}
   \sum_{s=0}^{t-1} D_s^- \1 \{ Y_s \notin \CC \} \geq t^{(1/\beta) -\eps} .\end{equation}
   Let $\kappa_1, \kappa_2, \ldots$ be the successive (stopping) times at which $Y_t \notin \CC$, and let $M(t) = \max \{ m : \kappa_m \leq t\}$.
  Since $\gamma \in (0,1)$, we have from Lemma \ref{visits}(ii) that $N(t) = o_\omega (t)$, a.s., so $M(t) > t/2$ a.s., for all $t$ sufficiently large.
  Then $\sum_{s=0}^{t-1} D_s^- \1 \{ Y_s \notin \CC \} \geq \sum_{m=1}^{M(t-1)} D^-_{\kappa_m}$.
  For this latter sum,
  Theorem \ref{speed} with condition (ii) in   Theorem \ref{thm12}
  shows that, for any $\eps>0$, a.s., for all but finitely many $t$,  $X_t = \sum_{m=1}^{t-1} D^-_{\kappa_m} \geq t^{(1/\beta) -\eps}$.
  Then the claim (\ref{eq58a}) follows, using the a.s.\ lower bound on $M(t)$.
  \end{proof}

\subsection{Proof of Theorem \ref{thm10}}
\label{sec:prf10}

For this section we take $X_{t} = Y_{\sigma_t}$ and $\F_t = \G_{\sigma_t}$.
Thus $X_t$ is the {\em embedded process} obtained
be observing $Y_t$ at those instants at which it is in the
distinguished class $\CC$; $X_t$ is an $(\F_t)$-adapted process
on the state space $\CC$.
As before, we
 write $D_t := Y_{t+1} - Y_t$ and $\Delta_t := X_{t+1} - X_t$ for the increments
of $Y_t$ and $X_t$, respectively.
The next two results derive properties of the increments $\Delta_t$ of the embedded process
$X_t$ from conditions on the increments $D_t$ of the original process $Y_t$. First we have an upper tail bound.

\begin{lm}
\label{up1}
Suppose that (C1) and (C2) hold.
Suppose that for some $C < \infty$ and some $\beta > 0$,
$\Exp [ (D_t^+)^\beta \mid \G_t ] \leq C$ a.s.\ for all $t$.
Then there exists $C' < \infty$ such that for all $x >0$ and all $t$,
 $\Pr [ \Delta_t^+ \geq x \mid \F_t ] \leq C' x^{-(\beta \wedge 1)}$ a.s.
\end{lm}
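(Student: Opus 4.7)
The plan is to exploit the representation $\Delta_t = X_{t+1} - X_t = Y_{\sigma_{t+1}} - Y_{\sigma_t} = \sum_{k=0}^{\nu_t - 1} D_{\sigma_t + k}$, which gives the pointwise bound $\Delta_t^+ \leq \sum_{k=0}^{\nu_t - 1} D_{\sigma_t + k}^+$. Everything will then follow from Markov's inequality applied at the right power, combined with (C2) to control the random length of the excursion.

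First I would split into the two cases $\beta \leq 1$ and $\beta > 1$, corresponding respectively to $\beta \wedge 1 = \beta$ and $\beta \wedge 1 = 1$. In the former case, subadditivity of $u \mapsto u^\beta$ on $[0,\infty)$ gives $(\Delta_t^+)^\beta \leq \sum_{k=0}^{\nu_t-1} (D_{\sigma_t+k}^+)^\beta$, and Markov's inequality at power $\beta$ will reduce the problem to showing
\[ \Exp\!\left[ \sum_{k=0}^{\nu_t - 1} (D_{\sigma_t + k}^+)^\beta \,\Big|\, \G_{\sigma_t} \right] \leq BC, \quad \textrm{a.s.} \]
In the latter case, Jensen's inequality gives $\Exp[D_t^+ \mid \G_t] \leq C^{1/\beta}$, and I would apply Markov at power $1$ after showing the analogous bound on $\Exp[\Delta_t^+ \mid \F_t]$.

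The common key step is to interchange the sum (over the random number of excursion steps) with the conditional expectation. I would write
\[ \Exp\!\left[ \sum_{k=0}^{\nu_t - 1} (D_{\sigma_t + k}^+)^\beta \,\Big|\, \G_{\sigma_t} \right]
= \Exp\!\left[ \sum_{k=0}^{\infty} \1\{ k < \nu_t \} (D_{\sigma_t + k}^+)^\beta \,\Big|\, \G_{\sigma_t} \right] , \]
and observe that $\{k < \nu_t\} = \{Y_{\sigma_t + 1}, \ldots, Y_{\sigma_t + k} \notin \CC\}$ is $\G_{\sigma_t + k}$-measurable. Conditioning first on $\G_{\sigma_t + k}$ and using the hypothesis $\Exp[(D_{\sigma_t+k}^+)^\beta \mid \G_{\sigma_t+k}] \leq C$ inside, then summing and applying Fubini/tower, gives an upper bound of $C \cdot \Exp[\nu_t \mid \G_{\sigma_t}]$, which is at most $BC$ by (C2). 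The analogous argument with $\beta$ replaced by $1$ handles the $\beta>1$ case.

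The only mildly delicate point is justifying the identity $\sigma_{t+1} = \sigma_t + \nu_t$ together with the $\G_{\sigma_t+k}$-measurability of $\{k < \nu_t\}$; this is a direct consequence of the definition of the $\sigma_n$ as successive hitting times of $\CC$ under (C1), so I do not anticipate a real obstacle. Combining the two cases produces the claimed bound with $C' = BC \vee B C^{1/\beta}$.
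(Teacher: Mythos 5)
Your proof is correct. It rests on the same two structural ideas as the paper's: reduce the exponent to $\beta\wedge 1$ using concavity/subadditivity of $u\mapsto u^{\beta\wedge 1}$, and then control the cumulative positive increment over a single excursion by combining the drift bound $C$ with the expected excursion length $B$ from (C2), finishing with Markov's inequality.

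The implementation of the middle step, however, is genuinely different. The paper introduces the partial-sum process $Z_s=\sum_{r=\sigma_t}^{\sigma_t+s-1}D_r^+$, shows $\Exp[Z_{s+1}^{\beta\wedge 1}-Z_s^{\beta\wedge 1}\mid\G_{\sigma_t+s}]\le C$, and then invokes the maximal inequality Lemma~\ref{maxlem} (a Doob-type bound on $\max_{0\le s\le\nu_t}Z_s^{\beta\wedge 1}$) to get the tail estimate, even though only the endpoint $Z_{\nu_t}\ge\Delta_t^+$ is used. You instead bound $\Exp[(\Delta_t^+)^{\beta\wedge 1}\mid\F_t]$ directly by a Wald-type calculation: write the excursion sum as $\sum_{k\ge 0}\1\{k<\nu_t\}(D_{\sigma_t+k}^+)^{\beta\wedge 1}$, note that $\{k<\nu_t\}$ is $\G_{\sigma_t+k}$-measurable, and apply the tower property termwise to get the bound $C\,\Exp[\nu_t\mid\G_{\sigma_t}]\le BC$ (with the obvious modification via Jensen when $\beta>1$). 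Your route is more elementary --- it avoids the compensator-plus-Doob machinery inside Lemma~\ref{maxlem} --- while the paper's has the advantage of reusing a maximal inequality already proved and used elsewhere; both rely on the same implicit fact that the hypothesis $\Exp[(D_s^+)^\beta\mid\G_s]\le C$ transfers to the stopping time $\sigma_t+k$.
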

\begin{proof}
For the duration of this proof, let $Z_s := \sum_{r=\sigma_t}^{\sigma_t+s-1} D_r^+$ for $s \geq 0$, so that $Z_0 = 0$,
$Z_s \geq 0$ and $Y_{\sigma_t +s} \leq Z_s + Y_{\sigma_t}$.
Then for any $s \geq 0$,
by concavity,
\begin{align*}
\Exp [ Z_{s+1}^{\beta \wedge 1} - Z_{s}^{\beta \wedge 1} \mid \G_{\sigma_t+s} ]
\leq \Exp [ (D_{\sigma_t+s}^+ )^{\beta \wedge 1} \mid \G_{\sigma_t+s} ] \leq C , \as \end{align*}
Hence, by Lemma \ref{maxlem}, for $x>0$,
\begin{equation}
\label{eq70}
 \Pr \left[ \max_{0 \leq s \leq \nu_t} Z_s^{\beta \wedge 1} \geq x \mid \F_t \right] \leq C x^{-1} \Exp [ \nu_t \mid \F_t ]
\leq B C x^{-1}, \as,\end{equation}
for all $t$, since, by (C2),  $\Exp [ \nu_t \mid \F_t ] \leq B$.
In particular, since $\Delta_t = Y_{\sigma_t + \nu_t} - Y_{\sigma_t} \leq Z_{\nu_t}$, (\ref{eq70}) implies that
$\Pr [ (\Delta_t^+)^{\beta \wedge 1} \geq x \mid \F_t ] = O (x^{-1})$, uniformly in $t$ and $\omega$.
\end{proof}

Next we prove the following lower tail bound.

\begin{lm}
\label{low1}
Suppose that (C1) and (C2) hold.
Suppose that for some $c>0$, $\alpha >0$, and $x_0 < \infty$,
for all $x \geq x_0$ and all $t$, $\Pr [ D_t^+ \geq x \mid \G_t ] \geq c x^{-\alpha}$ a.s.\ on $\{Y_t \in \CC\}$.
Suppose also that
there exist $C < \infty$ and  $\beta > 0$ with $\alpha < \beta \wedge 1$ such that
$\Exp [ (D_t^-)^\beta \mid \G_t ] \leq C$ a.s.\ for all $t$.
Then there exist $c' >0$ and $x_1 < \infty$ for which $\Pr [ \Delta_t^+ \geq x \mid \F_t ] \geq c' x^{-\alpha}$ a.s.\
for all $x \geq x_1$ and all $t$.
\end{lm}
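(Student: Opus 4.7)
The plan is to produce the required lower tail estimate by exhibiting a single favourable scenario: at time $\sigma_t$ (when, by construction, $Y_{\sigma_t}\in\CC$) the walk makes one big jump, of size at least $2x$, and thereafter during the excursion $[\sigma_t+1,\sigma_t+\nu_t-1]$ the cumulative downward motion does not exceed $x$. On the intersection of these two events,
\[
 \Delta_t \;=\; D_{\sigma_t} + \sum_{r=\sigma_t+1}^{\sigma_t+\nu_t-1} D_r \;\geq\; 2x - \sum_{r=\sigma_t+1}^{\sigma_t+\nu_t-1} D_r^- \;\geq\; x,
\]
and the elementary bound $\Pr[A\cap B\mid\F_t]\geq\Pr[A\mid\F_t]-\Pr[B^c\mid\F_t]$ will reduce the problem to two one-sided estimates.

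For the first, since $Y_{\sigma_t}\in\CC$ holds identically, the hypothesis directly gives $\Pr[D_{\sigma_t}\geq 2x\mid\F_t]\geq c(2x)^{-\alpha}$ a.s. for $x\geq x_0/2$. For the second, set $\tilde Z_s:=\sum_{r=\sigma_t+1}^{\sigma_t+s}D_r^-$ with $\tilde Z_0:=0$, and work with $(\tilde Z_s^{\beta\wedge 1})_{s\geq 0}$, which is adapted to the filtration $\tilde\F_s:=\G_{\sigma_t+s+1}$ (relative to which $\nu_t-1$ is a stopping time). By concavity of $y\mapsto y^{\beta\wedge 1}$ and Jensen's inequality applied to the moment bound $\Exp[(D_t^-)^\beta\mid\G_t]\leq C$,
\[
\Exp[\tilde Z_{s+1}^{\beta\wedge 1}-\tilde Z_s^{\beta\wedge 1}\mid\tilde\F_s]
\;\leq\;\Exp[(D_{\sigma_t+s+1}^-)^{\beta\wedge 1}\mid\G_{\sigma_t+s+1}]\;\leq\;C',
\]
a.s., for some $C'<\infty$ independent of $t$. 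Applying Lemma \ref{maxlem} conditionally on $\tilde\F_0=\G_{\sigma_t+1}$ with the stopping time $\nu_t-1$, and then taking $\F_t$-conditional expectation (where the tower property together with (C2) yields $\Exp[\nu_t-1\mid\F_t]\leq B$), I obtain
\[
\Pr\!\left[\tilde Z_{\nu_t-1}>x\mid\F_t\right]
\;\leq\;\Pr\!\left[\max_{0\leq s\leq \nu_t-1}\tilde Z_s^{\beta\wedge 1}\geq x^{\beta\wedge 1}\mid\F_t\right]
\;\leq\;\frac{C'B}{x^{\beta\wedge 1}},\as
\]

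Putting the two estimates together gives, a.s.,
\[
 \Pr[\Delta_t^+\geq x\mid\F_t]\;\geq\;c\,2^{-\alpha}x^{-\alpha}-C'Bx^{-(\beta\wedge 1)}.
\]
Since $\alpha<\beta\wedge 1$ by hypothesis, the first (negative power $\alpha$) term dominates for all $x$ larger than some $x_1<\infty$, and one reads off the desired bound $\Pr[\Delta_t^+\geq x\mid\F_t]\geq c' x^{-\alpha}$ with, say, $c'=c\,2^{-\alpha-1}$.

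The main obstacle I anticipate is the filtration bookkeeping around the maximal inequality: condition (C2) controls $\Exp[\nu_t\mid\F_t]$ at the level of $\F_t=\G_{\sigma_t}$, whereas the natural filtration for the post-jump process $\tilde Z$ starts at $\G_{\sigma_t+1}$. The two-step conditioning outlined above — first apply Lemma \ref{maxlem} conditionally on $\G_{\sigma_t+1}$, then average over $\F_t$ — is what makes this bookkeeping work cleanly; otherwise one has no pointwise control over $\Exp[\nu_t\mid\G_{\sigma_t+1}]$.
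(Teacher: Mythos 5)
Your argument is correct and is essentially the paper's proof: a single big jump $D_{\sigma_t}^+\geq 2x$ at the start of the excursion combined with a maximal-inequality tail bound (the sign-reversed version of (\ref{eq70})) on the cumulative downward motion during the excursion. The filtration ``obstacle'' you flag at the end is self-inflicted: if you instead take $Z_s=\sum_{r=\sigma_t}^{\sigma_t+s-1}D_r^-$ (as the paper does, which costs nothing since $D_{\sigma_t}^-\geq 0$), then $Z_s$ is adapted to $(\G_{\sigma_t+s})_{s\geq 0}$ and $\nu_t$ is a stopping time for that filtration, so Lemma \ref{maxlem} applies conditionally on $\G_{\sigma_t}=\F_t$ in a single step, with (C2) supplying $\Exp[\nu_t\mid\F_t]\leq B$ directly and no tower-property detour needed.
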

\begin{proof}
 Recall that $\Delta_t = Y_{\sigma_{t+1}} - Y_{\sigma_t} = \sum_{s=0}^{\nu_t -1} D_{\sigma_t+s}$.
Then $\Delta_t^+ \geq D_{\sigma_t}^+ - \sum_{r=\sigma_t}^{\sigma_t+\nu_t-1} D_r^-$, so
 \begin{align*} \Pr [ \Delta_t^+ \geq x \mid \F_t ] & \geq \Pr [ D_{\sigma_t}^+ \geq 2 x \mid \G_{\sigma_t} ] - \Pr \left[ \sum_{r=\sigma_t}^{\sigma_t+\nu_t-1} D_r^- \geq x \mid \G_{\sigma_t} \right] \\
 & =  \Pr [ D_{\sigma_t}^+ \geq 2 x \mid \G_{\sigma_t} ] - O ( x^{-(\beta \wedge 1)} ) ,\end{align*}
 by the argument for (\ref{eq70}) but with a change of sign.
\end{proof}

  Recall the definition of $N(t)$ from (\ref{number}).

\begin{proof}[Proof of Theorem \ref{thm10}.]
Condition (ii) of the theorem implies that for any $\eps>0$, there exists $x_0 < \infty$ for which,
\begin{equation}
\label{16a}
x^{-\alpha-\eps} \leq \Pr [ D_t^+ > x \mid \G_t ] \leq x^{-\alpha+\eps} , \as, \end{equation}
for all $x \geq x_0$ and all $t$.
Using the lower bound in (\ref{16a}) and (C2),
Lemma \ref{low1} implies that, for any $\eps>0$, there exists $x_1 < \infty$ such that
 $\Pr [ \Delta_t^+ \geq x \mid \F_t ] \geq x^{-\alpha-\eps}$, a.s.,
for all $x \geq x_1$ and all $t$. On the other hand, Lemma \ref{up1} with (C2) and the upper bound in (\ref{16a}) (which
shows that, for $\eps>0$, $\Exp [ ( D_t^+)^{\alpha - \eps} \mid \G_t]$ is bounded uniformly in $t$ and $\omega$)
implies that, for any $\eps >0$,
$\Pr [ \Delta_t^+ \geq x \mid \F_t ] = O (x^{-\alpha +\eps})$ a.s., uniformly in $t$ and $\omega$. Moreover, another application of Lemma \ref{up1},
now using condition (i) of the theorem
as well as (C2), yields
$\Pr [ \Delta_t^- \geq x \mid \F_t ] = O (x^{-(\beta \wedge 1)})$ a.s., uniformly in $t$ and $\omega$,
so that, for $\eps>0$, $\Exp [ ( \Delta_t^-)^{(\beta \wedge 1) -\eps} \mid \F_t]$ is bounded uniformly in $t$ and $\omega$.
With these tail and moment bounds, since $\alpha < \beta \wedge 1$, we obtain from Theorem \ref{cor1} that for any $\eps>0$, $X_t = O_\omega (t^{(1/\alpha)+\eps})$, a.s.,
and we obtain from Theorem \ref{speed} that for any $\eps>0$, a.s., for all but finitely many $t$,
 $X_t \geq  t^{(1/\alpha) -\eps}$. Thus, since $\eps>0$ was arbitrary,
 \begin{equation}
 \label{eq57}
  \lim_{t \to \infty} \frac{ \log X_t}{\log t} = \frac{1}{\alpha}, \as \end{equation}
 We need to show that the same limit holds for $Y_t$ instead of $X_t$. Note that $\sigma_{N(t)} \leq t < \sigma_{N(t) +1}$.
 If $t = \sigma_{N(t)}$, we have
 \begin{equation}
 \label{eq58}
 Y_t \1 \{ Y_t \in \CC \} = X_{N(t)} = (N(t))^{(1/\alpha)+o_\omega (1)}, \as ,\end{equation}
 by (\ref{eq57}).
On the other hand, for $\sigma_{N(t)} < t < \sigma_{N(t) +1}$ we have the estimate
 \[ | Y_t - Y_{\sigma_{N(t)+1}} | \1 \{ Y_t \notin \CC \} \leq \sum_{s=\sigma_{N(t)} +1}^{\sigma_{N(t)+1} - 1} | D_s |
 = \sum_{s = \sigma_{N(t)}}^{\sigma_{N(t)+1}-1} | D_s | \1 \{ Y_s \notin \CC \} .\]
 It follows that
 \begin{align*} \max_{0 \leq s \leq t} \left( | Y_s - X_{N(s)+1} | \1 \{ Y_s \notin \CC \} \right)
 & \leq \max_{ 0 \leq n \leq N(t)} \max_{ \sigma_n \leq s < \sigma_{n+1}} | Y_s - Y_{\sigma_{N(s)+1}} |   \1 \{ Y_s \notin \CC \} \\
 & \leq  \sum_{s=0}^{\sigma_{N(t)+1}} | D_s | \1 \{ Y_s \notin \CC \} \leq \sum_{s=0}^{\sigma_{t+1}} | D_s | \1 \{ Y_s \notin \CC \},\end{align*}
using the trivial bound $N(t) \leq t$ for the final inequality. By conditions (i) and (iii) of the theorem,
 $\Exp [ | D_t |^\beta \mid \G_t] \leq C$ on $\{ Y_t \notin \CC \}$, a.s.,
so   Theorem \ref{cor1} yields, for any $\eps>0$,
\[ \sum_{s=0}^{\sigma_{t}} | D_s | \1 \{ Y_s \notin \CC \} \leq ( \sigma_t )^{\frac{1}{\beta \wedge 1} +\eps} ,\]
where, for any $\eps>0$, $\sigma_t = O_\omega ( t^{1 +\eps} )$, by Lemma \ref{times}(i). Thus   we obtain, for any $\eps>0$,
\[ \max_{0 \leq s \leq t} \left( | Y_s - X_{N(s)+1} | \1 \{ Y_s \notin \CC \} \right) = O_\omega ( t^{\frac{1}{\beta \wedge 1} +\eps} ) ,\]
which is $o_\omega (t^{(1/\alpha)-\eps})$ for small enough $\eps$, since $\alpha < \beta \wedge 1$. Hence
\begin{equation}
\label{eq59a}
Y_t \1 \{ Y_t \notin \CC \} = X_{N(t)+1} + o_\omega (t^{(1/\alpha)-\eps}) = (N (t)+1)^{(1/\alpha) + o_\omega (1) }+ o_\omega (t^{(1/\alpha)-\eps}), \as,
\end{equation}
by (\ref{eq57}).
 The result of the theorem now follows from (\ref{eq58}) and (\ref{eq59a}) provided we can show that $N(t) = t^{1+o_\omega(1)}$, a.s.
 The upper bound here is trivial since $N(t) \leq t$, and the lower bound follows from Lemma \ref{visits}(i) with (C2). This completes the proof.
\end{proof}

\subsection{Proofs for heavy-tailed random walks on strips}
\label{strip1}

The model of Section \ref{sec:disting}  generalizes the strip model as follows. Set $Y_t = V_t + \frac{1}{2 + U_t}$.
Then $(U_t, V_t)$ can be recovered from $Y_t$ via $V_t = \lfloor Y_t \rfloor$ and $U_t = ( Y_t - \lfloor Y_t \rfloor)^{-1} - 2$.
In this case, the state-space $\SS$ of $Y_t$ is a subset of the rationals $\mathbb{Q}$; the distinguished
subset $\CC$ corresponds to $U_t = 0$, i.e., $\CC = \frac{1}{2} + \Z$, a translate of $\Z$. The increments of $Y_t$
have the same tail behaviour as the increments of $V_t$.

Thus Theorems \ref{stripthm1}, \ref{stripthm2a}, and \ref{stripthm2b} follow immediately
from Theorems \ref{thm10}, \ref{thm11}, and \ref{thm12}, respectively.
It remains to prove Proposition \ref{lamperti}.

\begin{proof}[Proof of Proposition \ref{lamperti}.]
Proposition 1 of \cite[p.\ 957]{aim} implies that $\Exp [ \nu^\gamma ] < \infty$, which implies the upper bound in (\ref{nu1}) by Markov's inequality.
On the other hand, for the lower tail bound we appeal to a result of \cite{ai1}. For $p >0$, Taylor's formula implies that
\begin{align*}
&~~~ \Exp [ U_{t+1}^p - U_t^p \mid U_t = x ] \\
& = p x^{p-1} \left(   \Exp[ U_{t+1} - U_t \mid U_t =x] + \frac{p-1}{2x} \Exp[ (U_{t+1} - U_t)^2 \mid U_t = x] + O ( x^{-2} ) \right) , \end{align*}
using  the uniform bound on $U_{t+1} - U_t$ for the error term. By our assumptions on the moments of $U_{t+1} - U_t$, we have
\[  \Exp [ U_{t+1}^p - U_t^p \mid U_t = x ] = p x^{p-2} \sigma^2 \left( \left( \frac{1}{2} - \gamma \right) + \frac{p-1}{2} + o(1) \right) \geq 0 ,\]
for all $x$ sufficiently large,
provided $p > 2\gamma$. So Corollary 1 of \cite[p.\ 119]{ai1}
implies that for any $\eps >0$, $\Pr[ \nu \geq t ] \geq   t^{-\gamma - \eps}$, for all $t$ sufficiently large.
\end{proof}

 \section{Appendix}
 \label{appendix}

In this appendix we make some additional remarks concerning the nature of our conditions
(\ref{alpha1}), (\ref{betacon}), and (\ref{tail1}), and   how they relate to the formulation of the results of
Erickson \cite{eric1} and Kesten and Maller \cite{km1} on sums of i.i.d.\ random variables.

For any nonnegative random variable $Z$ with
distribution function $F(z) := \Pr [ Z \leq z]$,
\begin{align}
\label{eq77}
\Exp [ Z \1 \{ Z \leq z \}   ] & = \int_0^z y \ud F (y) \nonumber\\
& = \int_0^\infty \Pr [ Z \1 \{ Z \leq z \}  > y   ] \ud y  = \int_0^z \Pr [ y < Z \leq z   ] \ud y \nonumber\\
& = \int_0^z \Pr [ Z > y   ] \ud y - z \Pr [ Z > z  ]  .\end{align}

 Our condition (\ref{alpha1}) concerns $\Exp [ \Delta_t^+ \1 \{  \Delta_t^+ \leq x \} \mid \F_t  ]$;
 conditions in \cite{eric1,km1} are stated in terms of the analogue in the i.i.d.\ case of  $\int_0^x \Pr [ \Delta_t^+ > y \mid \F_t ] \ud y$, which is denoted
 $m_+(x)$ by Erickson \cite[p.\ 372]{eric1} and $A_+(y)$ by Kesten and Maller \cite[p.\ 3]{km1}.
It follows from (\ref{eq77}) that, for $x>0$, $\int_0^x \Pr [ \Delta_t^+ > y \mid \F_t ] \ud y \geq \Exp [ \Delta^+_t \1 \{ \Delta_t^+ \leq x \} \mid \F_t]$,
so (\ref{alpha1}) implies that $\int_0^x \Pr [ \Delta_t^+ > y \mid \F_t ] \ud y  \geq c  x^{1-\alpha}$ a.s.\ for $x$ sufficiently large.

On the other hand, (\ref{betacon}) together with Markov's inequality implies that
$\int_0^x \Pr [ \Delta_t^- \geq y \mid  \F_t ] \ud y = O (x^{1-\beta})$; here  $\int_0^x \Pr [ \Delta_t^- \geq y \mid  \F_t ] \ud y$ is the analogue in our more general setting
of Erickson's $m_-(x)$ \cite[p.\ 372]{eric1} and  Kesten and Maller's $A_-(x)$ \cite[p.\ 3]{km1}.

We state one result on the relationship between conditions (\ref{alpha1}) and (\ref{tail1}),
using  the concept of slow variation (see e.g.\ \cite[pp.\ 354--356]{loeve1}).

\begin{lm}
\label{regvar}
Suppose that for some $\alpha \in (0,1)$ the nonnegative random variable $Z$ satisfies $\Pr [ Z > z ] = z^{-\alpha} L(z)$
for some slowly varying function $L$. Then $\Exp [ Z \1 \{ Z \leq z \} ] \sim \frac{\alpha}{1-\alpha} z^{1-\alpha} L (z)$
as $z \to \infty$.
\end{lm}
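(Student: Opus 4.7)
The plan is to start from the integration-by-parts identity (\ref{eq77}), which gives
\[ \Exp [ Z \1 \{ Z \leq z \} ] = \int_0^z \Pr [ Z > y ] \ud y - z \Pr [ Z > z ] . \]
Since we are given $\Pr [ Z > z ] = z^{-\alpha} L(z)$, the subtracted term is exactly $z^{1-\alpha} L(z)$, and the whole problem reduces to showing that as $z \to \infty$,
\[ \int_0^z \Pr[Z > y] \ud y \sim \frac{1}{1-\alpha} z^{1-\alpha} L(z) ;\]
combining the two and simplifying $\frac{1}{1-\alpha} - 1 = \frac{\alpha}{1-\alpha}$ then yields the claim.

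To establish this asymptotic for the integral, I would invoke Karamata's theorem on integrals of regularly varying functions: for a function $y \mapsto y^{-\alpha} L(y)$ with $L$ slowly varying and index $-\alpha > -1$ (here we crucially use $\alpha \in (0,1)$), one has $\int_A^z y^{-\alpha} L(y) \ud y \sim (1-\alpha)^{-1} z^{1-\alpha} L(z)$ as $z \to \infty$ (see e.g.\ Lo\`eve \cite[\S A.a]{loeve1}, or Bingham--Goldie--Teugels). Splitting
\[ \int_0^z \Pr[Z > y] \ud y = \int_0^A \Pr[Z > y] \ud y + \int_A^z \Pr[Z > y] \ud y \]
for a fixed $A$ large enough so that $\Pr[Z > y] = y^{-\alpha} L(y)$ on $[A, \infty)$, the first integral is bounded by $A$ and so is a negligible $O(1)$ term compared with $z^{1-\alpha} L(z) \to \infty$, while the second is handled by Karamata.

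There is no real obstacle here; the only point to watch is the requirement $\alpha < 1$ for Karamata's theorem to apply in the integrable-at-infinity-versus-growing regime, which is exactly the assumption of the lemma. If instead the identity $\Pr[Z > z] = z^{-\alpha} L(z)$ is assumed only asymptotically as $z \to \infty$ (rather than pointwise for all $z$), the same splitting argument works verbatim, because any difference between the actual tail and its asymptotic form on a bounded interval $[0, A]$ contributes only $O(1)$. Thus the full proof is essentially a one-line application of Karamata's theorem together with (\ref{eq77}).
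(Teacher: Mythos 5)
Your proposal is correct and takes essentially the same route as the paper: apply the identity (\ref{eq77}) to reduce the problem to the integral $\int_0^z \Pr[Z>y]\,\ud y$, evaluate that integral asymptotically via Karamata's theorem, and subtract $z\Pr[Z>z]=z^{1-\alpha}L(z)$ to obtain the coefficient $\frac{1}{1-\alpha}-1=\frac{\alpha}{1-\alpha}$. The extra remarks you make (splitting off a bounded interval, handling a merely asymptotic tail) are sound but not needed in the paper's version, which assumes the exact identity.
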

\begin{proof}
 Karamata's theorem (see e.g.\ \cite[p.\ 356]{loeve1}) implies that
\[ \int_0^z \Pr [ Z > y   ] \ud y  = \int_0^z y^{-\alpha} L(y) \ud y \sim \frac{1}{1-\alpha} z^{1-\alpha} L(z) ,\]
as $z \to \infty$. The result follows from (\ref{eq77}).
\end{proof}

\section*{Acknowledgements}

We thank the anonymous referees for their comments and suggestions. We are especially grateful for the careful and thorough attention of
one referee, who pointed out several inaccuracies and obscurities in the previous version of the paper.

\end{document}